\newtheorem{theorem}{Theorem}[section]
\newtheorem*{claim}{Claim}
\newtheorem{lemma}[theorem]{Lemma}
\newtheorem{conjecture}[theorem]{Conjecture}
\newcommand{\bF}{\mathbb F}
\newcommand{\cB}{\mathcal{B}}
\newcommand{\cI}{\mathcal{I}}
\newcommand{\del}{\setminus}
\newcommand{\con}{/}
\DeclareMathOperator{\loops}{loops}
\DeclareMathOperator{\cl}{cl}
\begin{document}

\sloppy

\title[Quasi-graphic matroids]{Quasi-graphic matroids}

\author{Jim Geelen}
\address{Department of Combinatorics and Optimization,
University of Waterloo, Waterloo, Canada} 
\thanks{This research was partially supported by grants from the
Office of Naval Research [N00014-10-1-0851], NSERC [203110-2011],
and the Marsden Fund of New Zealand.}

\author{Bert Gerards}
\address{Centrum Wiskunde \& Informatica,
Amsterdam, The Netherlands}

\author{Geoff Whittle}
\address{School of Mathematics, Statistics and Operations Research,
Victoria University of Wellington, New Zealand}

\subjclass{05B35}
\keywords{matroids, representation, graphic matroids, frame matroids}
\date{\today}

\begin{abstract}
Frame matroids and lifted-graphic matroids are two
interesting generalizations of graphic matroids.
Here we introduce a new generalization, {\em quasi-graphic matroids},
that unifies these two existing classes. Unlike frame matroids
and lifted-graphic matroids, it is easy to certify that
a $3$-connected matroid is quasi-graphic. The main result
is that every $3$-connected representable quasi-graphic matroid is
either a lifted-graphic matroid or a frame matroid.
\end{abstract}

\maketitle

\section{Introduction}
Let $G$ be a graph and let $M$ be a matroid.
For a vertex $v$ of $G$ we let $\loops_G(v)$
denote the set of loop-edges of $G$ at the vertex $v$.
We say that $G$ is a {\em framework} for $M$ if
\begin{itemize}
\item[(1)] $E(G)=E(M)$,
\item[(2)] $r_M(E(H))\le |V(H)|$ for each component $H$ of $G$, 
\item[(3)] for each vertex $v$ of $G$ we have
$\cl_M(E(G-v))\subseteq E(G-v)\cup\loops_G(v)$, and
\item[(4)] for each circuit $C$ of $M$, the subgraph
$G[C]$ has at most two components.
\end{itemize}

An earlier version of this paper had a serious flaw that was
pointed out to us by Daryl Funk.  In order to overcome that
issue we added condition $(4)$ to the definition.
 
This definition is motivated by the 
following theorem that follows from a result
due to Seymour~[\ref{seymour}].
\begin{theorem}\label{graphic}
Let $G$ be a graph with $c$ components and let $M$ be a matroid.
Then $M$ is the cycle matroid of $G$ if and only if
$G$ is a framework for $M$ and $r(M) \le |V(G)|-c$.
\end{theorem}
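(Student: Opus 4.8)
The plan is to prove the two implications separately; the forward implication is a routine verification and the reverse one carries essentially all of the difficulty. Suppose first that $M=M(G)$. Condition (1) is immediate. For (2), each component $H$ of a graphic matroid satisfies $r_M(E(H))=|V(H)|-1\le|V(H)|$, and summing over the $c$ components gives $r(M)=|V(G)|-c$, which also establishes the rank bound, with equality. For (3), observe that every edge outside $E(G-v)$ is incident to $v$; a non-loop such edge $e=vu$ joins the isolated vertex $v$ of the subgraph $(V(G),E(G-v))$ to another component, so $r_{M(G)}(E(G-v)\cup\{e\})>r_{M(G)}(E(G-v))$ and hence $e\notin\cl_M(E(G-v))$. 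Thus $\cl_M(E(G-v))\subseteq E(G-v)\cup\loops_G(v)$. Finally (4) holds because each circuit of $M(G)$ is a single cycle of $G$, whose subgraph $G[C]$ is connected.

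For the reverse implication, suppose $G$ is a framework for $M$ with $r(M)\le|V(G)|-c$. First I would use (3) to show that every forest of $G$ is independent in $M$. Order $V(G)=\{v_1,\dots,v_N\}$ so that each $v_i$ that is not the first vertex of its component is joined by a forest edge $e_i$ to an earlier vertex of the same component, and let $F=\{e_i\}$ be the resulting spanning forest. For each such $i$ the edges $e_j$ with $j<i$ have both ends in $\{v_1,\dots,v_{i-1}\}$, so they lie in $E(G-v_i)$; since $e_i$ is a non-loop edge at $v_i$, monotonicity of closure and condition (3) give $e_i\notin\cl_M(E(G-v_i))\supseteq\cl_M(\{e_j:j<i\})$, so $e_i\notin\cl_M(\{e_j:j<i\})$. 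Hence $F$ is independent, so $r(M)\ge|V(G)|-c$ and therefore $r(M)=|V(G)|-c$, every spanning forest of $G$ is a basis of $M$, and $r_M(A)\ge r_{M(G)}(A)$ for every $A\subseteq E(G)$. The same argument applied within each component gives $r_M(E(H))\ge|V(H)|-1$; since $r_M$ is subadditive and $\sum_H(|V(H)|-1)=r(M)$, equality holds for each $H$ and the restrictions $M|E(H)$ form a direct-sum decomposition of $M$. This reduces the problem to the case in which $G$ is connected.

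It remains to prove the reverse inequality $r_M(A)\le r_{M(G)}(A)$ for all $A$, equivalently that every cycle of $G$ is dependent in $M$; then, fixing a spanning tree $T$ as a basis, the fundamental $M$-circuit of each non-tree edge $f$ (which is dependent, hence not a forest, hence contains the unique cycle of $T\cup\{f\}$) coincides with that cycle, and $M=M(G)$ follows. This is the main obstacle. The difficulty is that conditions (1)--(3) are \emph{local} and by themselves yield only the lower bound on rank established above; one needs the single \emph{global} hypothesis $r(M)=|V(G)|-c$ to force $M$ to be as dependent as $M(G)$ everywhere. Condition (4) restricts each $M$-circuit $C$ so that $G[C]$ has at most two components, and it is the rank equality that should force this number to be one and $C$ to be a single cycle; soft arguments using only (1)--(4) and the rank bound do not seem to close this gap directly, as witnessed by the self-dual nature of the situation (condition (3) governs the vertex-stars, i.e.\ the cocircuit side, exactly symmetrically to how forests govern the circuit side).

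To overcome this obstacle I would appeal to Seymour's recognition theorem [\ref{seymour}]. Concretely, I would verify that conditions (2) and (3) together with the exact rank $r(M)=|V(G)|-c$ translate precisely into the hypotheses under which that result identifies the cocircuits of $M$ with the bonds of $G$, equivalently the circuits of $M$ with the cycles of $G$. Combined with the forest-independence established in the previous step, this yields $r_M=r_{M(G)}$ on all of $E(G)$, and hence $M=M(G)$, completing the reverse implication. \QED
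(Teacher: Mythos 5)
Your forward direction and your forest-independence argument are both correct (the latter is essentially the paper's Lemma~\ref{forest}, proved by the same leaf-peeling use of condition (3)). The genuine gap is exactly where you locate it: you never prove that every cycle of $G$ is dependent in $M$. Deferring this to Seymour's theorem is not a proof as written --- you do not state Seymour's result, do not identify its hypotheses, and the ``verification'' that conditions (2) and (3) plus the rank equality match those hypotheses, which is the entire mathematical content of that route, is left as a promissory note. Moreover, your assertion that soft arguments from (1)--(4) and the rank bound ``do not seem to close this gap'' is incorrect: the paper closes it in a few lines, and using only the weak-framework conditions (1)--(3) (condition (4) is never needed for this implication). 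The missing idea is that condition (3) yields not just independence of forests but \emph{exact} rank drops: if $v$ has degree one with non-loop edge $e$, then $e\notin\cl_M(E(G-v))$, so deleting $v$ lowers the rank of the edge set by exactly one (Lemma~\ref{deletevertex}). This lets the global corank deficiency propagate to every subgraph: given a non-empty subgraph $H$ of the connected graph $G$, extend $H$ to a spanning connected subgraph $H^+$ by adding one vertex and one edge at a time; then $|V(H^+)|-r_M(E(H^+))\ge |V(G)|-r(M)$, and peeling the added vertices back off one at a time (each has degree one at the moment of removal, so each removal decreases both the vertex count and the rank by exactly one) preserves this inequality, giving $|V(H)|-r_M(E(H))\ge |V(G)|-r(M)$ (Lemma~\ref{subgraph}). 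Applied to a cycle $C$ in the connected case, where $|V(G)|-r(M)\ge 1$, this gives $r_M(E(C))\le |V(C)|-1=|E(C)|-1$, so $E(C)$ is dependent --- done, with no appeal to [\ref{seymour}].

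Two smaller problems, both symptoms of the same missing lemma. First, your reduction to the connected case is invalid: subadditivity gives $r(M)\le\sum_H r_M(E(H))$, which points the wrong way, and it is perfectly consistent with your inequalities that some component has $r_M(E(H))=|V(H)|$. The correct argument again uses the exact rank drops from condition (3): peel the vertices of the \emph{other} components away one by one to obtain the upper bound $r_M(E(H))\le |V(H)|-1$. Second, your concluding step is broken: a matroid is not determined by one basis together with its fundamental circuits. For example, $U_{2,4}$ and the rank-$2$ matroid on $\{a,b,c,d\}$ in which $c$ and $d$ are parallel share the basis $\{a,b\}$ and the fundamental circuits $\{a,b,c\}$ and $\{a,b,d\}$, yet differ on $\{c,d\}$. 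The correct finish is direct and needs no fundamental circuits: once every forest of $G$ is independent in $M$ and every cycle of $G$ is dependent in $M$, any non-forest contains a cycle and is therefore dependent, so $M$ and $M(G)$ have the same independent sets, i.e.\ $M=M(G)$.
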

 
We will call a matroid {\em quasi-graphic}
if it has a framework. Next we will consider two 
classes of quasi-graphic matroids; namely ``lifted-graphic
matroids" and ``frame matroids".

We say that a matroid $M$ is a {\em lifted-graphic matroid} 
if there is a matroid $M'$ and an element $e\in E(M')$ such that
$M'\del e = M$ and $M'\con e$ is graphic. 
The following result is proved in Section~\ref{sec:lifts}.
\begin{theorem}\label{lifts}
Every lifted-graphic matroid is quasi-graphic.
\end{theorem}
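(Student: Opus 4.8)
The plan is to show that the graph $G_0$ whose cycle matroid is $M'\con e$ is itself a framework for $M$. First I would dispose of the degenerate case in which $e$ is a loop of $M'$: there $M=M'\del e=M'\con e$ is graphic, so $G_0$ is a framework by Theorem~\ref{graphic}. So assume $e$ is not a loop. Then for every $S\subseteq E(M)$ we have $r_{M'\con e}(S)=r_{M'}(S\cup e)-1$, whence $0\le r_{M'}(S)-r_{M'\con e}(S)\le 1$, and also $r_M(S)=r_{M'}(S)$ since deletion does not change the rank of subsets avoiding $e$. These two facts are the main tool, and conditions (1)--(3) will follow almost mechanically from them.

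Condition (1) is immediate, since $E(G_0)=E(M'\con e)=E(M)$. For condition (2), for a component $H$ of $G_0$ I would compute $r_M(E(H))=r_{M'}(E(H))\le r_{M'\con e}(E(H))+1=|V(H)|$. For condition (3), I would fix a vertex $v$, set $S=E(G_0-v)$, and note that the only elements not in $S\cup\loops_{G_0}(v)$ are the non-loop edges $f$ at $v$; since $v$ is not a vertex of $S$, no such $f$ lies in the graphic closure of $S$, i.e. $r_{M'\con e}(S\cup f)=r_{M'\con e}(S)+1$, and lifting through $e$ gives $f\notin\cl_{M'}(S\cup e)\supseteq\cl_{M'}(S)$, hence $f\notin\cl_M(S)$.

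The heart of the argument is condition (4). Let $C$ be a circuit of $M$, hence a circuit of $M'$ with $e\notin C$, and let $H_1,\dots,H_k$ be the components of $G_0[C]$, with edge sets $C_1,\dots,C_k$. Writing $\delta(S):=r_{M'}(S)-r_{M'\con e}(S)\in\{0,1\}$, I would first record that $\delta(S)=1$ if and only if $e\in\cl_{M'}(S)$, and that the total graphic nullity of $G_0[C]$ equals $1+\delta(C)\le 2$. When $k\ge 2$ each $C_i$ is a proper subset of the circuit, hence independent in $M'$, so $\delta(C_i)$ equals the cycle rank of $H_i$, and $\sum_i\delta(C_i)=1+\delta(C)$. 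If $\delta(C)=0$ then some $H_j$ is unicyclic, forcing $e\in\cl_{M'}(C_j)\subseteq\cl_{M'}(C)$ and contradicting $\delta(C)=0$, so $k=1$. If $\delta(C)=1$ and $k\ge 3$, then exactly two components are unicyclic and at least one, say $H_t$, is a tree; computing $r_{M'\con e}(C\setminus C_t)$ two ways---via $e\in\cl_{M'}(C\setminus C_t)$, giving $|C|-|C_t|-1$, and via additivity of graphic rank over the vertex-disjoint pieces, giving $|C|-|C_t|-2$---yields a contradiction. Hence $k\le 2$ in all cases.

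The main obstacle is precisely this verification of condition (4), the recently added one: conditions (1)--(3) fall out of the rank inequality $0\le r_{M'}-r_{M'\con e}\le 1$, whereas (4) needs the more delicate component count above, exploiting both that a circuit is minimally dependent (so proper sub-pieces are independent) and that its graphic nullity is at most two.
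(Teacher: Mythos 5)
Your proof is correct, and its overall skeleton coincides with the paper's: both verify that the graph $G$ with $M'\con e=M(G)$ is itself a framework for $M$, and for conditions (1)--(3) the arguments are essentially identical (the inequality $r_M(S)=r_{M'}(S)\le r_{M'\con e}(S)+1$ gives (2), and the closure computation $\cl_M(E(G-v))\subseteq\cl_{M'}(E(G-v)\cup\{e\})\setminus\{e\}=\cl_{M'\con e}(E(G-v))$ gives (3)). The genuine divergence is condition (4). The paper (Theorem~\ref{lifts2}) never analyses the components of $G[C]$ directly; instead it proves the stronger statement that the union of \emph{any} two vertex-disjoint cycles $C_1,C_2$ of $G$ is dependent in $M$, since $r_M(E(C_1\cup C_2))\le r_{M(G)}(E(C_1\cup C_2))+1=|E(C_1\cup C_2)|-1$, and then deduces (4) from the classification in Lemma~\ref{circuit}: a circuit whose graph had three or more components would be a disjoint union of cycles, and two of those cycles would form a proper --- hence independent --- subset of the circuit whose union is nevertheless dependent, a contradiction. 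Your nullity-counting argument with $\delta(S)=r_{M'}(S)-r_{M'\con e}(S)$ is self-contained: it avoids Lemma~\ref{circuit} and all of the weak-framework structure theory, at the cost of a longer case analysis and an explicit split on whether $e$ is a loop of $M'$ (the paper's uniform rank formulation needs no such split). What the paper's route buys is more than brevity: the ``disjoint cycles are dependent'' statement is precisely the hypothesis of Lemma~\ref{lifts3}, and it is reused verbatim to prove Zaslavsky's characterization $M=LM(G,\cB)$ in Theorem~\ref{zas6}; your argument establishes (4) but not that reusable stronger fact.
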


A {\em framed matroid} is a pair $(M,V)$ such that
$M$ is a matroid, $V$ is a basis of $M$, and
each element of $M$ is spanned by a subset
of $V$ with at most two elements.
A matroid $M$ is a {\em frame matroid} if there 
is a framed matroid $(M',V)$ such that $M=M'\del V$.
The following result is proved in Section~\ref{sec:frames}.
\begin{theorem}\label{frame}
Every frame matroid is quasi-graphic.
\end{theorem}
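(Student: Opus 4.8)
The plan is to build a framework $G$ for $M$ directly from a framed matroid $(M',V)$ with $M = M'\setminus V$. I take $V$ itself as the vertex set of $G$ and turn each element $e$ of $M$ into an edge according to how it is spanned. Since $(M',V)$ is framed, $e$ lies in $\cl_{M'}(S)$ for some $S\subseteq V$ with $|S|\le 2$; let $S_e$ be a minimal such set. A short argument using the independence of $V$ shows $S_e$ is unique: if $e$ were spanned by two distinct singletons or pairs, then three or four distinct elements of $V$ would be dependent, contradicting that $V$ is a basis. I then make $e$ a loop at $v$ when $S_e=\{v\}$, an edge joining $u$ and $v$ when $S_e=\{u,v\}$, and a loop at an arbitrary vertex when $e$ is a loop of $M'$ (the case $S_e=\emptyset$), adding one auxiliary vertex to host such loops if $V=\emptyset$. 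Condition $(1)$ then holds by construction, and $E(G)=E(M)$ throughout.

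For condition $(2)$, each component $H$ of $G$ satisfies $E(H)\subseteq\cl_{M'}(V(H))$, since every edge of $H$ has its support among its endpoints in $V(H)$. As $V(H)\subseteq V$ is independent, $r_M(E(H))=r_{M'}(E(H))\le r_{M'}(V(H))=|V(H)|$. For condition $(3)$, fix a vertex $v$. Every edge of $G-v$ has support contained in $V\setminus\{v\}$, so $\cl_M(E(G-v))=\cl_{M'}(E(G-v))\setminus V\subseteq \cl_{M'}(V\setminus\{v\})\setminus V$. Now suppose $e$ is incident to $v$ but is not a loop at $v$, so $S_e=\{v,v'\}$ with $v'\ne v$; then $\{v,v',e\}$ is a circuit of $M'$, and if $e\in\cl_{M'}(V\setminus\{v\})$ this would give $v\in\cl_{M'}(\{v',e\})\subseteq\cl_{M'}(V\setminus\{v\})$, contradicting the independence of $V$. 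Hence no such $e$ lies in $\cl_M(E(G-v))$, which gives $\cl_M(E(G-v))\subseteq E(G-v)\cup\loops_G(v)$.

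The crux, and the step I expect to be the main obstacle, is condition $(4)$. Let $C$ be a circuit of $M$; since $C$ avoids $V$ it is also a circuit of $M'$. Write $C_1,\dots,C_k$ for the edge sets of the components of $G[C]$, with disjoint vertex sets $V_1,\dots,V_k\subseteq V$, so that $C_i\subseteq\cl_{M'}(V_i)$ for each $i$. The key fact is that disjoint subsets of the independent set $V$ span pairwise skew flats, so the rank of $M'$ is additive over the pieces: $r_{M'}(C)=\sum_i r_{M'}(C_i)$. Combining this with $r_{M'}(C)=|C|-1=\sum_i|C_i|-1$ yields $\sum_i(|C_i|-r_{M'}(C_i))=1$, so exactly one $C_i$ is dependent and the rest are independent. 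If $k\ge 2$, that dependent $C_i$ would be a proper nonempty subset of the circuit $C$, which is impossible; hence $G[C]$ is connected and $(4)$ holds with room to spare. The only delicate point is the additivity of rank over the $\cl_{M'}(V_i)$, which I would derive from the skewness of closures of disjoint subsets of an independent set; everything else is routine verification of the framework axioms.
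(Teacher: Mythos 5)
Your proof is correct, and its engine is the same as the paper's: both construct the graph on vertex set $V$ in which each element becomes an edge joining the (at most two) elements of $V$ that span it --- the paper's ``support graph'' --- both obtain condition (3) from the hyperplanes $\cl_{M'}(V\setminus\{v\})$, and both obtain condition (4) from additivity of rank over the components of $G[C]$. Where you genuinely differ is in the surrounding logic. The paper first reduces to the case where $M$ itself is framed, by invoking minor-closure of the quasi-graphic class (Lemma~\ref{minors}, a substantive result of its Section~4), and then reduces to $M$ simple by asserting (without proof) that quasi-graphicness is invariant under simplification; after these reductions every non-basis element lies on a unique circuit $\{u,v,e\}$ with $u,v\in V$, so the support graph is well-defined at once and Lemma~\ref{frame1} finishes quickly. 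You instead work directly with $M=M'\setminus V$, which obliges you to prove uniqueness of the minimal spanning set $S_e$ (your circuit-elimination argument for this is right) and to handle loops and parallel elements of $M'$ by hand, but in exchange the argument is entirely self-contained: it needs neither the minor-closure lemma nor the simplification claim. One statement to repair: ``pairwise skew flats, so the rank of $M'$ is additive over the pieces'' is not a valid inference in a general matroid --- the three rank-one flats of $U_{2,3}$ are pairwise skew, yet their union has rank $2$, not $3$. What is true, and what you need, is that $\cl_{M'}(V_1\cup\cdots\cup V_j)$ is skew to $\cl_{M'}(V_{j+1})$ for every $j$, because $V_1\cup\cdots\cup V_j$ and $V_{j+1}$ are disjoint subsets of the independent set $V$; additivity over all $k$ pieces then follows by induction, together with the standard fact that subsets of two skew flats have additive rank. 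Since the paper asserts the corresponding additivity in Lemma~\ref{frame1} with no more justification, this is a matter of polish rather than a gap.
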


Our main result is that for matroids that are both $3$-connected
and representable, there are no quasi-graphic matroids
other than those described above.
\begin{theorem}\label{main}
Let $M$ be a $3$-connected representable matroid.
If $M$ is quasi-graphic, then $M$ is either
a frame matroid or a lifted-graphic matroid.
\end{theorem}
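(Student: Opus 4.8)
The plan is to fix a framework $G$ for $M$, extract a biased graph from it, and then show that $M$ must coincide with either the frame matroid or the lift matroid of that biased graph. Since these are exactly a frame matroid and a lifted-graphic matroid (the lift matroid admits an extension by one element whose contraction is the cycle matroid of $G$), proving membership in this pair suffices. First I would normalize $G$: using that $M$ is $3$-connected, together with the rank condition~(2) and the closure condition~(3), I expect to reduce to the case that $G$ is connected, since a disconnected $G$ would expose a low-order separation of $M$. Define $\mathcal{B}$ to be the set of cycles of $G$ that are circuits of $M$, and set $\Omega=(G,\mathcal{B})$. If $\Omega$ is balanced, then every cycle of $G$ is a circuit of $M$; since no $M$-circuit can properly contain another circuit, the circuits of $M$ are exactly the cycles of $G$, so $M$ is graphic and hence both a frame matroid and a lifted-graphic matroid. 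So I may assume $\Omega$ is unbalanced. A short argument using condition~(4) and circuit elimination should show that $\Omega$ is a genuine biased graph, i.e.\ no theta subgraph contains exactly two members of $\mathcal{B}$.

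Next I would compare the circuits of $M$ with those of the frame matroid $F=F(\Omega)$ and the lift matroid $L=L(\Omega)$. Condition~(4) restricts every circuit of $M$ to induce at most two components in $G$, and matching these against the known circuit families of $F$ and of $L$ should show that each circuit of $M$ is a circuit of $F$ or a circuit of $L$. The two matroids agree on all balanced cycles and on all connected unbalanced configurations; their sole point of divergence is a pair of vertex-disjoint unbalanced cycles $C_1,C_2$. Here the rank bound~(2) pins $r_M(C_1\cup C_2)$ to one of two values: if $C_1\cup C_2$ is dependent it is forced to be a circuit, matching the lift circuit of $L$, whereas if it is independent the relevant circuit is the handcuff $C_1\cup P\cup C_2$ through a connecting path $P$, matching $F$. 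Thus a single local test, namely the independence of $C_1\cup C_2$, decides frame versus lift behaviour at each such pair.

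The heart of the proof is then to show that this test has a consistent answer across all of $\Omega$, so that $M$ equals $F$ globally or equals $L$ globally. I would first use $3$-connectivity to propagate the type between pairs that share a cycle or are linked by short paths, thereby reducing to a situation in which two disjoint pairs of unbalanced cycles give opposite answers. The main obstacle is eliminating such genuinely mixed configurations, and this is precisely where representability must enter: I expect that a mixed configuration yields a bounded-size minor that is representable over no field, or else forces a $2$-separation that contradicts $3$-connectivity, so that a matroid which is both $3$-connected and representable cannot mix the two behaviours. Carrying out this final step, by extracting an explicit non-representable (or connectivity-violating) minor from a pair of disjoint unbalanced cycles on which the test disagrees, is where I anticipate the real difficulty lies; everything before it is structural bookkeeping driven by conditions~(2)--(4).
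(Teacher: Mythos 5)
There is a genuine gap, and it sits exactly where you say you ``anticipate the real difficulty lies.'' Your proposal reduces Theorem~\ref{main} to the claim that the local frame/lift test (independence of $E(C_1\cup C_2)$ for disjoint unbalanced cycles $C_1,C_2$) has a globally consistent answer, and you then assert that a ``mixed'' configuration should yield a non-representable minor or a violation of $3$-connectivity. Nothing in the proposal substantiates this: no propagation lemma is proved, no candidate non-representable minor is identified, and no mechanism is given by which representability would forbid mixing. Since the V\'amos matroid is a $3$-connected quasi-graphic matroid exhibiting precisely such mixed behaviour, any argument here must use representability in an essential and quantitative way; saying that representability ``must enter'' at this point is a restatement of the theorem, not a proof of it. The structural bookkeeping you do carry out (connectedness of $G$, the theta-property for $\cB$, the circuit trichotomy, and the characterizations of $M=FM(G,\cB)$ and $M=LM(G,\cB)$ via connected circuits, respectively dependent disjoint cycle pairs) corresponds to Lemmas~\ref{conn3}, \ref{theta}, \ref{circuit}, \ref{frame2}, and~\ref{lifts3} of the paper and is fine, but it is the easy part.

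For comparison, the paper's proof never confronts mixed configurations at all; it uses the representation constructively from the outset. One first upgrades $G$ to a \emph{strong} framework, i.e.\ a connected framework with $r_M(E(G-v))=r(M)-1$ for every vertex $v$ (Lemma~\ref{strong}), so that each set $E(M)-\cl_M(E(G-v))$ is a cocircuit. Given a representation $M=M(A)$, row operations produce, for each vertex $v$, a row of the row space of $A$ whose support is exactly that vertex cocircuit; assembling these rows gives a matrix $B$, indexed by $V(G)$, whose row space lies in that of $A$ and in which every non-loop edge has exactly two nonzero entries. Then $M(B)$ is a frame matroid with framework $G$, and a rank dichotomy finishes the proof: either $r(M(B))=r(M(A))$, in which case $M=M(B)$ is a frame matroid, or $r(M(A))=r(M(B))+1$, in which case $r(M(B))=|V(G)|-1$ forces $M(B)=M(G)$ by Theorem~\ref{graphic}, $A$ is $B$ plus one row, and $M$ is a lift of $M(G)$. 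If you want to salvage your route, you would need to prove the consistency step from scratch, which is tantamount to the theorem itself; the matrix argument is the idea your proposal is missing.
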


The representability condition in Theorem~\ref{main}
is necessary; the V\' amos matroid, for example, is
quasi-graphic but it is neither
a frame matroid nor a lifted-graphic matroid.
However, for frameworks with loop-edges, we do not require
representability.
\begin{theorem}\label{sufficient}
Let $G$ be a framework for a $3$-connected matroid $M$.
If $G$ has a loop-edge, then $M$ is either a frame matroid or 
a lifted-graphic matroid.
\end{theorem}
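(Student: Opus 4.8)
The plan is to make a couple of reductions, then decide between the two outcomes by a single test built from the loop-edge.

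\emph{Reductions.} Standard arguments let me assume $G$ is connected, using the $3$-connectivity of $M$ to rule out or simplify a disconnected framework (and dispatching the few small matroids directly). Write $n=|V(G)|$ and let $\ell$ be a loop-edge at a vertex $v$. Because $M$ is $3$-connected it is loopless, whereas $\ell$ is a loop of the cycle matroid $M(G)$; hence $M\neq M(G)$, so Theorem~\ref{graphic} applied to the framework $G$ forces $r(M)=n$. Thus $M$ carries exactly one unit of rank beyond a graphic matroid, with $\ell$ playing the role of an unbalanced loop that witnesses it.

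\emph{The test.} The crux is to ask whether $G\setminus\ell$ is a framework for $M\con\ell$. Axioms~(1) and~(2) transfer immediately, and the behaviour of closure under contracting $\ell$ makes~(3) and~(4) the substantive conditions. Suppose first that they do hold, so that $G\setminus\ell$ is a framework for $M\con\ell$. Since $r(M\con\ell)=n-1=|V(G\setminus\ell)|-1$, the rank hypothesis of Theorem~\ref{graphic} is met, and therefore $M\con\ell$ is graphic. Adjoining to $M$ a new element $e$ parallel to $\ell$ produces a matroid $M'$ with $M'\setminus e=M$ and $M'\con e$ equal to $M\con\ell$ with an added loop, hence graphic; so $M$ is a lifted-graphic matroid.

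\emph{The frame case.} It remains to treat the case where $G\setminus\ell$ fails to be a framework for $M\con\ell$. The failure localizes: it produces a pair of vertex-disjoint ``unbalanced'' cycles of $G$ (cycles whose edge sets are independent in $M$) whose union becomes dependent only after the extra rank carried by $\ell$ is removed. This is exactly the configuration that is independent in a frame matroid but dependent in a lift matroid, so it certifies that $M$ is not lifted-graphic and signals a frame structure. I would then build a framed matroid $(M',V)$ with $M=M'\setminus V$ by adjoining one basis element $b_w$ for each vertex $w$ of $G$ and arranging that each edge $xy$ is spanned by $\{b_x,b_y\}$ and each loop at $w$ by $\{b_w\}$; equivalently, declare a cycle of $G$ to be balanced when its edge set is dependent in $M$ and identify $M$ with the frame matroid of the resulting biased graph. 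Axioms~(3) and~(4) supply the handcuff-and-theta bookkeeping needed to verify that this is a matroid with $V$ a basis, and $3$-connectivity eliminates the degenerate configurations (parallel loop-edges, a single balancing vertex).

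\emph{Main obstacle.} The hard part is the frame case: turning the failure of the lift test from a mere obstruction into a reconstruction of a genuine frame representation, entirely combinatorially and without invoking representability. Property~(4) is the linchpin, since it is precisely what forbids a circuit whose induced subgraph has three or more components and thereby separates frame-type from lift-type dependencies; the loop-edge $\ell$ is what anchors the single surplus unit of rank and makes the whole dichotomy decidable from the framework alone.
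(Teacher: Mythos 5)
Your lift half is fine, and in fact slightly more direct than the paper's: once $G\setminus\ell$ is known to be a framework for $M\con\ell$, Theorem~\ref{graphic} gives $M\con\ell=M(G\setminus\ell)$, and the parallel-extension trick exhibits $M$ as a lift of $M(G)$ without invoking Zaslavsky's lift-matroid construction. (The paper instead splits on whether $\ell\in\cl_M(E(G-v))$ and proves $M=LM(G,\cB)$ in that case; one can check your test passes exactly when $\ell\in\cl_M(E(G-v))$, since conditions (1), (2), (4), and (3) at every vertex other than $v$ transfer automatically to $(G\setminus\ell, M\con\ell)$.)

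The frame half, however, has a genuine gap, and it is precisely the part you flag as "the hard part." To conclude that $M$ is a frame matroid you need $M=FM(G,\cB)$ where $\cB$ is the set of balanced (i.e.\ dependent) cycles, and by Lemma~\ref{frame2} this is equivalent to the \emph{universal} statement that every circuit of $M$ induces a connected subgraph of $G$ --- equivalently, by Lemma~\ref{circuit} and condition (4), that \emph{no} pair of vertex-disjoint unbalanced cycles has a dependent union. Your argument instead produces (without justification, but let us grant it) a single \emph{existential} witness: one pair of disjoint unbalanced cycles behaving frame-like. That certifies $M\neq LM(G,\cB)$, but it does not exclude a \emph{mixture}: some other pair of disjoint unbalanced cycles whose union is a circuit, which would make $M$ neither $FM(G,\cB)$ nor $LM(G,\cB)$. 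Ruling out this mixture is the entire content of the theorem, and asserting that axioms (3) and (4) "supply the handcuff-and-theta bookkeeping" does not do it; (4) only bounds the number of components of a circuit by two, so disconnected (lift-type) circuits are still permitted a priori. The paper closes exactly this gap with a circuit-exchange argument anchored at the loop-edge (Theorem~\ref{sufficient2}): if some circuit $E(C_1\cup C_2)$ were disconnected, take a shortest path $P$ from $v$ to $C_1$, show via Lemmas~\ref{dependentset} and~\ref{circuit} that $E(C_1\cup P)\cup\{e\}$ is a circuit, and apply circuit exchange to force $E(C_2)\cup\{e\}$ to be a circuit, contradicting $e\notin\cl_M(E(G-v))$ (which is what failure of your test yields). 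Nothing in your proposal plays the role of this step, so the frame case remains unproved.
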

Our proof of Theorem~\ref{sufficient}
uses results of Zaslavsky~[\ref{zaslavsky}] who characterized
frame matroids and lifted-graphic matroids using
``biased graphs"; we review those results in Sections~\ref{sec:lifts}
and~\ref{sec:frames}.

Given a graph $G$ and a matroid $M$, via its rank oracle, one
can efficiently check the conditions (1), (2), and (3). However,
since $M$ may have exponentially many circuits, it is not clear
how one might efficiently check (4). The following result
shows that one can easily certify that a $3$-connected matroid
is quasi-graphic.
\begin{theorem}\label{certify}
A $3$-connected matroid $M$ is quasi-graphic if 
and only if there exists 
a graph $G$ such that
\begin{itemize}
\item[(i)] $E(G)=E(M)$,
\item[(ii)] $G$ is connected,
\item[(iii)] $r(M)\le |V(G)|$, and
\item[(iv)] for each vertex $v$ of $G$ we have
$\cl_M(E(G-v))\subseteq E(G-v)\cup\loops_G(v)$.
\end{itemize}
\end{theorem}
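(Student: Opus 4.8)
The plan is to prove the two implications separately, after one simplifying observation: for a \emph{connected} graph $G$ the list (i)--(iv) is exactly conditions (1)--(3) in the definition of a framework, since a connected graph has a single component and then (2) becomes (iii). Thus a connected $G$ satisfying (i)--(iv) is a framework precisely when it also satisfies (4), and the theorem amounts to: (forward) from any framework for $M$ one can build a \emph{connected} graph satisfying (i)--(iv); and (reverse) for $3$-connected $M$ a connected graph satisfying (i)--(iv) automatically satisfies (4).

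For the forward direction, start with a framework $G_0$ for $M$ (so (1)--(4) hold). If $G_0$ is disconnected I would repeatedly merge two components by identifying a vertex $u$ of one with a vertex $w$ of another. Such an identification visibly preserves (1), and it preserves (3): from $\cl(E(G_0-u))\subseteq E(G_0-u)\cup\loops(u)$ and the same at $w$ one gets $\cl(E(G_0-\{u,w\}))\subseteq E(G_0-\{u,w\})\cup\loops(u)\cup\loops(w)$, which is exactly (3) at the new vertex (the loops at $u$ and $w$ become the loops there). It remains to keep $r(M)\le|V|$. Writing $A_i=E(H_i)$ for the $k$ components, since $M$ is connected every pair of elements lies in a common circuit, and (4) forces each such circuit to meet at most two of the $A_i$; hence for any ordering there is, for each $j$, a circuit meeting $A_1$ and $A_j$, so the local connectivity between $A_j$ and $A_1\cup\dots\cup A_{j-1}$ is at least $1$. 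Telescoping gives $\sum_i r(A_i)-r(M)\ge k-1$, and with (2) this yields $|V(G_0)|-r(M)\ge k-1$, exactly the room needed to perform the $k-1$ identifications while maintaining $r(M)\le|V|$.

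The reverse direction is the substantive one; here I must show (4) holds. Suppose $C$ is a circuit and $G[C]$ has components $D_1,\dots,D_m$ with edge sets $C_1,\dots,C_m$. The first step is a clean consequence of (iv): no vertex $v$ is incident to exactly one non-loop edge of $C$. Indeed, if $e$ were the unique $C$-edge at $v$ and were not a loop, then $C\setminus e\subseteq E(G-v)$ and, as $C$ is a circuit, $e\in\cl(C\setminus e)\subseteq\cl(E(G-v))$, contradicting (iv). Consequently each $D_i$ has minimum degree at least two and hence contains a cycle $O_i$; since $m\ge2$ each $O_i$ is a proper subset of the circuit $C$ and is therefore independent. (Genuine loop-edges must be treated separately; a matroid loop is a one-element circuit whose subgraph has a single component.)

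It remains to rule out $m\ge3$. Since $M|C\cong U_{|C|-1,|C|}$, any two of the sets $C_i$ are skew and $\sqcap(C_i,C\setminus C_i)=1$; in particular $O_1,O_2,O_3$ are pairwise skew while $\sqcap(O_1\cup O_2,O_3)=1$. My plan is to contradict the $3$-connectivity of $M$: first use (iv) to show that a small vertex-cut of $G$ produces a $1$- or $2$-separation of $M$, so that $3$-connectivity of $M$ forces $G$ to be essentially $3$-connected, and then, given three vertex-disjoint cyclic components, use Menger's theorem to extract internally disjoint external paths joining them and combine these paths, the cycles $O_i$, and orthogonality of circuits with the cocircuits supplied by (iv) to reach a contradiction. \textbf{This final step is the main obstacle.} The difficulty is that the single dependency tying the components together is spread over an entire cycle---no individual edge of $O_3$ lies in $\cl(O_1\cup O_2)$, since $O_3$ minus any edge is skew to $O_1\cup O_2$---so deleting a single vertex cannot expose it, and one is forced to use the global connectivity of $G$ in an essential way. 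Once (4) is established, $G$ is a framework and $M$ is quasi-graphic, completing the proof.
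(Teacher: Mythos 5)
Your forward direction is correct, and it is essentially the paper's own device in disguise: identifying a vertex of one component with a vertex of another is exactly what the paper's Lemma~\ref{xx} accomplishes by adding a coloop edge joining the two components and contracting it. Your local-connectivity count $|V(G_0)|-r(M)\ge k-1$ is a valid way to keep condition (iii) through all $k-1$ identifications; the paper avoids needing such a count because its Lemma~\ref{conn2} has already shown that a weak framework of a $3$-connected matroid has at most two non-trivial pieces (one of them a loop-component), so only one identification is ever required. The genuine gap is in the reverse direction, exactly where you flag it: you must show that a connected graph satisfying (i)--(iv), i.e.\ a connected weak framework, automatically satisfies condition (4), and your argument stops at the decisive point, namely ruling out a circuit $C$ for which $G[C]$ has $m\ge 3$ components. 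Moreover the lever you propose---contradicting $3$-connectivity of $M$ via vertex cuts and Menger's theorem---is not the right one: the paper's Lemma~\ref{circuit3} proves this implication for an \emph{arbitrary} matroid with at least four elements; $3$-connectivity of $M$ plays no role in it, and the only connectivity of $G$ used is the existence of a path between components of $G[C]$.

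The missing idea is to play the classification of circuits of a weak framework (Lemma~\ref{circuit}) against strong circuit elimination. By Lemma~\ref{circuit}, if $G[C]$ is disconnected then every component of $G[C]$ is a non-balanced cycle. Assuming $m\ge 3$, let $P$ be a shortest path in $G$ joining two components $C_1$ and $C_2$ of $G[C]$. Since $m\ge 3$, the set $E(C_1\cup C_2)$ is a proper subset of the circuit $C$ and hence independent; on the other hand $|E(C_1\cup C_2\cup P)|=|V(C_1\cup C_2\cup P)|+1$, so $E(C_1\cup C_2\cup P)$ is dependent by Lemma~\ref{dependentset}, and checking the three outcomes of Lemma~\ref{circuit} shows that the only circuit it can contain is the whole set $E(C_1\cup C_2\cup P)$. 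Now eliminate between the circuits $C$ and $E(C_1\cup C_2\cup P)$, keeping an edge $e\in E(P)$ and removing an edge $f\in E(C_1)$: this yields a circuit $C'$ with $e\in C'\subseteq (C\cup E(P))-\{f\}$. But every outcome of Lemma~\ref{circuit} forces $G[C']$ to have minimum degree at least two, while in $G[(C\cup E(P))-\{f\}]$ the edge $e$ lies in no subgraph of minimum degree two, because deleting $f$ turns $C_1$ into a path and the entire branch through $P$ peels away down to $C_2$; this is a contradiction. Note how this sidesteps precisely the obstruction you describe: one does not try to expose the dependency of one cycle on the other two by deleting a vertex; instead one manufactures a second circuit overlapping $C$ (two of the cycles plus the connecting path) and lets circuit elimination produce a circuit that the structure theory forbids.
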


We conjecture that the problem of recognizing $3$-connected
quasi-graphic matroids is tractable.
\begin{conjecture}\label{bigconj}
There is a polynomial-time algorithm that given a 
$3$-connected matroid $M$, via its rank-oracle,
decides whether or not $M$ is quasi-graphic.
\end{conjecture}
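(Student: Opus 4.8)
The plan is to attack the conjecture through its certificate form. By Theorem~\ref{certify}, a $3$-connected matroid $M$ is quasi-graphic if and only if there is a connected graph $G$ on the ground set $E(M)$ satisfying conditions (i)--(iv), and for any \emph{proposed} $G$ the rank oracle lets us verify (iii) and (iv) in polynomial time (one evaluation of $r_M(E(G-v))$ and one closure test per vertex). The whole problem therefore reduces to producing a single such $G$, or certifying that none exists, \emph{without} enumerating the exponentially many graphs on $E(M)$. So I would aim for an algorithm that reconstructs the graph directly from the flat and cocircuit structure that the oracle exposes.

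The key reconstruction identity is this. Suppose first that the sought $G$ is loopless. Then $\loops_G(v)=\emptyset$, so (iv) forces $E(G-v)=\cl_M(E(G-v))$; that is, each vertex-deletion $F_v:=E(G-v)$ is a flat of $M$, and its complement $E(M)\del F_v$ is the star of $v$, which, being the complement of a flat, is a union of cocircuits of $M$. Given the correct family of flats $F_{v_1},\dots,F_{v_n}$, the incidences are then completely determined: the endpoints of an edge $e$ are exactly those vertices $v$ with $e\notin F_v$, and this set has size two for a non-loop edge. Thus in the loopless case the algorithm amounts to selecting, from among the flats of $M$, the right family of $n$ vertex-flats. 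I would first pin down $n=|V(G)|$, which lies in $[r(M),\,|E(M)|+1]$ by (iii) and connectivity and which I expect the framework conditions to squeeze into a short interval; the remaining task is to recognize which flats arise as vertex-deletions, and I would look for additional necessary conditions to prune candidates (for instance on $r_M(F_v)$ and on pairwise intersections $F_u\cap F_v=E(G-u-v)$, which record the edges meeting neither $u$ nor $v$).

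For frameworks that are not loopless I would reduce to the loopless case using Theorem~\ref{sufficient}: any framework with a loop-edge already certifies that $M$ is a frame matroid or a lifted-graphic matroid, so it suffices to detect that situation and invoke the structural descriptions of those classes. In the representable case Theorem~\ref{main} gives a further handle: the underlying graph then comes, via Zaslavsky's biased-graph descriptions of frame and lifted-graphic matroids, from one of those two classes and is essentially unique, so one can try to locate a frame basis (a basis spanning every element by at most two of its members, read off from the rank-$2$ flats of the geometry) or a lift element (after whose contraction the matroid is graphic, testable by the known polynomial-time graphicness algorithm). These handles do not settle the conjecture, however, because quasi-graphic matroids that are \emph{not} representable, such as the V\'amos matroid, must also be recognized, and there neither description is available.

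The main obstacle --- and the reason this is posed as a conjecture rather than proved --- is exactly the selection of the vertex-flats. There is no known intrinsic, polynomially checkable condition that distinguishes the flats of the form $\cl_M(E(G-v))$ from the other flats of $M$, so the family of candidates may be of exponential size, and a brute-force search over $n$-subsets of candidates is hopeless. A proof would need either a structural characterization guaranteeing a polynomial-size, oracle-computable list of candidate vertices, or an inductive self-reduction that builds $G$ one element at a time while maintaining only polynomially many partial frameworks. Matroids with many parallel edges, many loop-edges at a single vertex, or symmetric substructures are precisely where the incidences threaten to become ambiguous, and controlling that branching is where I expect the real difficulty to lie.
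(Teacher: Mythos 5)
The statement you were asked to prove is Conjecture~\ref{bigconj}, and the paper contains no proof of it---it is posed precisely because the authors could not resolve it---so there is no argument of theirs to compare against, and your text, to its credit, does not actually claim to be a proof. Your preliminary reductions are individually sound: by Theorem~\ref{certify} a proposed connected graph $G$ can be verified with polynomially many rank evaluations (condition (iv) reduces to single-element closure tests $r_M(E(G-v)\cup\{e\})=r_M(E(G-v))$); in the loopless case each set $E(G-v)$ is indeed a flat of $M$ whose complement is the star of $v$; and the incidence structure of $G$ is recoverable from the family of vertex flats, with $|V(G)|$ confined to the interval you state. But the proposal stops exactly where the problem starts. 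You have no oracle-computable criterion that isolates the flats of the form $\cl_M(E(G-v))$ among the potentially exponentially many flats of $M$, and you concede this explicitly. A sketch whose central step is declared open does not settle the statement; the conjecture remains a conjecture after your write-up just as before it.

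One step of your plan is actively endangered rather than merely incomplete. When a framework has a loop-edge you propose to fall back on Theorem~\ref{sufficient} and then ``invoke the structural descriptions'' of frame and lifted-graphic matroids, e.g.\ by locating a frame basis or a lift element. But the paper notes, immediately after the conjecture, that by the results of [\ref{rw}] there is no efficient rank-oracle algorithm for frame-matroid or lifted-graphic-matroid recognition; indeed certifying membership in either class requires exponentially many rank evaluations in the worst case. So any route that recognizes quasi-graphicness by first recognizing one of those two subclasses is ruled out in the oracle model: the conjectured tractability of quasi-graphic recognition must come from the framework certificate itself (as in Theorem~\ref{certify}), not from the frame/lift dichotomy. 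Similarly, your appeal to Theorem~\ref{main} presupposes knowing that $M$ is representable, which a rank oracle does not reveal---and, as you note, non-representable quasi-graphic matroids such as the V\'amos matroid must also be handled. Your closing diagnosis---that what is needed is either a polynomial-size, oracle-computable candidate list of vertex flats or an incremental construction of $G$ with controlled branching---is a reasonable statement of the open problem, but it is a research program, not a proof.
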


This contrasts with results of Rong and Whittle~[\ref{rw}]
who prove that there is no efficient algorithm 
for either frame matroid recognition or 
lifted-graphic matroid recognition.
In fact, Rong and Whittle's results show that
to certify that a matroid is either a frame matroid
or a lifted graphic matroid requires an exponential number
of rank evaluations in the worst case; those results were 
conjectured in an earlier version of this paper.

We will use the notation and terminology of Oxley~[\ref{oxley}],
except we denote $|E(M)|$ by $|M|$ and we define a graph $G$
to be {\em $k$-connected} when $G-X$ is connected for each set
$X\subseteq V(G)$ with $|X|<k$ (we do not require that $|V(G)|>k$);
moreover, we consider that the graph with no vertices is connected.

\section{Weak frameworks and minors}

For the aid of readers familiar with the earlier
flawed version of this paper, we will keep the following two 
sections essentially unchanged except for correcting 
the transgression that was the root cause for the mistakes.
To this end, we need to introduce a version of frameworks 
without the condition (4).
 
Let $M$ be a matroid and $G$ be a graph.
We call $G$ a {\em weak framework}  for $M$ if 
\begin{itemize}
\item[(1)] $E(G)=E(M)$,
\item[(2)] $r_M(E(H))\le |V(H)|$ for each component $H$ of $G$, and
\item[(3)] for each vertex $v$ of $G$ we have
$\cl_M(E(G-v))\subseteq E(G-v)\cup\loops_G(v)$.
\end{itemize}
Note that a graph $G$ satisfies conditions $(i)-(iv)$ of
Theorem~\ref{certify} if and only if it is a connected weak framework.
In this section we will 
see that weak frameworks behave nicely under minors.

\begin{lemma}\label{component}
Let $G$ be a weak framework for a matroid $M$.
If $H$ is a component of $G$, then 
$H$ is a weak framework for $M|E(H)$.
\end{lemma}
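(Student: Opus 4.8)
The plan is to verify the three defining conditions of a weak framework for the restriction $M|E(H)$, working component-by-component. Set $N = M|E(H)$, so that $E(N) = E(H)$ by definition of restriction, which immediately gives condition~(1). The main task is therefore to transfer conditions~(2) and~(3) from the ambient pair $(G,M)$ to the pair $(H, N)$.

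Condition~(2) should be nearly immediate: a component $H'$ of $H$ is also a component of $G$, so $r_M(E(H')) \le |V(H')|$ holds by condition~(2) for $G$. Since $E(H') \subseteq E(H)$, restricting $M$ to $E(H)$ does not change the rank of subsets of $E(H')$, i.e. $r_N(E(H')) = r_M(E(H'))$, so condition~(2) carries over. The only subtlety is making sure one is quantifying over the components of $H$ (which are exactly the components of $G$ contained in $H$), not $H$ itself.

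The crux is condition~(3), and here I expect the main obstacle. For a vertex $v$ of $H$ I would need $\cl_N(E(H - v)) \subseteq E(H - v) \cup \loops_H(v)$. The natural approach is to compare closures in $N$ and in $M$. Since $H$ is a union of connected components of $G$, the edges outside $E(H)$ touch vertices outside $V(H)$; in particular $G - v$ restricted to the ground set $E(H)$ equals $H - v$, and $\loops_H(v) = \loops_G(v) \cap E(H)$. Because $N = M|E(H)$, closure in $N$ is related to closure in $M$ by $\cl_N(X) = \cl_M(X) \cap E(H)$ for $X \subseteq E(H)$. Applying this with $X = E(H - v)$ and using condition~(3) for $G$ gives
\[
\cl_N(E(H-v)) = \cl_M(E(H-v)) \cap E(H) \subseteq \bigl(\cl_M(E(G-v)) \cap E(H)\bigr),
\]
where the inclusion uses $E(H-v)\subseteq E(G-v)$ together with monotonicity of $\cl_M$. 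Then condition~(3) for $G$ bounds $\cl_M(E(G-v))$ by $E(G-v)\cup\loops_G(v)$, and intersecting with $E(H)$ collapses this to $E(H-v)\cup\loops_H(v)$, as required.

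The one point demanding care is the identity $\cl_N(X) = \cl_M(X)\cap E(H)$ for restrictions; this is a standard fact about matroid restriction, but I would state it explicitly since the whole argument rests on it. I would also double-check the edge bookkeeping, namely that an element of $E(H)$ lying in $\cl_M(E(G-v))$ must already lie in $E(G-v)$ or be a loop at $v$, and that such a loop, being in $E(H)$, is a loop of $H$. Once these bookkeeping identities are in place, the three conditions follow directly and the lemma is proved.
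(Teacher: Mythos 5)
Your proof is correct and follows essentially the same route as the paper: conditions (1) and (2) transfer immediately, and condition (3) is handled via the standard compatibility of closure with restriction (your identity $\cl_N(X)=\cl_M(X)\cap E(H)$ is exactly the paper's observation that flats of $M$ intersect $E(H)$ in flats of $M|E(H)$), together with the bookkeeping $E(G-v)\cap E(H)=E(H-v)$ and $\loops_G(v)\cap E(H)=\loops_H(v)$.
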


\begin{proof}
Note that conditions $(1)$ and $(2)$ are immediate.
Condition $(3)$ follows from the fact that
for each flat $F$ of $M$, the set $F\cap E(H)$ 
is a flat of $M|E(H)$.
\end{proof}

The following result is very easy, but it is used repeatedly.
\begin{lemma}\label{deletevertex}
Let $G$ be a weak framework for a matroid $M$.
If $v$ is a vertex of $G$ that is incident with 
at least one non-loop-edge, then $r_M(E(G-v))<r(M)$.
Moreover, if $v$ has degree one, then 
$r_M(E(G-v))=r(M)-1$.
\end{lemma}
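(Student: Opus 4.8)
The plan is to prove both statements directly from the definition of a weak framework, leaning on condition $(3)$ for the first part and a one-element rank bound for the second. The whole argument is short, so the emphasis is on getting the set-theoretic bookkeeping right rather than on any deep idea.

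For the first statement I would argue by contradiction. Suppose $r_M(E(G-v))=r(M)$. Then $E(G-v)$ is spanning in $M$, so $\cl_M(E(G-v))=E(M)=E(G)$. Applying condition $(3)$ gives $E(G)\subseteq E(G-v)\cup\loops_G(v)$. Now let $e$ be a non-loop-edge incident with $v$, which exists by hypothesis. Since $e$ is incident with $v$ it is deleted in passing to $G-v$, so $e\notin E(G-v)$; and since $e$ is not a loop-edge at $v$, we have $e\notin\loops_G(v)$. Thus $e\in E(G)\setminus\big(E(G-v)\cup\loops_G(v)\big)$, contradicting the inclusion just obtained. Hence $r_M(E(G-v))<r(M)$.

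For the second statement, observe that a vertex $v$ of degree one is incident with exactly one edge $e$, which is a non-loop-edge, and with no loops; in particular $E(G)=E(G-v)\cup\{e\}$. Since adding a single element to a set raises its rank by at most one, we get $r(M)=r_M(E(G))\le r_M(E(G-v))+1$, i.e.\ $r_M(E(G-v))\ge r(M)-1$. On the other hand, $v$ is incident with the non-loop-edge $e$, so the first part applies and yields $r_M(E(G-v))\le r(M)-1$. Combining the two inequalities gives $r_M(E(G-v))=r(M)-1$.

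There is no serious obstacle here; the only point that needs care is the correct reading of $G-v$ and of $\loops_G(v)$, namely that $E(G)\setminus E(G-v)$ consists precisely of the non-loop-edges at $v$ together with $\loops_G(v)$, and that a degree-one vertex contributes exactly one such edge and no loops. Once this is pinned down, both conclusions follow immediately.
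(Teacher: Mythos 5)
Your proof is correct and is exactly the argument the paper intends: the paper's proof is just the one-line remark that the lemma ``follows directly from $(3)$,'' and your write-up is a careful elaboration of that same idea---condition $(3)$ forces a non-loop-edge at $v$ outside $\cl_M(E(G-v))$, and the degree-one case combines this with the trivial bound that adding one element raises rank by at most one.
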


\begin{proof}
This follows directly from $(3)$.
\end{proof}

\begin{lemma}\label{subgraph}
Let $G$ be a connected weak framework for a matroid $M$ and let 
$H$ be a non-empty subgraph of $G$. Then
$|V(H)|-r(M|E(H))\ge |V(G)| - r(M)$.
\end{lemma}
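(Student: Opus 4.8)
The plan is to reduce to the case where $H$ is obtained from $G$ by deleting a set of vertices, and then to track how the rank grows as those vertices are reinstated one at a time. First I would reduce to induced subgraphs. Set $S = V(G)\setminus V(H)$; since $H$ is non-empty we have $V(H)\neq\emptyset$ and so $S\neq V(G)$. Every edge of $H$ has both ends in $V(H)$, so $E(H)\subseteq E(G-S)$ and hence $r_M(E(H))\le r_M(E(G-S))$ by monotonicity of rank. As $|V(H)|=|V(G-S)|$, it therefore suffices to prove the (formally stronger) inequality $|V(G-S)|-r_M(E(G-S))\ge |V(G)|-r(M)$, equivalently
$$r(M)-r_M(E(G-S))\ge |S|.$$

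The main step is to exhibit, for each vertex of $S$, a private non-loop edge whose addition strictly increases the rank. Since $G$ is connected and $V(H)\neq\emptyset$, I would fix a spanning tree of $G$ rooted at a vertex of $V(H)$ and list $S=\{v_1,\dots,v_k\}$ in order of non-increasing depth. Writing $G_i=G-\{v_1,\dots,v_i\}$, the parent $p_i$ of $v_i$ then has strictly smaller depth than every $v_j$ with $j\le i$, so $p_i\in V(G_i)$ and the tree edge $e_i=v_ip_i$ is a non-loop edge present in $G_{i-1}$ but absent from $G_i$. I would reinstate the vertices in reverse order and use condition $(3)$ to show each $e_i$ lies outside the closure of the current edge set: because $e_i$ is a non-loop edge at $v_i$ we have $e_i\notin \cl_M(E(G-v_i))$, and since $E(G_i)\subseteq E(G-v_i)$ this gives $e_i\notin \cl_M(E(G_i))$. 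Hence $r_M(E(G_{i-1}))\ge r_M(E(G_i))+1$, and summing over $i$ yields $r(M)=r_M(E(G_0))\ge r_M(E(G_k))+k=r_M(E(G-S))+|S|$, as required.

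The point requiring care — and the reason for routing the argument through a spanning tree rather than a naive induction — is that condition $(3)$ is an assertion about $G$ itself and is not obviously inherited by the vertex-deleted subgraphs $G_i$; in particular one cannot simply apply Lemma~\ref{deletevertex} to $G_i$, because $G_i$ need not be a weak framework for $M$. The chosen ordering guarantees that each $v_i$ still has a neighbour present when it is reinstated, while applying $(3)$ with respect to the original $G$ (together with the inclusion $E(G_i)\subseteq E(G-v_i)$) supplies the needed rank increase without ever leaving the hypotheses actually available. Note that conditions $(1)$ and $(2)$ play no role here: only the connectivity of $G$, the non-emptiness of $H$, and condition $(3)$ are used.
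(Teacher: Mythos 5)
Your proof is correct and is essentially the paper's argument: both pass between $H$ and a spanning subgraph of $G$ one vertex at a time, using condition (3) to extract a rank increment of one per missing vertex from a non-loop attachment edge (your tree edges $e_i$; the paper instead extends $H$ to a spanning subgraph $H^+$ by adding one vertex and one edge at a time and then peels off degree-one vertices). The only notable difference is that you justify each increment by applying (3) in $G$ itself together with monotonicity of closure, whereas the paper cites Lemma~\ref{deletevertex} for intermediate subgraphs that are not themselves known to be weak frameworks; your explicit handling of that subtlety is the more careful of the two, but it is the same underlying idea.
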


\begin{proof}
We can extend $H$ to a spanning subgraph $H^+$ of $G$,
adding one vertex and one edge at a time,  with
$|E(H^+)|-|E(H)|= |V(G)|-|V(H)|$. Clearly
$|V(H^+)|-r(E(H^+))\ge |V(G)| - r(M)$.
If $H\neq H^+$, then there is a vertex $v\in V(H^+)-V(H)$
that has degree one in $H^+$. By Lemma~\ref{deletevertex},
$r(E(H^+-v)) = r(E(H))-1$ and, hence,
$|V(H^+-v)|-r(E(H^+-v))\ge |V(G)| - r(M)$.
Now we obtain the result by repeatedly deleting vertices
in $V(H^+)-V(H)$ in this way.
\end{proof}

If $X$ is a set of edges in a graph $G$, then $G[X]$ is the subgraph
of $G$ with edge-set $X$ and with no isolated vertices;
moreover, we will denote $V(G[X])$ by $V(X)$.
\begin{lemma}\label{restriction}
If $G$ is a weak framework for a matroid $M$ and 
$X\subseteq E(M)$, then $G[X]$ is a weak framework for $M|X$.
\end{lemma}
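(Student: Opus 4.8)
The plan is to verify the three defining conditions of a weak framework for the pair $(G[X], M|X)$ directly, dispatching (1) and (3) by elementary set manipulations and reserving the rank lemmas for (2). Throughout I will use the standard restriction identities $\cl_{M|X}(Y)=\cl_M(Y)\cap X$ and $r_{M|X}(Y)=r_M(Y)$ for $Y\subseteq X$.

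Condition (1) is immediate, since $E(G[X])=X=E(M|X)$ by the definition of $G[X]$ and of restriction. For condition (3) I would fix a vertex $v$ of $G[X]$, which is also a vertex of $G$, and exploit two bookkeeping identities that hold because $G[X]$ carries exactly the edges of $X$: namely $E(G[X]-v)=E(G-v)\cap X$ and $\loops_{G[X]}(v)=\loops_G(v)\cap X$. Since $E(G[X]-v)\subseteq E(G-v)$, monotonicity of closure gives $\cl_M(E(G[X]-v))\subseteq\cl_M(E(G-v))$, and condition (3) for $G$ bounds the latter by $E(G-v)\cup\loops_G(v)$. Intersecting with $X$ and applying the restriction-closure identity then yields $\cl_{M|X}(E(G[X]-v))\subseteq E(G[X]-v)\cup\loops_{G[X]}(v)$, which is exactly (3) for $G[X]$.

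The main work is condition (2), where the obstacle is that $G$ need not be connected, so I cannot apply Lemma~\ref{subgraph} to $G$ itself. Instead I would reduce to the connected case: a component $H'$ of $G[X]$ is connected and hence lies in a single component $H$ of $G$, and by Lemma~\ref{component} this $H$ is a connected weak framework for $M|E(H)$. Applying Lemma~\ref{subgraph} to the connected weak framework $H$ and its non-empty subgraph $H'$ gives $|V(H')|-r_{M|E(H)}(E(H'))\ge |V(H)|-r_{M|E(H)}(E(H))$. The right-hand side equals $|V(H)|-r_M(E(H))$, which is non-negative by condition (2) of the framework $G$. Since $r_{M|E(H)}(E(H'))=r_M(E(H'))=r_{M|X}(E(H'))$, this rearranges to $r_{M|X}(E(H'))\le |V(H')|$, as required.

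I expect condition (2) to be the only genuine obstacle, precisely because it is a global rank inequality rather than a local closure statement. Its content is already packaged into Lemma~\ref{subgraph}, so the real task is to recognize that $H'$ sits inside a connected weak framework and then to feed in the already-established bound $r_M(E(H))\le|V(H)|$; the remaining conditions reduce to the monotonicity of closure and routine identities about $G[X]$.
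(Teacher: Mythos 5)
Your proof is correct and follows essentially the same route as the paper: condition (1) is immediate, condition (2) is obtained by combining Lemma~\ref{component} with Lemma~\ref{subgraph} applied inside each component of $G$, and condition (3) follows from the compatibility of closure (equivalently, flats) with restriction. The paper merely cites these facts without spelling out the details, so your write-up is a faithful elaboration of its argument rather than a different proof.
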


\begin{proof}
Condition $(1)$ is clearly satisfied.
Condition $(2)$ follows from Lemmas~\ref{component}
and~\ref{subgraph}.
Condition $(3)$ follows from the fact that
for each flat $F$ of $M$, the set $F\cap E(H)$ 
is a flat of $M|E(H)$.
Thus $G[X]$ is a weak framework for $M|X$.
\end{proof}

The following two results give sufficient conditions for
independence and dependence, respectively, for a set in
a quasi-graphic matroid.
\begin{lemma}\label{forest}
Let $G$ be a weak framework for a matroid $M$.
If $F$ is a forest of $G$, then $E(F)$ is an independent
set of $M$.
\end{lemma}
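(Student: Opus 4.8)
The plan is to prove this by induction on the number of edges in the forest $F$, using the fact that a forest is built up one edge at a time. I would prove the contrapositive structure: show that adding a forest edge to an already-independent set keeps it independent, by exploiting condition $(3)$ of the weak framework definition together with Lemma~\ref{deletevertex}.

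First, I would set up the induction. The base case is the empty forest (or a single edge that is not a loop, which is independent since condition $(3)$ prevents a non-loop edge from being a loop of $M$). For the inductive step, let $F$ be a forest with at least one edge. Since $F$ is a forest, it has a leaf; more usefully, I would pick an edge $e = uv$ of $F$ such that one of its endpoints, say $v$, has degree one in $F$. Removing $e$ leaves a forest $F - e = F'$, and by the inductive hypothesis $E(F')$ is independent in $M$.

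The key step is to show that $e \notin \cl_M(E(F'))$. Consider the vertex $v$, which is incident with $e$ (a non-loop-edge). The idea is to compare $r_M(E(F))$ with $r_M(E(F) \del e)$. Applying Lemma~\ref{restriction}, $G[E(F)]$ is a weak framework for $M|E(F)$; within this weak framework $v$ is incident with the non-loop-edge $e$, so Lemma~\ref{deletevertex} applies. In fact, since $v$ has degree one in $F$, the "moreover" clause of Lemma~\ref{deletevertex} gives $r_{M|E(F)}(E(F) \del e) = r_{M|E(F)}(E(F)) - 1$, which is exactly the statement that $e \notin \cl_M(E(F'))$. Combined with the inductive hypothesis that $E(F')$ is independent, this shows $E(F) = E(F') \cup \{e\}$ is independent in $M$.

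The main obstacle I anticipate is a bookkeeping subtlety: Lemma~\ref{deletevertex} is phrased for a weak framework, so I must first pass to the weak framework $G[E(F)]$ for $M|E(F)$ via Lemma~\ref{restriction}, and then verify that $v$ still has degree one there (which it does, since $F$ is a subgraph of $G$ and its edge set is exactly what survives in $G[E(F)]$ restricted to $v$'s incidences within $F$ — one must be slightly careful that $v$ is incident with no edges of $G$ outside $F$ that lie in $E(F)$, but since we are working inside $M|E(F)$ and the rank statement is about $E(F)$, this is handled by restricting attention to the forest itself). Once this translation is made correctly, the rank-drop computation is immediate from Lemma~\ref{deletevertex}.
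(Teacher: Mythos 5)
Your proof is correct and takes essentially the same route as the paper: induct on the forest, locate a degree-one vertex, and use the ``moreover'' clause of Lemma~\ref{deletevertex} (applied to the weak framework $G[E(F)]$ for $M|E(F)$ obtained from Lemma~\ref{restriction}) to get the rank drop. The paper's proof is simply terser, leaving the passage through Lemma~\ref{restriction} implicit, which you spell out correctly.
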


\begin{proof}
We may assume that $E(F)$ is non-empty and, hence, that
$F$ has a degree-one vertex $v$. By Lemma~\ref{deletevertex},
$r_M(E(F)) = r_M(E(F-v))+1$. Now the result
follows inductively.
\end{proof}

\begin{lemma}\label{dependentset}
Let $G$ be a weak framework for a matroid $M$. 
If $H$ is a subgraph of $G$ and
$|E(H)|>|V(H)|$,
then $E(H)$ is a dependent set of $M$.
\end{lemma}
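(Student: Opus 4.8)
The plan is to reduce to the restriction $M|E(H)$ by means of Lemma~\ref{restriction} and then feed in condition $(2)$. First I would set $X=E(H)$ and pass from $H$ to $G[X]$, the subgraph of $G$ with edge-set $X$ and no isolated vertices. By definition $G[X]$ is simply $H$ with its isolated vertices deleted, so $E(G[X])=X$ while $|V(G[X])|\le |V(H)|$. Since $G$ is a weak framework for $M$, Lemma~\ref{restriction} tells us that $G[X]$ is a weak framework for $M|X$, which is exactly the setting in which condition $(2)$ becomes available.

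With this in hand it suffices to bound $r_M(X)$ from above. Let $C_1,\dots,C_k$ be the components of $G[X]$; their edge-sets partition $X$ and their vertex-sets partition $V(G[X])$. Applying condition $(2)$ of the weak framework $G[X]$ to each component $C_i$, and combining this with the subadditivity of the rank function of $M|X$, gives
$$r_M(X)=r_{M|X}(X)\le\sum_{i=1}^{k}r_{M|X}(E(C_i))\le\sum_{i=1}^{k}|V(C_i)|=|V(G[X])|\le |V(H)|.$$
The hypothesis $|E(H)|>|V(H)|$ then yields $r_M(X)\le |V(H)|<|E(H)|=|X|$, so $r_M(X)<|X|$ and hence $X=E(H)$ is dependent in $M$, as required.

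I expect no genuine obstacle here: the argument rests only on two standard matroid facts, namely that restriction preserves the rank of subsets of the restricted ground set (so that $r_{M|X}(X)=r_M(X)$) and that matroid rank is subadditive over a partition of the ground set. All of the real content is carried by Lemma~\ref{restriction} together with condition $(2)$, and the numerical hypothesis $|E(H)|>|V(H)|$ enters only in the final strict inequality. The one point to take care over is the passage from $H$ to $G[X]$: I would make explicit that deleting isolated vertices can only decrease the vertex count, so that the bound $|V(G[X])|\le |V(H)|$ holds and the conclusion transfers back to $H$.
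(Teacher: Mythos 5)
Your proof is correct and follows essentially the same route as the paper's: apply Lemma~\ref{restriction} to pass to $G[E(H)]$ as a weak framework for $M|E(H)$, then use condition $(2)$ to bound $r_M(E(H))$ by $|V(H)|$. The only difference is that you spell out the component-wise application of $(2)$ and the subadditivity of rank, which the paper's two-line proof leaves implicit.
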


\begin{proof}
By Lemma~\ref{restriction} and $(2)$, we have
$r_M(E(H))\le |V(H)|$. So, if $|E(H)|>|V(H)|$,
then $E(H)$ is a dependent set of $M$.
\end{proof}

We can now prove Theorem~\ref{graphic}.
The ``only if" direction is routine and left to the reader.
For the ``if" direction we prove the following stronger result
in which we have replaced ``framework" with ``weak framework".
This result is tantamount to the main theorem of~[\ref{seymour}],
but we include the proof since it is short and the
result is central to this paper.
\begin{theorem}\label{strong-graphic}
Let $G$ be a graph with $c$ components and let $M$ be a matroid.
If $G$ is a weak framework for $M$ and $r(M) \le |V(G)|-c$,
then $M$ is the cycle matroid of $G$.
\end{theorem}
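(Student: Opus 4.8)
The plan is to show that $M$ and the cycle matroid $M(G)$ have exactly the same independent sets, namely the edge sets of forests of $G$. One inclusion is immediate: by Lemma~\ref{forest} every forest of $G$ is independent in $M$. In particular a spanning forest of $G$, which has $|V(G)|-c$ edges, is independent, so $r(M)\ge |V(G)|-c$; combined with the hypothesis this gives $r(M)=|V(G)|-c$. It then remains to prove the reverse inclusion, and for this it suffices to show that \emph{every cycle of $G$ is dependent in $M$}: any edge set that is not a forest contains a cycle and is therefore dependent, so the independent sets of $M$ are precisely the forests, whence $M=M(G)$.

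The engine of the argument is a rank computation driven by condition $(3)$. First I would record the following consequence of $(3)$ (in the spirit of Lemma~\ref{deletevertex}): if $\Phi$ is a set of non-loop edges that can be listed $f_1,\dots,f_m$ so that each $f_\ell$ has an endpoint $u_\ell$ incident with no edge of $S\cup\{f_1,\dots,f_{\ell-1}\}$, then $r_M(S\cup\Phi)=r_M(S)+m$. Indeed, writing $v=u_\ell$, the set $S\cup\{f_1,\dots,f_{\ell-1}\}$ lies in $E(G-v)$, so by monotonicity and $(3)$ its closure is contained in $E(G-v)\cup\loops_G(v)$ and hence avoids the non-loop edge $f_\ell$ at $v$; thus each $f_\ell$ raises the rank by one. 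Such an ordering exists whenever the edges of $\Phi$ form a forest that we grow outward from its roots, each new edge contributing a vertex not met before and disjoint from $V(S)$.

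The main step, and the place I expect the real work to be, is proving that no cycle of $G$ is independent. Suppose for contradiction that some cycle $C'$, lying in a component $H_i$, is independent, so that $r_M(E(C'))=|E(C')|=|V(C')|$. I would build an edge set $A$ as follows: inside $H_i$ adjoin to $C'$ a tree $\Psi$ attaching the remaining vertices of $H_i$, so that $C'\cup\Psi$ spans $V(H_i)$ and has exactly $|V(H_i)|$ edges; and for every other component $H_j$ adjoin a spanning tree $T_j$. Starting from $E(C')$ and adding the edges of $\Psi$ and of the $T_j$ one at a time, each added edge reaches a brand new vertex, in $V(H_i)\setminus V(C')$ or in $V(H_j)$ respectively, untouched by the edges already chosen, so the rank lemma applies at every step. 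Hence $A$ is independent with
\[
|A| \;=\; |V(H_i)| + \sum_{j\neq i}\bigl(|V(H_j)|-1\bigr) \;=\; |V(G)|-c+1,
\]
which exceeds $r(M)=|V(G)|-c$, an impossibility. This contradiction shows that every cycle is dependent.

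Two features make this approach attractive. It handles the disconnected case directly: applying $(3)$ at a vertex of the component currently being grown prevents the other components from helping to span a new edge, so no separate direct sum or separator reduction is needed, and the connected case is simply $c=1$ (so Lemma~\ref{subgraph} is not even required). It also treats loop-edges and parallel edges uniformly, since the only property of $C'$ used is $|E(C')|=|V(C')|$. With every cycle dependent, the independent sets of $M$ are exactly the forests of $G$, which are the independent sets of $M(G)$; therefore $M=M(G)$, as required.
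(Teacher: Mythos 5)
Your proof is correct, and its skeleton matches the paper's: by Lemma~\ref{forest} all forests of $G$ are independent in $M$, so it suffices to show that every cycle of $G$ is dependent. Where you genuinely diverge is in how cycle-dependence is established. The paper first reduces to the connected case---it asserts that $r_M(E(H))=|V(H)|-1$ for every component $H$ and then passes to the components---and for connected $G$ it invokes Lemma~\ref{subgraph}: for a cycle $C$, $|V(C)|-r_M(E(C))\ge |V(G)|-r(M)=1$, so $E(C)$ is dependent. You instead argue globally by contradiction: a hypothetically independent cycle could be grown, one edge at a time, into an independent set of size $|V(G)|-c+1>r(M)$. Your rank-increment lemma is the same mechanism that powers Lemma~\ref{deletevertex} and the proof of Lemma~\ref{subgraph} (condition $(3)$ keeps a non-loop edge with a fresh endpoint out of the closure of the edges already chosen), but by stating it for arbitrary edge sets rather than for subgraphs of a \emph{connected} weak framework, you can grow trees across all components simultaneously, so no reduction to the connected case is needed. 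This buys a transparent treatment of disconnected $G$: the paper's one-line claim that $r_M(E(H))=|V(H)|-1$ for each component, and the implicit direct-sum decomposition behind ``hence we may assume that $G$ is connected,'' both compress an argument of essentially the kind you spell out. What the paper's route buys in exchange is economy: it reuses Lemma~\ref{subgraph}, which it has already proved and needs elsewhere (for instance at the start of the section on balanced cycles and, via Lemma~\ref{restriction}, in Lemma~\ref{dependentset}), so its connected case is a two-line deficiency computation.
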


\begin{proof}
By Lemma~\ref{forest} and the fact that
$r(M)\le |V(G)|-c$, we have $r(E(H)) = |V(H)|-1$ for each component
$H$ of $G$. Hence we may assume that $G$ is connected.
By Lemma~\ref{forest}, the edge-set of each forest of $G$ is independent
in $M$. Therefore, it suffices to prove, for each cycle
$C$ of $G$, that $E(C)$ is dependent in $M$.
By Lemma~\ref{subgraph},
$|V(C)|-r(E(C)) \ge |V(G)| - r(E(G)) = 1$.
So $r(E(C)) < |V(C)|=|E(C)|$ and, hence, $E(C)$ is 
dependent as required.
\end{proof}

To consider the effect of contraction on weak frameworks,
we consider two cases depending on whether or not we are
contracting a loop-edge.
\begin{lemma}\label{contract}
Let $G$ be a  weak framework for a matroid $M$ and let 
$e$ be a non-loop-edge of $G$.
Then $G\con e$ is a weak framework for $M\con e$.
\end{lemma}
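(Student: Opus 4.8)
The plan is to verify that $G\con e$ satisfies conditions $(1)$, $(2)$, and $(3)$ for the matroid $M\con e$, treating $e$ as a non-loop-edge with endpoints, say, $u$ and $w$ that become identified into a single vertex of $G\con e$. Condition $(1)$ is immediate since contracting $e$ removes it from both the edge set of the graph and the ground set of the matroid, so $E(G\con e)=E(M\con e)$. For condition $(2)$, I would use the fact that contraction in the matroid lowers ranks in a controlled way: for a component $H'$ of $G\con e$, either $H'$ arises from a component $H$ of $G$ not meeting $e$ (in which case the inequality is inherited directly), or $H'$ comes from a component $H$ containing $e$, in which case $|V(H')|=|V(H)|-1$ and $r_{M\con e}(E(H')) = r_M(E(H)\cup\{e\})-1 \le r_M(E(H))-1+1 = r_M(E(H)) \le |V(H)| = |V(H')|+1$; here I would need to argue more carefully that in fact $r_{M\con e}(E(H'))\le r_M(E(H))-1$, using that $e\in E(H)$ so adjoining $e$ does not raise the rank and contraction drops it by one, giving $r_{M\con e}(E(H'))=r_M(E(H))-1\le |V(H)|-1=|V(H')|$.

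For condition $(3)$, the key translation is between vertices of $G\con e$ and vertices of $G$. A vertex $x$ of $G\con e$ other than the contracted vertex corresponds to a vertex of $G$, and $(G\con e)-x$ is essentially $(G-x)\con e$; I would then use that the matroid closure operator on $M\con e$ relates to that of $M$ via $\cl_{M\con e}(Y)=\cl_M(Y\cup\{e\})\del e$, combined with condition $(3)$ for $G$ applied at $x$. The loop-edges of $G\con e$ at $x$ coincide with those of $G$ at $x$ (since $x$ is untouched by the contraction), so the inclusion transfers cleanly. The main case to handle with care is the contracted vertex $z$ (the identification of $u$ and $w$): here $(G\con e)-z$ equals $G-\{u,w\}$, and one must control $\cl_{M\con e}(E(G-\{u,w\}))$. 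I would apply condition $(3)$ of $G$ at both $u$ and $w$, intersecting the two containments, and account for the fact that $\loops_{G\con e}(z)$ may acquire new loop-edges, namely those edges of $G$ joining $u$ and $w$ that become loops after identification.

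The step I expect to be the main obstacle is condition $(3)$ at the contracted vertex $z$, precisely because new loop-edges are created and because the closure in $M\con e$ must be bounded in terms of the closures in $M$ at two distinct vertices simultaneously. The clean fact to exploit is that for any $Y\subseteq E(M\con e)$ one has $\cl_{M\con e}(Y)\del e = \cl_M(Y\cup\{e\})\del e$, so an element $f$ in $\cl_{M\con e}(E(G-\{u,w\}))$ lies in $\cl_M(E(G-\{u,w\})\cup\{e\})$, which is contained in $\cl_M(E(G-u))$; applying condition $(3)$ at $u$ places $f$ in $E(G-u)\cup\loops_G(u)$, and symmetrically at $w$. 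Intersecting these and identifying which surviving edges are incident only with $z$ yields that $f$ lies in $E((G\con e)-z)\cup\loops_{G\con e}(z)$, where the latter now correctly includes the newly-formed loop-edges. Once this careful bookkeeping at $z$ is complete, the remaining vertices are routine and the lemma follows.
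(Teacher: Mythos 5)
Your overall decomposition (conditions (1), (2) at components, (3) at unaffected vertices, then (3) at the contracted vertex $z$) matches the structure of the problem, and you correctly identify the contracted vertex as the crux. But the pivotal step of your treatment of $z$ is wrong: the containment $\cl_M(E(G-\{u,w\})\cup\{e\})\subseteq \cl_M(E(G-u))$ is not merely unjustified, it is inconsistent with the hypotheses. Any containment $\cl_M(A\cup\{e\})\subseteq\cl_M(B)$ forces $e\in\cl_M(B)$, whereas condition (3) applied at $u$ says exactly that $e\notin\cl_M(E(G-u))$, since $e$ is a non-loop-edge incident with $u$. Discarding $e$ itself does not rescue the step: if $f$ is an edge of $G$ joining $u$ and $w$ that is parallel to $e$ in $M$, then $f\in\cl_M(E(G-\{u,w\})\cup\{e\})$ but $f\notin E(G-u)\cup\loops_G(u)$, so the conclusion you want to draw from (3) at $u$ is false for such $f$. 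This also exposes an internal inconsistency in your plan: the intersection $\bigl(E(G-u)\cup\loops_G(u)\bigr)\cap\bigl(E(G-w)\cup\loops_G(w)\bigr)$ equals $E(G-\{u,w\})\cup\loops_G(u)\cup\loops_G(w)$ and contains no $u$--$w$ edges at all, so it cannot ``correctly include the newly-formed loop-edges'' as you assert. Monotonicity of the closure operator alone cannot control the elements whose membership in the closure is created by adjoining $e$, and those are precisely the elements that make the lemma nontrivial.

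Some form of exchange is indispensable here, and this is where your proposal diverges from what a correct proof must do. The paper argues with cocircuits: for a non-loop edge $f$ of $G\con e$ at $z$, say incident in $G$ with $u$ but not $w$, condition (3) supplies a cocircuit $C_f\ni f$ consisting of edges at $u$ and a cocircuit $C_e\ni e$ consisting of edges at $w$; if $e\in C_f$, then since $f\notin C_e$ the strong cocircuit elimination axiom yields a cocircuit through $f$ avoiding $e$ and contained in the edges at $u$ or $w$, which certifies $f\notin\cl_{M\con e}(E((G\con e)-z))$. Alternatively, your closure-based setup can be repaired using the MacLane--Steinitz exchange property in place of the false containment: with $X=E(G-\{u,w\})$, condition (3) at $u$ and monotonicity give $f\notin\cl_M(X)$, so if $f\in\cl_M(X\cup\{e\})$ then exchange gives $e\in\cl_M(X\cup\{f\})\subseteq\cl_M(E(G-w))$, contradicting (3) at $w$ because $f$ is not incident with $w$. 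Either repair works; as written, however, your handling of the contracted vertex is a genuine error rather than a gap. (A minor further point: your condition (2) computation assumes contraction of $e$ drops ranks by one, i.e.\ that $e$ is not a loop of $M$; this is true but needs justification, e.g.\ from (3) at an endpoint of $e$ or from Lemma~\ref{forest}.)
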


\begin{proof}
Conditions $(1)$ and $(2)$ are clearly satisfied.
Let $u$ and $v$ be the ends of $e$ in $G$, and let $f$
be an edge of $G$ that is incident with $u$ but not with 
$v$. To prove $(3)$ it suffices to prove that
that there exists a cocircuit $C$ in $M$
such that $f\in C$, $e\not\in C$, and $C$ contains
only edges incident with either $u$ or $v$.

By $(3)$, there exist cocircuits $C_e$ and $C_f$ such that
$e\in C_e$, that $C_e$ contains only edges incident with $v$, that
$f\in C_f$, and that $C_f$ contains only edges incident with $u$.
We may assume that $e\in C_f$ since otherwise we could take $C=C_f$.
Since $f$ is not incident with $v$, we have $f\not\in C_e$.
Then, by the strong circuit exchange axiom, there is a 
cocircuit $C$ of $M$ with $f\in C\subseteq (C_1\cup C_2)-\{e\}$,
as required.
\end{proof}

Let $e$ be a loop-edge of a graph $G$ and let $v$ be the vertex 
incident with $v$. We denote by $G\circ e$ the graph obtained from $G-v$ by first,
for each non-loop edge $f=vw$ incident with $v$
adding $f$ as a loop-edge at $w$, and then
for each loop-edge $f$ of $G-e$ at $v$ adding
$f$ as a loop-edge on an arbitrary vertex.
The graph $G\circ e$ is well-defined unless there
are multiple loop-edges at $v$. 
\begin{lemma}\label{contractloop}
Let $G$ be a weak framework for $M$ and let $e$
be a loop-edge of $G$.
If $e$ is not a loop of $M$, then
$G\circ e$ is a weak framework for $M\con e$.
\end{lemma}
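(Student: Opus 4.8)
The plan is to verify conditions (1), (2), and (3) directly for the pair $(G\circ e,\, M\con e)$, using throughout the two standard contraction identities for a non-loop element $e$: for every $X\subseteq E(M)-e$ one has $r_{M\con e}(X)=r_M(X\cup e)-1$ and $\cl_{M\con e}(X)=\cl_M(X\cup e)-e$. It is exactly the $-1$ in the first identity that uses the hypothesis that $e$ is not a loop of $M$; the second identity holds regardless. Before starting I would fix notation for the edges meeting $v$: let $A=E(G-v)$ be the edges missing $v$, let $B$ be the non-loop edges at $v$ (each $f=vw\in B$ becomes a loop at $w$ in $G\circ e$), and let $L$ be the loop-edges at $v$ other than $e$ (each relocated to some vertex). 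Then $E(G\circ e)=A\cup B\cup L=E(G)-e=E(M\con e)$ and $V(G\circ e)=V(G)-v$, so condition (1) is immediate. The only edges whose endpoints move are those of $B\cup L$, and every such edge has $v$ as an endpoint in $G$.

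For condition (2), let $H$ be a component of $G\circ e$; I must show $r_{M\con e}(E(H))\le |V(H)|$, equivalently $r_M(E(H)\cup e)\le |V(H)|+1$. The key observation is that every endpoint \emph{in $G$} of an edge of $E(H)$ lies in $V(H)\cup\{v\}$: an edge of $A$ keeps its endpoints, which lie in $V(H)$; an edge of $B$ or $L$ in $E(H)$ has its relocation vertex in $V(H)$ and has $v$ as its other $G$-endpoint. Hence $G[E(H)\cup e]$ has at most $|V(H)|+1$ vertices. Applying Lemma~\ref{restriction} to $E(H)\cup e$ shows $G[E(H)\cup e]$ is a weak framework for $M|(E(H)\cup e)$; condition (2) for this framework bounds the rank of each of its components by that component's vertex count, and subadditivity of the rank then gives $r_M(E(H)\cup e)\le |V(G[E(H)\cup e])|\le |V(H)|+1$, as required.

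For condition (3), fix a vertex $w$ of $G\circ e$, so $w\neq v$. First I would record the identity $E((G\circ e)-w)\cup e=E(G-w)-L_w$, where $L_w\subseteq L$ is the set of loops relocated to $w$: deleting $w$ from $G\circ e$ removes precisely the edges incident with $w$ in $G$ together with $L_w$, while $e$ meets $v\neq w$. In particular $E((G\circ e)-w)\cup e\subseteq E(G-w)$. Now suppose for contradiction that some $g\in\cl_{M\con e}(E((G\circ e)-w))$ lies outside $E((G\circ e)-w)\cup\loops_{G\circ e}(w)$; then $g$ is a non-loop edge at $w$ in $G\circ e$. Since the construction creates only loop-edges, $g$ must be a non-loop edge of $A$ at $w$, hence a non-loop edge at $w$ in $G$. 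By the closure identity and monotonicity, $\cl_{M\con e}(E((G\circ e)-w))\subseteq \cl_M(E(G-w))-e$; but condition (3) for $G$ at $w$ gives $g\notin\cl_M(E(G-w))$, since $g$ is a non-loop edge at $w$. This contradiction establishes condition (3).

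The main obstacle is bookkeeping rather than depth: I must track precisely how the non-loop edges $B$ become loops and how the stray loops $L$ are relocated to arbitrary vertices, and confirm that these relocations are harmless. For condition (2) this is handled by the endpoint-containment observation, since a relocated loop contributes only the vertex $v$ among the $G$-endpoints; for condition (3) it is handled by the inclusion $E((G\circ e)-w)\cup e\subseteq E(G-w)$ together with the fact that relocation produces only loops and never new non-loop edges. Because neither argument depends on which vertices receive the relocated loops of $L$, the conclusion holds for every graph $G\circ e$ produced by the construction, in particular in the cases where it is not uniquely determined.
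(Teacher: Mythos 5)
Your proof is correct, and it follows the same overall plan as the paper's---verify conditions (1)--(3) directly, using the contraction identities $r_{M\con e}(X)=r_M(X\cup\{e\})-1$ and $\cl_{M\con e}(X)=\cl_M(X\cup\{e\})-\{e\}$ together with Lemma~\ref{restriction}---but it differs in the one step where the paper does real work, namely condition (3). The paper first shows, via Lemma~\ref{restriction} and condition (2), that $r_M(\loops_G(v))=1$, so that every element of $\loops_G(v)-\{e\}$ is a loop of $M\con e$; this loop claim is exactly what is needed to prove the \emph{equality} $\cl_{M}(E(G-w)) = \cl_{M\con e}(E((G\circ e)-w))\cup \{e\}$, because deleting the relocated loop-edges $L_w$ from a spanning set fails to shrink the closure only because those elements lie in $\cl_M(\{e\})$. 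You sidestep this entirely: condition (3) only requires one inclusion, and that inclusion follows from monotonicity of closure applied to $E((G\circ e)-w)\cup\{e\}=E(G-w)-L_w\subseteq E(G-w)$, combined with the observation that the construction of $G\circ e$ creates only loop-edges, so any offending element would already violate condition (3) for $G$ at $w$. Your route is thus slightly more elementary---no rank computation at $v$ is needed---at the cost of not recording the fact, made explicit in the paper, that the relocated edges are loops of $M\con e$ (which is the strongest sense in which their arbitrary placement is harmless). Your detailed verification of condition (2), by applying Lemma~\ref{restriction} to $E(H)\cup\{e\}$ and summing the rank bound over components, is also correct; the paper dismisses that step as routine.
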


\begin{proof}
Let $v$ be the vertex incident with $e$.
Condition $(1)$ is clearly satisfied and
condition $(2)$ is also routine.
By Lemma~\ref{restriction} and (2), we have $r_M(\loops_G(v)) = 1$,
so each element of $\loops_G(v)-\{e\}$ is a loop in $M\con e$.
Each vertex $w\in V(G)-\{v\}$ is incident with the same edges
in $G$ as it is in $H$ except for the elements in $\loops_G(v)$.
Moreover, $\cl_{M}(E(G-w)) = \cl_{M\con e}(E(H-w))\cup \{e\}$.
Therefore $(3)$ follows.
\end{proof}

\section{Balanced cycles}

Let $G$ be a weak framework for a matroid $M$ and let $C$ be a cycle of $G$.
By Lemmas~\ref{subgraph} and~\ref{forest},
$E(C)$ is either independent in $M$ or $E(C)$ is a circuit in $M$.
If $E(C)$ is a circuit of $M$, then we say
that $C$ is a {\em balanced} cycle of $(M,G)$;
when the matroid $M$ is clear from the context, we will say that
$C$ is a {\em balanced} cycle of $G$. We recall that $M(G)$ 
denotes the cycle matroid of a graph $G$.
\begin{lemma}\label{balanced}
Let $G$ be a weak framework for a matroid $M$.
Then $M=M(G)$ if
and only if each cycle of $G$ is balanced.
\end{lemma}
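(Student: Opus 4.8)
The plan is to handle the two implications separately. The forward implication is immediate: if $M=M(G)$, then by definition of the cycle matroid the circuits of $M$ are precisely the edge-sets of the cycles of $G$, so every cycle of $G$ is balanced. The content lies in the converse, for which the strategy is to reduce everything to Theorem~\ref{strong-graphic}. That theorem gives $M=M(G)$ as soon as $r(M)\le |V(G)|-c$, where $c$ is the number of components of $G$; so, assuming every cycle of $G$ is balanced, the whole task becomes establishing this single rank inequality.

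To prove the inequality I would work one component at a time. Fix a component $H$ and a spanning tree $T$ of $H$. By Lemma~\ref{forest}, $E(T)$ is independent, so $r_M(E(T))=|V(H)|-1$. I then claim $E(H)\subseteq \cl_M(E(T))$. For a non-loop edge $e\in E(H)\setminus E(T)$, the graph $T+e$ contains a unique cycle $C$ through $e$; since $C$ is balanced, $E(C)$ is a circuit of $M$, whence $e\in\cl_M(E(C)\setminus\{e\})\subseteq\cl_M(E(T))$. For a loop-edge $e$ of $H$, the single-edge cycle $\{e\}$ is balanced, so $\{e\}$ is a circuit of $M$, i.e.\ $e$ is a loop of $M$ and $e\in\cl_M(\emptyset)\subseteq\cl_M(E(T))$. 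Hence $r_M(E(H))=r_M(E(T))=|V(H)|-1$. Summing over the components and using subadditivity of the matroid rank function gives $r(M)\le\sum_H r_M(E(H))=\sum_H(|V(H)|-1)=|V(G)|-c$, and Theorem~\ref{strong-graphic} then yields $M=M(G)$.

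I expect no serious obstacle here; the proof is short once the reduction to Theorem~\ref{strong-graphic} is spotted. The two points that require a little care are treating loop-edges correctly (a loop-edge is a cycle of length one, so balancedness forces it to be a loop of $M$), and remembering that the passage from the per-component ranks to $r(M)$ relies on subadditivity rather than additivity of the rank function. Neither causes real difficulty, so the main step is simply assembling the component-wise rank computation in order to trigger the strong form of the graphic theorem.
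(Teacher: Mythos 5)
Your proof is correct and follows essentially the same route as the paper: the paper takes a maximal forest $F$ of $G$, notes that balancedness of all cycles makes $E(F)$ a basis (hence $r(M)\le |V(G)|-c$), and invokes the graphic theorem, which is exactly your component-wise spanning-tree argument in condensed form. If anything, your version is slightly more careful, since it spells out the loop-edge case and correctly cites Theorem~\ref{strong-graphic} (the weak-framework version), whereas the paper cites Theorem~\ref{graphic}, which as stated assumes a framework.
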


\begin{proof}
If $M=M(G)$, then each cycle of $G$ is balanced.
Conversely, suppose that each cycle of $G$ is balanced.
Let $F$ be a maximal forest in $G$.
Since each cycle is balanced, $E(F)$ is a basis of $M$.
Then, by Theorem~\ref{graphic}, $M=M(G)$.
\end{proof}

A  {\em theta} is a 
$2$-connected graph that has exactly two vertices of degree
$3$ and all other vertices have degree $2$. 
Observe that there are exactly three cycles in a theta.

\begin{lemma}\label{theta}
Let $G$ be a weak framework for a matroid  $M$ and let $H$
be a theta-subgraph of $G$. If two of the cycles in $H$ are
balanced, then so too is the third.
\end{lemma}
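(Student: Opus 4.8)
The plan is to put coordinates on the theta and reduce the statement to a single rank computation. Write $a,b$ for the two degree-$3$ vertices of $H$; then $H$ is the union of three internally disjoint $a$--$b$ paths $P_1,P_2,P_3$, and its three cycles are $C_1=P_2\cup P_3$, $C_2=P_1\cup P_3$, and $C_3=P_1\cup P_2$. Since $E(P_1),E(P_2),E(P_3)$ are pairwise disjoint, we have $E(C_i)\cup E(C_j)=E(H)$ for $i\ne j$ and $E(C_1)\cap E(C_2)=E(P_3)$. Relabelling if necessary, I assume that $C_1$ and $C_2$ are the balanced cycles, so that $E(C_1)$ and $E(C_2)$ are circuits of $M$, and I must show that $E(C_3)$ is a circuit.

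First I would pin down the rank of $E(H)$. On the one hand, $H$ is connected, so it has a spanning tree $T$; by Lemma~\ref{forest} the set $E(T)$ is independent, giving $r_M(E(H))\ge |V(H)|-1$. On the other hand, $P_3$ is a forest, so $r_M(E(P_3))=|E(P_3)|$ by Lemma~\ref{forest}, and submodularity applied to $E(C_1)$ and $E(C_2)$ (whose ranks are $|E(C_1)|-1$ and $|E(C_2)|-1$ since they are circuits, whose union is $E(H)$, and whose intersection is $E(P_3)$) yields $r_M(E(H))\le |V(H)|-1$. Hence $r_M(E(H))=|V(H)|-1$.

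Next I would delete a single edge and rebuild. Fix $e\in E(P_3)$; since $e$ lies on the circuit $C_1$ we have $e\in\cl_M(E(C_1)\setminus\{e\})\subseteq\cl_M(E(H)\setminus\{e\})$, so $r_M(E(H)\setminus\{e\})=r_M(E(H))=|V(H)|-1$. The graph $H-e$ is connected, has $|V(H)|$ vertices and $|V(H)|$ edges, and its unique cycle is $C_3$. I would then recover the rank of $E(H)\setminus\{e\}$ starting from $E(C_3)$ and adding the edges of $E(P_3)\setminus\{e\}$ one at a time: these edges form two subpaths of $P_3$ hanging off $C_3$ at $a$ and at $b$, so they can be ordered so that each is a pendant edge (introducing a new degree-one vertex) at the moment it is added. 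By Lemma~\ref{restriction} every intermediate subgraph is a weak framework for the corresponding restriction of $M$, so by the degree-one case of Lemma~\ref{deletevertex} each such addition raises the rank by exactly one. Thus $r_M(E(H)\setminus\{e\})=r_M(E(C_3))+(|E(P_3)|-1)$, and since also $r_M(E(H)\setminus\{e\})=|V(H)|-1=(|E(C_3)|-1)+(|E(P_3)|-1)$, we conclude $r_M(E(C_3))=|E(C_3)|-1$. Hence $E(C_3)$ is a circuit and $C_3$ is balanced.

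The main obstacle I anticipate is the first step: the bound $r_M(E(H))\le|V(H)|-1$ genuinely uses \emph{both} balanced cycles, since a single circuit does not rule out $r_M(E(H))=|V(H)|$, and the submodular bookkeeping together with the pairwise disjointness of the $E(P_i)$ must be handled with care. The pendant-edge computation is routine but requires attention to the ordering of the additions and to the degenerate case $|E(P_3)|=1$, in which $E(P_3)\setminus\{e\}$ is empty and $H-e$ is exactly $C_3$.
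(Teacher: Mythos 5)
Your proof is correct, but its second half takes a genuinely different route from the paper's. Both arguments pivot on the same key fact: two balanced cycles force $r_M(E(H))\le |E(H)|-2=|V(H)|-1$ (the paper gets this from the observation that a set containing two distinct circuits has nullity at least two; your submodularity computation, using $r_M(E(P_3))=|E(P_3)|$ from Lemma~\ref{forest}, proves the same bound). From there the paper is done in one line: $H=G[E(H)]$ is a connected weak framework for $M|E(H)$ by Lemma~\ref{restriction}, so Theorem~\ref{strong-graphic} gives $M|E(H)=M(H)$, and Lemma~\ref{balanced} then makes every cycle of $H$ balanced. You instead avoid Theorem~\ref{strong-graphic} entirely and compute $r_M(E(C_3))$ by hand: delete $e\in E(P_3)$ (rank unchanged, since $e$ lies on the circuit $E(C_1)$), then rebuild $H-e$ from $C_3$ by pendant edges, each raising the rank by exactly one via Lemmas~\ref{restriction} and~\ref{deletevertex}. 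This is more work but more self-contained: it uses only the elementary lemmas plus submodularity, in effect re-proving the one special case of Theorem~\ref{strong-graphic} that is needed, whereas the paper trades length for reliance on machinery it has already built. One small point you should make explicit at the end: $r_M(E(C_3))=|E(C_3)|-1$ says only that $E(C_3)$ is dependent; to conclude that it is a circuit (hence that $C_3$ is balanced) invoke the dichotomy stated just before Lemma~\ref{balanced} --- the edge set of a cycle in a weak framework is either independent or a circuit --- or simply note that every proper subset of $E(C_3)$ is the edge set of a forest and so is independent by Lemma~\ref{forest}.
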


\begin{proof}
If there are two balanced cycles in $H$ then
$r_M(E(H)) \le |E(H)|-2= |V(H)|-1$.
So, by Theorem~\ref{graphic}, $M|E(H)=M(H)$ and, by 
Lemma~\ref{balanced}, all cycles of $H$ are balanced.
\end{proof}

The following result describes the circuits of a matroid
in terms of its weak framework; first we will give an
unusual example to demonstrate one of the outcomes.
If $M$ consists of a single circuit and $G$ is a graph
with $E(G)=E(M)$ whose components are cycles,
then $G$ is a weak framework for $M$.
\begin{lemma}\label{circuit}
Let $G$ be a weak framework for a matroid $M$.
If $C$ is a circuit of $M$, then either
\begin{itemize}
\item $G[C]$ is a balanced cycle,
\item $G[C]$ is a connected graph with minimum degree at least two,
$|C| = |V(C)| +1$, and
$G[C]$ has no balanced cycles, or
\item $G[C]$ is a collection of vertex-disjoint non-balanced cycles.
\end{itemize} 
\end{lemma}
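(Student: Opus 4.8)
The plan is to study $G[C]$ through the matroid $M|C$, which consists of a single circuit of rank $|C|-1$. First I would record, via Lemma~\ref{restriction}, that $G[C]$ is a weak framework for $M|C$; writing $H_1,\dots,H_k$ for its components, each $H_i$ is then a weak framework for $M|E(H_i)$ by Lemma~\ref{component}. All structural information about the three outcomes should follow from an edge-count versus vertex-count analysis of $G[C]$ together with the minimality of the circuit $C$.

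The first key step is to show that $G[C]$ has minimum degree at least two. A vertex $v$ of degree one would be incident with a single non-loop-edge $e$, and Lemma~\ref{deletevertex} applied to the weak framework $G[C]$ for $M|C$ would give $r_{M|C}(C\setminus\{e\})=r(M|C)-1=|C|-2$; but $C\setminus\{e\}$ is a proper subset of the circuit $C$ and so independent, giving $r_{M|C}(C\setminus\{e\})=|C|-1$, a contradiction (there are no isolated vertices in $G[C]$ by definition). From minimum degree two and the handshake inequality I obtain $|V(C)|\le|C|$. For the reverse estimate I would combine condition $(2)$ for each component with subadditivity of matroid rank: $|C|-1=r(M|C)\le\sum_i r_M(E(H_i))\le\sum_i|V(H_i)|=|V(C)|$. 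Hence $|V(C)|\le|C|\le|V(C)|+1$, and the two cases $|C|=|V(C)|$ and $|C|=|V(C)|+1$ will produce the three outcomes.

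In the case $|C|=|V(C)|$, the inequalities force $\sum_i\bigl(|E(H_i)|-|V(H_i)|\bigr)=0$ with each summand nonnegative (minimum degree two), so every component satisfies $|E(H_i)|=|V(H_i)|$; a connected graph of minimum degree two with as many edges as vertices is a single cycle, a loop counting as a length-one cycle. Thus $G[C]$ is a vertex-disjoint union of cycles: if there is a single component it equals $C$, hence is a balanced cycle (first outcome); if there are at least two, each cycle is a proper subset of the circuit $C$, so independent and non-balanced (third outcome). In the case $|C|=|V(C)|+1$, the same inequalities force $r_M(E(H_i))=|V(H_i)|$ for every $i$ and $\sum_i\bigl(|E(H_i)|-|V(H_i)|\bigr)=1$, so exactly one component has nullity one. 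That component then contains a circuit of $M$, which by minimality of $C$ must be all of $C$; therefore $G[C]$ has a single component and is connected, with $|C|=|V(C)|+1$. Any balanced cycle inside $G[C]$ would be a circuit of $M$ with strictly fewer than $|C|$ edges, again contradicting minimality, so $G[C]$ has no balanced cycles (second outcome).

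The main obstacle I anticipate is the connectivity claim in the second outcome: the counting alone only controls the total excess of edges over vertices, and one must convert the nullity-one computation into the stronger statement that the unique dependent component already exhausts $C$. The careful bookkeeping with loop-edges is the other delicate point, namely checking that a loop behaves as a degree-two, length-one cycle and is balanced precisely when it is a loop of $M$, so that loops fit uniformly into all three outcomes.
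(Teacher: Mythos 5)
Your proof is correct. It rests on the same basic ingredients as the paper's own argument --- Lemma~\ref{restriction} to view $G[C]$ as a weak framework for $M|C$, minimality of the circuit $C$, and the weak-framework conditions $(2)$ and $(3)$ --- but it organizes them along a genuinely different route. The paper splits at once on whether $|C|>|V(C)|$: in that case Lemma~\ref{dependentset} together with circuit minimality immediately forces $G[C]$ to be connected, of minimum degree two, with $|C|=|V(C)|+1$; in the case $|C|\le |V(C)|$ it shows each component is a cycle by locating a degree-one vertex and deriving from condition $(3)$ that its edge would be a coloop of $M|C$. You instead establish minimum degree two globally first, via Lemma~\ref{deletevertex} applied to $G[C]$ (which is the paper's coloop argument packaged as a lemma), and then trap the edge count with a two-sided bound $|V(C)|\le |C|\le |V(C)|+1$, the lower bound by handshake and the upper bound by rank subadditivity over the components together with condition $(2)$ --- a bound the paper never states explicitly, since minimality does that work there. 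Finally, your connectivity argument in the case $|C|=|V(C)|+1$ (exactly one component has positive nullity, so it contains a circuit, which by minimality must be all of $C$) replaces the paper's direct appeal to minimality of sets with $|E(H)|>|V(H)|$; the two arguments have the same core but yours makes the bookkeeping explicit. What your organization buys is uniformity --- one degree argument serving every case and a transparent numerical trichotomy; what the paper's buys is brevity, since Lemma~\ref{dependentset} plus minimality does triple duty for the degree bound, the edge count, and connectivity.
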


\begin{proof}
We may assume that $G[C]$ is not a balanced cycle, and, hence,
that $G[C]$ contains no balanced cycle.
Next suppose that $|C|\ge |V(C)| +1$.
By Lemma~\ref{dependentset}, $C$ is minimal with this property.
Hence $G[C]$ is connected, the minimum degree of $G[C]$ is two,
and $|C|= |V(C)| +1$. 
Now suppose that $|C|\le |V(C)|$ and
consider a component $H$ of $G[C]$; it suffices to show that
$H$ is a cycle.
By Lemma~\ref{dependentset} and the argument above, we may
assume that $|E(H)|\le |V(H)|$.
If $H$ is not a cycle there is a degree-one vertex $v$ of $H$. 
Moreover, the edge $e$ that is incident with $v$ is not a loop-edge.
Then, by $(3)$, the element $e$ is a coloop of $M|C$, which contradicts
the fact that $C$ is a circuit.
\end{proof}

For a set $X$ of elements in a matroid $M$ we let 
$$ \lambda_M(X) = r_M(X) + r_M(E(M)-X) - r(M).$$
\begin{lemma}\label{conn}
Let $G$ be a weak framework for $M$. If $H$ is a component
of $G$, then $\lambda_M(E(H))\le 1$.
\end{lemma}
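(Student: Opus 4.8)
The plan is to show that $\lambda_M(E(H)) \le 1$ by bounding the ranks of $E(H)$ and its complement separately and then combining them. Write $X = E(H)$ and $Y = E(M) - X = E(G - V(H))$, since $H$ is a component and so every edge not in $H$ lives in the rest of the graph $G - V(H)$. By condition $(2)$ we have $r_M(X) \le |V(H)|$. The graph $G - V(H)$ is a weak framework for $M | Y$ by Lemma~\ref{restriction} (or by applying Lemma~\ref{component} to the other components), so condition $(2)$ applied to it gives $r_M(Y) \le |V(G)| - |V(H)|$.

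The key step is to bound $r(M)$ from below in terms of $|V(G)|$. The natural lower bound comes from a maximal forest $F$ of $G$: by Lemma~\ref{forest}, $E(F)$ is independent in $M$, so $r(M) \ge |E(F)| = |V(G)| - c$, where $c$ is the number of components of $G$. Combining the three estimates yields
\begin{equation*}
\lambda_M(X) = r_M(X) + r_M(Y) - r(M) \le |V(H)| + (|V(G)| - |V(H)|) - (|V(G)| - c) = c.
\end{equation*}
This gives $\lambda_M(E(H)) \le c$, which is the correct idea but too weak when $G$ has several components.

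The main obstacle, then, is sharpening the lower bound on $r(M)$ so that the final bound is exactly $1$ rather than $c$. The fix is to be more careful about how the forest interacts with $X = E(H)$: rather than using a global maximal forest, I would choose a maximal forest $F$ of $G$ that restricts to a spanning forest on $H$ and on each other component separately, and track ranks component by component. Concretely, each component $H'$ of $G$ satisfies $r_M(E(H')) \ge |V(H')| - 1$ by Lemma~\ref{forest}, so summing over the components other than $H$ gives $r_M(Y) \ge |V(G)| - |V(H)| - (c - 1)$, and likewise $r_M(X) \ge |V(H)| - 1$. The cleanest route is to apply Lemma~\ref{subgraph} to the subgraph $G - V(H)$: it gives $|V(G-V(H))| - r(M|Y) \ge |V(G)| - r(M)$, i.e. $r(M) - r_M(Y) \ge |V(H)|$. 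Hence $r_M(X) - (r(M) - r_M(Y)) \le |V(H)| - |V(H)| = 0$, which after rearranging is exactly $\lambda_M(E(H)) = r_M(X) + r_M(Y) - r(M) \le r_M(X) - |V(H)| + 1 \le 1$, using $r_M(X) \le |V(H)|$ from $(2)$ together with the Lemma~\ref{subgraph} estimate. Thus the entire argument reduces to combining condition $(2)$ on $H$ with Lemma~\ref{subgraph} applied to the complementary subgraph, and I expect no genuine difficulty once the bookkeeping is set up this way.
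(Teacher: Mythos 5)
Your proposed fix has a genuine gap: Lemma~\ref{subgraph} is stated only for a \emph{connected} weak framework $G$, but in Lemma~\ref{conn} the only interesting case is precisely when $H$ is a proper component of $G$, so that $G$ is disconnected and Lemma~\ref{subgraph} cannot be invoked (if $G$ is connected, then $H=G$ and the claim is trivial since $\lambda_M(E(G))=0$). Worse, the inequality you extract from it, $r(M)-r_M(Y)\ge |V(H)|$, is simply false for disconnected weak frameworks. Take $M=U_{n-1,n}$ (a single circuit) and let $G$ be the disjoint union of two cycles $C_1,C_2$ with $a$ and $b$ edges, $a+b=n$; the paper notes just before Lemma~\ref{circuit} that this $G$ is a weak framework for $M$. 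With $H=C_1$ we get $r(M)-r_M(Y)=(a+b-1)-b=a-1=|V(H)|-1$, not $|V(H)|$. Indeed your inequality would yield $\lambda_M(E(H))\le 0$, whereas in this example $\lambda_M(E(H))=a+b-(a+b-1)=1$, so the bound of Lemma~\ref{conn} is tight and no correct argument can give $0$. Your own write-up betrays the inconsistency: you first derive $r_M(X)-(r(M)-r_M(Y))\le 0$ and then restate the conclusion as $\lambda_M(E(H))\le r_M(X)-|V(H)|+1\le 1$, with the $+1$ appearing from nowhere.

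What is true is the off-by-one version $r_M(E(M)-E(H))\le r(M)-(|V(H)|-1)$, and this is exactly what the paper uses; but it comes from Lemma~\ref{deletevertex}, not Lemma~\ref{subgraph}. Delete the vertices of $H$ from $G$ one at a time, ordered so that each deleted vertex (except possibly the last) is still incident with a non-loop-edge of what remains of $H$ (for instance, peel off leaves of a spanning tree of $H$ first); by Lemma~\ref{deletevertex}, applied to the successive restrictions (which are weak frameworks by Lemma~\ref{restriction}), each of these $|V(H)|-1$ deletions drops the rank by at least one. Combining this with $r_M(E(H))\le |V(H)|$ from condition $(2)$ gives $\lambda_M(E(H))\le |V(H)|+\bigl(r(M)-|V(H)|+1\bigr)-r(M)=1$. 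So your first, ``too weak'' computation was sound as far as it went; the correct sharpening is the vertex-deletion argument, and the route through Lemma~\ref{subgraph} cannot be repaired because that lemma genuinely needs connectivity.
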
 

\begin{proof}
By Lemma~\ref{deletevertex},
$r(E(M)-E(H)) \le r(M) - (|V(H)|-1)$.
Hence $\lambda_M(E(H)) = r_M(E(H)) + r_M(E(M)-E(H)) - r(M)
\le |V(H)| + (r(M) - (|V(H)|-1)) -r(M) = 1.$
\end{proof}

A {\em loop-component} of a graph $G$ is a component
consisting of exactly one vertex and exactly one edge.
The mistake in the earlier version of this paper
was that the third outcome of the following 
lemma was overlooked. 
\begin{lemma}\label{conn2}
If $G$ is a weak framework for a $3$-connected matroid $M$
with $|M|\ge 4$ and $G$ has no isolated vertices,
then either
\begin{itemize}
\item[(a)] $G$ is connected, 
\item[(b)] $G$ has exactly two components one of which is a 
loop-component, or
\item[(c)] every component of $G$ is a loop-component. 
\end{itemize}
\end{lemma}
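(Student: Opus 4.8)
The plan is to study the components of $G$ by combining Lemma~\ref{conn} with the $3$-connectivity of $M$. Let $H_1,\dots,H_k$ be the components of $G$. Since $G$ has no isolated vertices, each $H_i$ has at least one edge. If $k=1$ then (a) holds, so I would assume $k\ge 2$ and aim to reach (b) or (c). By Lemma~\ref{conn}, $\lambda_M(E(H_i))\le 1$ for every $i$. On the other hand, since $M$ is $3$-connected with $|M|\ge 4$, every partition of $E(M)$ into two parts each of size at least two has connectivity at least $2$; so for each $i$ we must have $|E(H_i)|\le 1$ or $|E(M)-E(H_i)|\le 1$. Moreover, if two distinct components each had at least two edges, then one would lie in the complement of the other and both sides of the induced partition would have size at least two, contradicting $3$-connectivity; hence at most one component has two or more edges.

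Next I would show that every single-edge component is a loop-component. A component with exactly one edge is either a loop-component or a copy of $K_2$. In the $K_2$ case there is a degree-one vertex $u$ whose unique incident edge $e$ is a non-loop-edge, so Lemma~\ref{deletevertex} gives $r_M(E(G-u))=r(M)-1$. Since $E(G-u)=E(M)-\{e\}$, this says $e\notin\cl_M(E(M)-\{e\})$, i.e.\ $e$ is a coloop of $M$. But a coloop is a $1$-separation of a matroid with at least two elements, contradicting that $M$ is $3$-connected. Thus no component is a $K_2$, and every single-edge component is a loop-component.

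To finish I would split into two cases according to the dichotomy above. If no component has two or more edges, then every component has exactly one edge and hence is a loop-component, which is outcome (c). Otherwise exactly one component $H_j$ has at least two edges, so $|E(M)-E(H_j)|\le 1$; as the remaining $k-1\ge 1$ components together contain at least $k-1$ edges, this forces $k=2$ with the second component having exactly one edge, which by the previous paragraph is a loop-component. This is outcome (b). The one step that needs genuine care is the coloop argument: it is where the purely graph-theoretic degree-one condition of a weak framework must be converted, via Lemma~\ref{deletevertex}, into the matroid statement that $e$ is a coloop, which then clashes with $3$-connectivity. The remaining steps are a routine combination of Lemma~\ref{conn} with the definition of $3$-connectivity.
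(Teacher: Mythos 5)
Your proof is correct and takes essentially the same route as the paper: both arguments combine Lemma~\ref{conn} with $3$-connectivity to show that at most one component can have two or more edges, and both use the coloop argument (yours via Lemma~\ref{deletevertex}, the paper's directly via condition~(3)) to rule out $K_2$-components. The only difference is presentational — you enumerate all components and derive a per-component dichotomy, whereas the paper argues more tersely by contradiction from a non-loop-component whose complement has at least two edges.
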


\begin{proof}
It follows from $(3)$  and the fact that 
$M$ has no coloops that each component 
of $G$ is either a loop-component or
has at least two edges.
We may assume that $G$ has a non-loop-component, say $H$,
and we may further assume that $|E(G)-E(H)|\ge 2$.
Then, by Lemma~\ref{conn}, 
$(E(H), E(G)-E(H))$ is a $2$-separation. However
this contradicts the fact that  
$M$ is $3$-connected.
\end{proof}

That oversight turns out to be catastrophic. Let 
$M$ be a matroid and let $G$ be a graph, with 
$E(G)=E(M)$, whose components are all loop-components.
Then $G$ is a weak framework for $M$.
So all matroids admit weak frameworks!

To conclude this section we consider additional properties of
the weak frameworks that
satisfying outcomes~(a) and~(b) of Lemma~\ref{conn2}.
We will start by showing that, in case~(b),
$M$ is a coextension of a graphic matroid.

\begin{lemma}\label{xx}
Let $G$ be a weak framework, without isolated vertices,
for a connected matroid $M$.
If $G$ has exactly two components and $e$ is an edge in a 
loop-component of $G$, then $M\con e = M(G\del e)$.
Moreover $M$ has a connected weak framework.
\end{lemma}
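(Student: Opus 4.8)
The plan is to pin down the structure of $G$ and then reduce the first claim to Theorem~\ref{strong-graphic}. Write $L$ for the loop-component containing $e$; it consists of a single vertex $v$ and the single loop-edge $e$. Write $K$ for the other component of $G$. Since $G$ has no isolated vertices and exactly two components, each component carries an edge, so $|E(M)|\ge 2$; as $M$ is connected and $E(K)=E(M)\del e$ is a nonempty proper subset, connectivity gives $\lambda_M(E(K))\ge 1$. Writing this out as $r_M(E(K))+r_M(\{e\})-r(M)\ge 1$ and using the trivial bounds $r_M(E(K))\le r(M)$ and $r_M(\{e\})\le 1$, I would conclude that both bounds are tight: $r_M(\{e\})=1$ (so $e$ is not a loop of $M$) and $r_M(E(K))=r(M)$.

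With $e$ a non-loop loop-edge, Lemma~\ref{contractloop} applies, and since $v$ carries no non-loop edges and no loop-edge other than $e$, the graph $G\circ e$ is exactly $K$; thus $K$ is a weak framework for $M\con e$, it is connected, and $M(K)=M(G\del e)$ (the isolated vertex left by deleting $e$ does not affect the cycle matroid). The main step is a rank count: $r(M\con e)=r(M)-r_M(\{e\})=r(M)-1$, while condition~(2) applied to $K$ together with $r_M(E(K))=r(M)$ gives $r(M)\le |V(K)|$, hence $r(M\con e)\le |V(K)|-1$. Since $K$ has one component, Theorem~\ref{strong-graphic} yields $M\con e=M(K)=M(G\del e)$, proving the first assertion; in particular $r(M)=r(M\con e)+1=|V(K)|$.

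For the ``moreover'' statement I would construct a connected weak framework $G'$ for $M$ by relocating $e$: let $G'$ be $K$ with $e$ added as a loop-edge at an arbitrarily chosen vertex $w\in V(K)$. Then $G'$ is connected with $E(G')=E(M)$, and condition~(2) holds (with equality) since $r_M(E(M))=r(M)=|V(K)|=|V(G')|$. To verify condition~(3) at a vertex $u$, I would compare $G'$ with the original framework $G$: for $u\ne w$ the edge sets satisfy $E(G'-u)=E(K-u)\cup\{e\}=E(G-u)$ and $\loops_{G'}(u)=\loops_{G}(u)$, so condition~(3) for $G'$ at $u$ is literally condition~(3) for $G$ at $u$. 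For $u=w$, condition~(3) for $G$ at $w$ gives $\cl_M(E(K-w)\cup\{e\})\subseteq E(K-w)\cup\{e\}\cup\loops_K(w)$; since $E(G'-w)=E(K-w)$ and $\loops_{G'}(w)=\loops_K(w)\cup\{e\}$, monotonicity of closure yields $\cl_M(E(G'-w))\subseteq\cl_M(E(K-w)\cup\{e\})\subseteq E(G'-w)\cup\loops_{G'}(w)$, as needed.

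The step I expect to require the most care is the tight rank estimate forced by the connectivity of $M$: this is precisely where the ``connected'' hypothesis is used, and it is what supplies the inequality $r(M\con e)\le |V(K)|-1$ that makes Theorem~\ref{strong-graphic} applicable. The remaining delicate point is the bookkeeping of loop-edges when relocating $e$, in particular checking condition~(3) at the single vertex $w$ where the loop is newly placed.
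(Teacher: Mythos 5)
Your proof is correct. For the first assertion it follows the paper's own route almost exactly: both arguments contract the loop-edge via Lemma~\ref{contractloop} (noting that $G\circ e$ is just the non-loop component $K$), use condition (2) together with connectivity of $M$ to force the rank bound $r(M\con e)\le |V(K)|-1$, and then invoke Theorem~\ref{strong-graphic}; your tightness argument via $\lambda_M(E(K))\ge 1$ is just a more explicit version of the paper's one-line observation that connectivity gives $e\in\cl_M(E(K))$. The ``moreover'' half is where you genuinely diverge. You take the target graph $G'$ (namely $K$ with $e$ reattached as a loop-edge at a vertex $w$) and verify conditions (1)--(3) from scratch, the only nontrivial point being condition (3) at $w$, which you settle by monotonicity of closure. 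The paper instead builds an auxiliary coextension: it adds a coloop $f$ to $M$ and a corresponding edge $vw$ to $G$, observes that the resulting pair is a weak framework, and then contracts $f$ using Lemma~\ref{contract}, landing on exactly the same graph $G'$. The paper's trick recycles the already-proved minor operations and so avoids re-checking condition (3) by hand (at the cost of an unverified but easy claim that the extended pair is a weak framework); your version is self-contained and makes visible precisely where the closure axiom is used. Both are complete proofs of the lemma.
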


\begin{proof}
Let $H$ denote the component of $G$ that does not contain $e$.
By Lemma~\ref{contractloop},
$H$ is a connected weak framework for $M\con e$.
By Lemma~\ref{component}, we have $r_M(E(H)) \le |V(H)|$.
Since $M$ is connected, $e\in \cl_M(E(H))$.
Therefore $r(M\con e) \le |V(H)|-1$.
Then, by Theorem~\ref{strong-graphic}, we have
$M\con e = M(H)$.

Let $v$ be the vertex of $G$ incident with $e$ and let 
$w\in V(G)\del\{v\}$.  Construct a graph
$G^+$ from $G$ by adding a new edge $f$ with ends $v$ and $w$
and construct a new matroid $M^+$ by adding 
$f$ as a coloop to $M$. Note that
$G^+$ is a weak framework for $M^+$ and hence, by
Lemma~\ref{contract}.
$G^+\con f$ is a weak framework for $M^+\con f$.
Moreover, as $f$ is a coloop of $M^+$, we have $M^+\con f=M$.
Therefore $G^+\con e$ is a connected weak framework for $M$.
\end{proof}

The following result
shows that connected weak frameworks are in fact
frameworks; this implies the ``if" direction
of Theorem~\ref{certify}.
\begin{lemma}\label{circuit3}
Let $M$ be a matroid with at least four elements
and let $G$ be a connected weak framework for $M$.
If $C$ is a circuit of $M$, then $G[C]$ has at most 
two components.
\end{lemma}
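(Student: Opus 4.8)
The plan is to reduce, by contracting each cycle down to a single edge, to the situation where the components of $G[C]$ are loop-edges, and then to exploit condition~(3) together with Lemma~\ref{circuit}.

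First, by Lemma~\ref{circuit} I may assume that $G[C]$ is a disjoint union of non-balanced cycles $C_1,\dots,C_k$ (otherwise $G[C]$ has at most one component and we are done). Suppose for contradiction that $k\ge 3$. For each $i$ I would choose a spanning tree $T_i$ of $C_i$ and set $Y=\bigcup_i E(T_i)$; this is a forest, hence independent in $M$ by Lemma~\ref{forest}, and $Y\subsetneq C$. Applying Lemma~\ref{contract} repeatedly (every edge contracted is a non-loop edge of the current graph), $G\con Y$ is a connected weak framework for $M\con Y$. Each $C_i$ collapses to a single vertex $v_i$ carrying the surviving edge $\ell_i$ as a loop-edge; since $Y\cup\{\ell_i\}$ is a proper independent subset of the circuit $C$, the element $\ell_i$ is not a loop of $M\con Y$. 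Moreover $C\del Y=\{\ell_1,\dots,\ell_k\}$ is a circuit of $M\con Y$, and the vertices $v_1,\dots,v_k$ are distinct. Thus it suffices to derive a contradiction in the following situation: $G'$ is a connected weak framework for a matroid $M'$, and $\ell_1,\dots,\ell_k$ are loop-edges at distinct vertices whose union is a circuit, with $k\ge 3$.

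Let $F=\cl_{M'}(\{\ell_1,\dots,\ell_k\})$, a flat of rank $k-1$. The key observation is that $F$ contains no non-loop edge: if $g$ is a non-loop edge with an end $x$, then at most one $\ell_i$ sits at $x$, so the remaining loops lie in $E(G'-x)$ and still span $F$ (any $k-1$ of the $\ell_i$ form an independent spanning set of $F$); hence $F\subseteq\cl_{M'}(E(G'-x))$, and $g\in F$ would contradict condition~(3) at $x$. Next let $T$ be a spanning tree of $G'$ consisting of non-loop edges; by Lemma~\ref{forest}, $E(T)$ is independent, so $H:=\cl_{M'}(E(T))$ is a hyperplane. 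Since $r(M')\le |V(G')|$ by condition~(2) and $|E(T)|=|V(G')|-1$, submodularity gives
\[ r_{M'}(F\cap H)\ \ge\ r_{M'}(F)+r_{M'}(H)-r(M')\ \ge\ (k-1)+(|V(G')|-1)-|V(G')|\ =\ k-2\ \ge\ 1. \]
Therefore $F\cap H$ contains an element $z$ that is not a loop of $M'$; lying in $F$ it is a loop-edge, and lying in $H$ it satisfies $z\in\cl_{M'}(E(T))$.

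Finally I would choose a minimal $S\subseteq E(T)$ with $z\in\cl_{M'}(S)$; then $S\cup\{z\}$ is a circuit of $M'$ with $z\notin S$ and $S\ne\emptyset$. But $G'[S\cup\{z\}]$ is a forest $S$ together with a single loop-edge $z$, and this matches none of the three outcomes permitted by Lemma~\ref{circuit}: it is not a single balanced cycle, it is not connected of minimum degree at least two (a nonempty forest has a leaf distinct from the vertex carrying $z$), and it is not a disjoint union of cycles (a forest edge lies in a tree-component, which is not a cycle). This contradiction shows $k\le 2$. I expect the main obstacle to be identifying this reduction: the statement genuinely fails for two components, and what rescues the three-component case is precisely that after contracting the cycles to unbalanced loops, condition~(3) forbids any non-loop edge from entering the span of those loops while a rank count forces one of them into the span of a spanning tree -- a configuration Lemma~\ref{circuit} rules out. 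The delicate bookkeeping will be in verifying that the contraction keeps each $\ell_i$ a non-loop, keeps $C\del Y$ a circuit, and places the loops at distinct vertices.
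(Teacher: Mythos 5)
Your proof is correct, but it takes a genuinely different route from the paper's. After the common first step (Lemma~\ref{circuit} reduces to the case where $G[C]$ consists of $k\ge 3$ vertex-disjoint non-balanced cycles), the paper stays inside $M$ itself: it takes a shortest path $P$ joining two of the cycles $C_1,C_2$, notes that $E(C_1\cup C_2)$ is independent (being a proper subset of the circuit $C$ -- this is where $k\ge 3$ enters), deduces from Lemmas~\ref{dependentset} and~\ref{circuit} that the handcuff $E(C_1\cup C_2\cup P)$ is a circuit, and then applies strong circuit elimination to $C$ and this handcuff, keeping an edge of $P$ and eliminating an edge of $C_1$, to produce a circuit whose induced subgraph has a degree-one vertex, contradicting Lemma~\ref{circuit}. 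You instead pass to a minor: contracting a spanning forest of the cycles (justified edge-by-edge via Lemma~\ref{contract}) turns the configuration into $k\ge 3$ non-loop elements sitting as loop-edges at distinct vertices and forming a circuit; condition (3) then shows their closure $F$ contains no non-loop edge, while the submodular rank count against a spanning tree $T$ produces a non-loop element $z\in F\cap\cl_{M'}(E(T))$, giving a circuit $S\cup\{z\}$ (a nonempty forest plus one loop-edge) that again contradicts Lemma~\ref{circuit}. Both proofs bottom out in Lemma~\ref{circuit}; the paper's is shorter and needs only circuit exchange, whereas yours invokes more machinery (the contraction lemma plus a rank computation) but isolates cleanly where $k\ge 3$ is needed: your bound $r_{M'}(F\cap\cl_{M'}(E(T)))\ge k-2$ becomes vacuous exactly when $k=2$, matching the fact that two-component circuits genuinely occur. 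The delicate points you flagged all check out: each contracted edge lies in a forest and so remains a non-loop edge throughout, $C\setminus Y$ is a circuit of $M\con Y$ because $Y$ is an independent proper subset of $C$, and the $\ell_i$ stay non-loops (as $Y\cup\{\ell_i\}\subsetneq C$) at distinct vertices (as the cycles are vertex-disjoint).
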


\begin{proof}
Suppose that $G[C]$ has more than two components.
By Lemma~\ref{circuit}, each component
of $G[C]$ is a non-balanced cycle. 
Let $P$ be a shortest path connecting two components
of $G[C]$; let these components be $C_1$ and $C_2$.
Since $C$ is a circuit, $E(C_1\cup C_2)$ is
independent. By Lemmas~\ref{dependentset} and~\ref{circuit},
$E(C_1\cup C_2\cup P)$ is a circuit of $M$. 

Let $e\in E(P)$ and $f\in E(C_1)$. By the strong
exchange property for circuits, there is a circuit $C'$
of $G$ with $e\in C' \subseteq (C\cup E(P))-\{f\}$.
However this is inconsistent with the outcomes
of Lemma~\ref{circuit}. 
\end{proof}

Finally  we show that every connected weak framework
for a $3$-connected matroid is necessarily $2$-connected.
\begin{lemma}\label{conn4}
Let $M$ be a $3$-connected matroid with $|M|\ge 4$.
If $G$ is a connected weak framework for $M$, then 
$G$ is $2$-connected.
\end{lemma}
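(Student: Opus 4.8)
The plan is to prove that a connected weak framework $G$ for a $3$-connected matroid $M$ (with $|M| \ge 4$) must be $2$-connected, by contradiction. Suppose $G$ is connected but not $2$-connected. Then $G$ has a cut-vertex $v$, so that $G - v$ is disconnected. I would write $G - v$ as having components, and partition $E(G) - \loops_G(v)$ into two nonempty sets $A$ and $B$, where $A$ is the edge-set of one union of components of $G - v$ (together with the edges joining those components to $v$) and $B$ is the rest. The loop-edges at $v$ can be assigned to either side; let me absorb them into one of the parts. The goal is to show that this graph separation induces a low-order matroid separation, contradicting $3$-connectivity.

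The key tool is Lemma~\ref{conn} together with condition $(3)$. First I would control the rank interaction at the cut-vertex. The edges on the $A$-side that are incident with $v$, together with the edges on the $B$-side incident with $v$, all pass through the single vertex $v$; by condition $(3)$ the closure of $E(G - v)$ is contained in $E(G-v) \cup \loops_G(v)$, which tells us that edges incident to $v$ are ``nearly spanned'' by the rest. The main computation is to bound $\lambda_M(A)$. Using Lemma~\ref{subgraph} applied to the subgraphs $G[A]$ and $G[B]$, I get $|V(A)| - r_M(A) \ge |V(G)| - r(M)$ and similarly for $B$. Since $G[A]$ and $G[B]$ share only the vertex $v$, we have $|V(A)| + |V(B)| = |V(G)| + 1$. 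Combining these inequalities yields
\[
\lambda_M(A) = r_M(A) + r_M(B) - r(M) \le |V(A)| + |V(B)| - |V(G)| - \big(|V(G)| - r(M)\big) + \text{(corrections)},
\]
and the arithmetic should collapse to $\lambda_M(A) \le 1$, giving a $2$-separation of $M$ provided both $A$ and $B$ have at least two elements.

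The main obstacle is the bookkeeping around the cut-vertex $v$ and the loop-edges at $v$. The inequality $\lambda_M(A) \le 1$ on its own only rules out a matroid that is \emph{not} $3$-connected when the separation is \emph{exact}; I must ensure the separation is nontrivial, i.e.\ that neither side is too small, and this is where cases on the degree of $v$ and the presence of loops enter. If $v$ has loop-edges, then Lemma~\ref{restriction} forces $r_M(\loops_G(v)) = 1$, so the loops contribute essentially nothing to the rank and can be grouped with one side without affecting $\lambda$; I would handle that case by noting such loops sit in the closure of either side. The delicate point is when one side of the separation is very small (a single edge or a single bridge through $v$): here I would invoke Lemma~\ref{deletevertex} to show that deleting a degree-one vertex adjacent to $v$ drops the rank by exactly one, ruling out a trivial separation, or else directly produce the $2$-separation. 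In every subcase the conclusion $\lambda_M(A) \le 1$ with both sides of size at least two contradicts the hypothesis that $M$ is $3$-connected, completing the proof.
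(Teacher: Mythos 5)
Your proposal is correct, and its skeleton is the same as the paper's: decompose $G$ at a cut-vertex into two edge-disjoint parts meeting in a single vertex, show this induces a partition $(A,B)$ of $E(M)$ with $\lambda_M(A)\le 1$, conclude from $3$-connectivity that one side must consist of a single element, and then derive a contradiction from condition $(3)$ because that element is a pendant non-loop edge, hence a coloop. The genuine difference is how $\lambda_M(A)\le 1$ is obtained. The paper first observes that $M(G)$ is not $3$-connected, hence $M\neq M(G)$, so Theorem~\ref{graphic} forces $r(M)=|V(G)|$; the bound $\lambda_M(E(H_1))\le |V(H_1)|+|V(H_2)|-|V(G)|=1$ then follows from the plain condition-$(2)$ bounds $r_M(E(H_i))\le |V(H_i)|$. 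You instead apply Lemma~\ref{subgraph} to both sides, which gives $r_M(A)\le |V(A)|-(|V(G)|-r(M))$ and likewise for $B$, so that $\lambda_M(A)\le 1-(|V(G)|-r(M))\le 1$ directly; your ``corrections'' term is in fact zero, and the appeal to Theorem~\ref{graphic} and the intermediate fact $r(M)=|V(G)|$ become unnecessary. That is a small but real simplification: Lemma~\ref{subgraph} already encodes the deficiency $|V(G)|-r(M)$, whereas the paper's route keeps the arithmetic trivial at the cost of first pinning down $r(M)$. The only part of your writeup that needs tightening is the degenerate case (``ruling out a trivial separation, or else directly produce the $2$-separation'' is too loose): if $|A|=1$, its unique edge cannot be a loop at $v$, since $A$ must contain an edge joining $v$ to its component of $G-v$; so it joins $v$ to a degree-one vertex $u$, and Lemma~\ref{deletevertex} gives $r_M(E(G-u))=r(M)-1$, making that edge a coloop of $M$ and contradicting $3$-connectivity --- which is exactly the paper's endgame, phrased via condition $(3)$.
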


\begin{proof}
Suppose otherwise. Then there is a pair
$(H_1,H_2)$ of subgraphs of $G$ such that
$G=H_1\cup H_2$, $|V(H_1)\cap V(H_2)|=1$, and $|V(H_1)|,|V(H_2)|\ge 2$.
Note that $H_1$ and $H_2$ are both connected.
Now $M(G)$ is not $3$-connected, so, by Theorem~\ref{graphic},
$r(M) = |V(G)|$. Therefore
$\lambda_M(E(H_1)) \le |V(H_1)| + |V(H_2)| - |V(G)| = 1$.
Since $M$ is $3$-connected either
$|E(H_1)|\le 1$ or $|E(H_2)|\le 1$; we may assume that
$|E(H_1)|=1$. Let $e\in E(H_1)$. 
Since $H_1$ is connected and $|V(H_1)|\ge 2$, 
the edge $e$ is not a loop-edge.
Therefore, by $(3)$, $e$ is a coloop of $M$.
This contradicts the fact that $M$ is $3$-connected.
\end{proof}

\section{Quasi-graphic matroids}

The following result shows that the class of quasi-graphic matroids
is closed under taking minors.
\begin{lemma}\label{minors}
Let $G$ be a framework for a matroid $M$.
For each $e\in E(M)$,
\begin{itemize}
\item $G-e$ is a framework for $M\del e$,
\item if $e$ is not a loop-edge of $G$, then $G\con e$
is a framework for $M\con e$, and
\item if $e$ is a loop-edge of $G$ and $e$ is not a loop of $M$,
then $G\circ e$ is a framework for $M\con e$.
\end{itemize}
\end{lemma}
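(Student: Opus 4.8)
The plan is to leverage the fact that frameworks differ from weak frameworks only by condition~(4), so for each of the three operations I must verify that a weak framework property (already established by Lemmas~\ref{restriction}, \ref{contract}, and \ref{contractloop}) upgrades to a full framework by checking that condition~(4) survives the minor operation. Since $M$ is a matroid with framework $G$, I already know $G$ is a weak framework satisfying~(4). The key observation driving all three cases is that circuits of a minor are controlled by circuits of $M$: every circuit of $M\del e$ is a circuit of $M$, and every circuit of $M\con e$ is of the form $C-\{e\}$ where $C$ is a circuit of $M$ (or is itself a circuit of $M$ avoiding $e$).

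\textbf{Deletion.} For $G-e$ as a framework for $M\del e$, I would first invoke Lemma~\ref{restriction} with $X=E(M)-\{e\}$ to get that $G[E(M)-\{e\}]=G-e$ is a weak framework for $M\del e$ (modulo the handling of any vertex that becomes isolated, which does not affect the matroid). For condition~(4), let $C$ be a circuit of $M\del e$; then $C$ is also a circuit of $M$, so $G[C]$ has at most two components by condition~(4) applied to $G$. Since deleting $e$ does not change $G[C]$ when $e\notin C$, the bound on components is inherited directly.

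\textbf{Non-loop contraction.} For $G\con e$ as a framework for $M\con e$, Lemma~\ref{contract} already supplies the weak-framework conditions~(1)--(3). For~(4), let $C$ be a circuit of $M\con e$. Then either $C$ is a circuit of $M$ avoiding $e$, or $C\cup\{e\}$ contains a circuit $C'$ of $M$ with $e\in C'$ and $C=C'-\{e\}$. In the first case $(G\con e)[C]$ is obtained from $G[C]$ by possibly identifying the two ends of $e$, which cannot increase the number of components. In the second case $(G\con e)[C]=(G[C'])\con e$, and contracting a single edge of $G[C']$ again cannot increase the component count beyond what condition~(4) guarantees for $G[C']$. \textbf{This case is the main obstacle}, because I must argue carefully that contraction never splits a component into more pieces---which is geometrically clear but needs a clean statement---and that the two-component bound from~(4) for $C'$ transfers to $C'-\{e\}$ after identification; the subtlety is that contracting $e$ could in principle merge what were distinct vertices, so I need the standard fact that edge contraction is a continuous-like operation on the component lattice that only merges components.

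\textbf{Loop contraction.} For $G\circ e$ as a framework for $M\con e$, Lemma~\ref{contractloop} gives conditions~(1)--(3). For~(4), I again take a circuit $C$ of $M\con e$, lift it to a circuit of $M$, and track how the surgery defining $G\circ e$ (re-attaching edges incident with the loop-vertex $v$ as loops elsewhere) affects $G[C]$. Since a loop-edge is its own component in any edge-induced subgraph, and the re-attachment relocates edges to existing vertices rather than creating new vertices, the number of components of $(G\circ e)[C']$ is at most that of $G[C']$; combined with condition~(4) for $G$ this yields the required bound. In all three cases the argument reduces to the monotonicity of component count under these elementary graph operations together with the direct transfer of~(4), so once the non-loop contraction case is handled cleanly the remaining two follow by the same template.
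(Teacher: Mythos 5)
Your treatment of deletion and of contraction of a non-loop-edge is correct and is essentially the paper's own argument: circuits of $M\del e$ are circuits of $M$ with $G[C]$ untouched, and for $M\con e$ you lift a circuit $C$ to a circuit $C'\in\{C,C\cup\{e\}\}$ of $M$ and use the fact that contracting an edge only identifies vertices, so it can merge components of an edge-induced subgraph but never split them. The ``subtlety'' you flag in that case is not really an obstacle; the case is routine.

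The loop-edge case, however, contains a genuine gap. The operation $G\circ e$ is not a contraction: it \emph{deletes} the vertex $v$ and re-attaches each edge formerly incident with $v$ as a loop-edge at some other vertex, and vertex deletion can split components. Your monotonicity claim --- that $(G\circ e)[C]$ has at most as many components as $G[C']$ --- is false, and both statements offered in its support are wrong: a loop-edge at $v$ is \emph{not} ``its own component'' when other edges of the subgraph meet $v$, and the fact that no new vertices are created is beside the point. For a concrete failure, let $C'$ be a circuit of $M$ with $G[C']$ equal to two cycles meeting exactly at $v$ (the second outcome of Lemma~\ref{circuit}; such circuits genuinely occur in frameworks carrying a loop-edge at $v$, for instance in frame matroids): $G[C']$ is connected, yet after the surgery it falls apart into two disjoint paths decorated with loop-edges, i.e.\ two components --- the count goes up. The desired bound of two components does still hold, but proving it requires exactly the structural information your proposal never invokes, namely Lemma~\ref{circuit}: as the paper argues, if $G[C']$ has two components then each is a non-balanced cycle, hence $2$-connected, so deleting $v$ from the one containing it and re-attaching its edges as loop-edges leaves that piece connected; and if $G[C']$ is connected, then it is a cycle or a connected graph of minimum degree two with $|C'|=|V(C')|+1$, and removing the single vertex $v$ from such a graph leaves at most two pieces. (Alternatively one can show that a circuit $C$ of $M\con e$ that is also a circuit of $M$ satisfies $e\notin\cl_M(C)$, which by condition (2) rules out the problematic shapes through $v$; but some argument of this kind is needed.) As written, your third bullet does not go through.
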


\begin{proof}
By Lemma~\ref{restriction}, $G-e$ is a 
weak framework for $M\del e$.
Moreover, $(4)$ is clearly preserved under 
deletion, so $G-e$ is a 
framework for $M\del e$.

Suppose that $e$ is a non-loop-edge of $G$.
By Lemma~\ref{contract}, $G\con e$ is a 
weak framework for $M\con e$.
Consider a circuit $C$ of $M\con e$.
Either $C$ or $C\cup \{e\}$ is a circuit of $M$.
Let $C'\in\{C,C\cup\{e\}\}$ be a circuit of $M$.
Then $G[C']$ has at
most two components, and, hence, $G\con e[C]$
has at most two components. 
So  $G\con e$ is a framework for $M\con e$.

Finally suppose that $e$ is a loop-edge of $G$ and that
$e$ is not a loop of $M$.
By Lemma~\ref{contractloop}, $G\circ e$ is a 
weak framework for $M\con e$.
Consider a circuit $C$ of $M\con e$.
Either $C$ or $C\cup \{e\}$ is a circuit of $M$.
Let $C'\in\{C,C\cup\{e\}\}$ be a circuit of $M$.
Then $G[C']$ has at
most two components and, by Lemma~\ref{circuit},
if $G[C']$ has two components then each of the components
is $2$-connected. Thus $G\circ e[C]$
has at most two components. 
So  $G\circ e$ is a framework for $M\con e$.
\end{proof}

The following result is a strengthening
of Lemma~\ref{conn2} for frameworks.
\begin{lemma}\label{conn3}
If $G$ is a framework for a $3$-connected matroid $M$
with $|M|\ge 4$ and $G$ has no isolated vertices,
then either
\begin{itemize}
\item[(a)] $G$ is connected, or 
\item[(b)] $G$ has exactly two components one of which is a 
loop-component.
\end{itemize}
Moreover $M$ has a connected framework.
\end{lemma}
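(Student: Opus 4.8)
The goal is to upgrade Lemma~\ref{conn2} from weak frameworks to frameworks, ruling out outcome~(c), and then to produce a connected framework. The plan is to leverage condition~(4) precisely where weak frameworks fail.

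\medskip

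\noindent\textbf{Approach.} First I would observe that Lemma~\ref{conn2} already applies, since a framework is in particular a weak framework. Thus $G$ satisfies one of outcomes~(a), (b), or~(c) of that lemma. So it suffices to rule out outcome~(c), in which every component of $G$ is a loop-component. The whole point of adding condition~(4) was to exclude exactly this degenerate situation, so the key step is to show that a graph all of whose components are loops cannot satisfy~(4) for a $3$-connected matroid with $|M|\ge 4$.

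\medskip

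\noindent\textbf{Key steps.} Suppose for contradiction that outcome~(c) holds, so $G$ consists of $|M|$ loop-components, one per element. First I would argue that, because $M$ is $3$-connected with $|M|\ge 4$, $M$ has a circuit $C$ with $|C|\ge 3$: indeed a $3$-connected matroid on at least four elements has no loops and no parallel pairs, so its smallest circuit has at least three elements (alternatively, connectivity forces a suitably large circuit). For such a circuit, $G[C]$ consists of $|C|\ge 3$ vertex-disjoint loops, hence has $|C|\ge 3$ components. This directly violates condition~(4), which demands that $G[C]$ have at most two components. This contradiction rules out outcome~(c), leaving only~(a) and~(b).

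\medskip

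\noindent\textbf{Producing a connected framework.} For the final sentence, I would handle the two surviving cases. In case~(a) the framework $G$ is already connected and we are done. In case~(b), $G$ has exactly two components, one of which is a loop-component; let $e$ be the edge in that loop-component. Since $M$ is $3$-connected it is connected, so Lemma~\ref{xx} applies and yields a connected weak framework for $M$. To promote this to a connected \emph{framework}, I would invoke Lemma~\ref{circuit3}: a connected weak framework for a matroid with at least four elements automatically satisfies~(4), hence is a framework. Thus $M$ has a connected framework in either case.

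\medskip

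\noindent\textbf{Main obstacle.} The delicate point is guaranteeing that a sufficiently large circuit exists to trigger the violation of~(4) in outcome~(c). The cleanest route is to use that a $3$-connected matroid with $|M|\ge 4$ has no loops and no parallel elements, so any circuit has at least three elements, and then to exhibit an actual circuit of $M$ (every matroid that is not free has one, and a $3$-connected matroid on four or more elements is certainly not free). Once a circuit $C$ with $|C|\ge 3$ is in hand, the contradiction with~(4) is immediate, and the remainder is routine bookkeeping using the cited lemmas.
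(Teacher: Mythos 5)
Your proposal is correct and follows essentially the same route as the paper: apply Lemma~\ref{conn2}, rule out the all-loop-components outcome by noting that a $3$-connected matroid with $|M|\ge 4$ is simple and therefore has a circuit of length at least $3$, which would violate condition~(4), and then obtain a connected framework via Lemma~\ref{xx} followed by Lemma~\ref{circuit3}. The only difference is that you spell out the existence of a circuit (no coloops, $M$ not free), a detail the paper leaves implicit.
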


\begin{proof}
By Lemma~\ref{conn2}, if $G$ does not satisfy 
(a) or (b), then each component of $G$ is a
loop-component. Since $M$ is $3$-connected, and therefore
simple, $M$ has a circuit of length at least $3$.
However any such circuit violates (4).
Hence $G$ indeed satisfies (a) or (b).

If $G$ is itself not connected, then $G$ satisfies (b).
Then, by Lemma~\ref{xx}, $M$ has a connected
weak framework $G'$. By Lemma~\ref{circuit3},
$G'$ is a connected framework for $M$. 
\end{proof}

Note that Theorem~\ref{certify}
follows directly from Lemmas~\ref{conn3} and~\ref{circuit3}.

\section{Frame matroids}
\label{sec:frames}

A {\em simple framed matroid} is a framed matroid
$(M,V)$ with $M$ simple.
The {\em support graph} of a simple framed matroid $(M,V)$ is the 
graph $G=(V,E(M))$ such that, for  each $v\in V$,
the edge $v$ is a loop-edge at the vertex $v$, and,
for each $e\in E(M)-V$,
the edge $e$ has ends $u$ and $v$ where
$\{e,u,v\}$ is the unique circuit of $M$ in $V\cup\{e\}$.

\begin{lemma}\label{frame1}
If $G$ is the support graph of a simple framed matroid
$(M,V)$, then $G$ is a framework for $M$ and for each circuit
$C$ of $M$, the subgraph $G[C]$ is connected.
\end{lemma}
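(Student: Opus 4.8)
The plan is to first verify that $G$ is a weak framework for $M$, then invoke Lemma~\ref{circuit} to determine the structure of $G[C]$ for each circuit $C$, and finally to rule out the disconnected outcome using the frame structure; connectivity of every $G[C]$ immediately yields condition $(4)$ and hence that $G$ is a framework.

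First I would record the consequences of simplicity. Since $M$ is simple, no element is a loop or is parallel to a frame element, so each $e\in E(M)\setminus V$ is spanned by a set $\{u,v\}\subseteq V$ with $u\neq v$, and $\{e,u,v\}$ is the fundamental circuit of $e$ with respect to $V$. Thus in $G$ the loop-edges are exactly the frame elements $V$ (so $\loops_G(v)=\{v\}$) and each non-frame edge $e$ joins the two distinct vertices of its fundamental circuit. Condition $(1)$ is immediate, and since every edge of a component $H$ lies in $\cl_M(V(H))$ with $V(H)\subseteq V$ independent, we get $r_M(E(H))\le|V(H)|$, which is condition $(2)$.

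The only real computation in the weak-framework part is condition $(3)$. I would show that $\cl_M(E(G-v))=\cl_M(V\setminus\{v\})$, a hyperplane of $M$: every edge of $G-v$ is spanned by $V\setminus\{v\}$ (loop-edges $w\neq v$ lie in $V\setminus\{v\}$, and non-loop edges avoiding $v$ are spanned by their two endpoints in $V\setminus\{v\}$). Then I would check that this hyperplane meets $E(M)$ only in $E(G-v)\cup\{v\}$: a frame element in it cannot be $v$ (as $V$ is independent), and a non-frame element $x$ whose fundamental triangle $\{x,v,b\}$ contains $v$ cannot lie in $\cl_M(V\setminus\{v\})$, since otherwise $v\in\cl_M(\{x,b\})\subseteq\cl_M(V\setminus\{v\})$, contradicting independence of $V$. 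This establishes condition $(3)$, so $G$ is a weak framework and Lemma~\ref{circuit} applies.

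Now fix a circuit $C$ of $M$. By Lemma~\ref{circuit}, $G[C]$ is a balanced cycle, or a connected graph with $|C|=|V(C)|+1$, or a disjoint union of non-balanced cycles; the first two outcomes are connected, so it remains to exclude the third when it has at least two components. This exclusion is the main obstacle and the only place the frame structure is essential. Here I would use that for each non-balanced cycle $D_i$ (including loop-edges) the set $E(D_i)$ is independent with $|E(D_i)|=|V(D_i)|$, whence $\cl_M(E(D_i))=\cl_M(V(D_i))$. Applying the identity $r_M(X\cup Y)=r_M(\cl_M(X)\cup\cl_M(Y))$ repeatedly gives
\[
r_M(C)=r_M\Big(\bigcup_i \cl_M(V(D_i))\Big)\ge r_M\Big(\bigcup_i V(D_i)\Big)=|V(C)|,
\]
since the sets $V(D_i)$ are disjoint subsets of the basis $V$. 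But $C$ is a circuit and $|C|=|V(C)|$ in this outcome, so $r_M(C)=|C|-1=|V(C)|-1$, a contradiction. Hence $G[C]$ is connected; in particular it has at most two components, so condition $(4)$ holds and $G$ is a framework for $M$.
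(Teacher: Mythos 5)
Your proposal is correct and uses essentially the same approach as the paper: both establish the weak-framework conditions by identifying $\cl_M(E(G-v))$ with the hyperplane spanned by $V\setminus\{v\}$, and both derive connectivity of $G[C]$ from the fact that distinct components of a subgraph of $G$ span disjoint subsets of the basis $V$, so their ranks add, which is incompatible with $C$ being a circuit. The only organizational difference is that you reach this contradiction by first invoking the case analysis of Lemma~\ref{circuit} to reduce to vertex-disjoint non-balanced cycles, whereas the paper applies the rank-additivity observation directly to an arbitrary subgraph, making that detour unnecessary.
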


\begin{proof}
By construction $E(G)=E(M)$ and, since $V$ is a basis of $M$,
for each component $H$ of $G$ we have $r(E(H))=|V(H)|$.
Moreover, for each vertex $v$ of $G$, the hyperplane
of $M$ spanned by $V-\{v\}$ is $E(G-v)$.
Hence $G$ is a weak framework for $M$.
Finally, if $H$ is a subgraph of $G$, then
$r_M(E(H))$ is the sum, taken over all components
$H'$ of $H$, of $r_M(E(H'))$. Therefore,
if $C$ is a circuit of $M$, then $G[C]$ is connected.
\end{proof}

We can now prove that
every frame matroid is quasi-graphic.
\begin{proof}[Proof of Theorem~\ref{main}.]
Let $M$ be a frame matroid.  
Recall that the class of quasi-graphic matroids is closed under taking 
minors, so we may assume that $M$ has a basis
$V$ such that $(M,V)$ is a framed matroid.
Moreover, $M$ is quasi-graphic if and only if its simplification is,
so we may assume that $M$ is simple. 
Now it follows from Lemma~\ref{frame1} that $M$ is
quasi-graphic.
\end{proof}

Next we characterize
frame matroids using frameworks. These results are due
to Zaslavsky~[\ref{zaslavsky}, \ref{zaslavsky2}] but we include
proofs for completeness since they play a central
role in this paper.

Let $G$ be a graph and let $\cB$ be a subset of the cycles
of $G$. We say that $\cB$ satisfies the {\em theta-property}
if there is no theta in $G$ with exactly two of its three cycles
in $\cB$. The following result is contained in [\ref{zaslavsky}, Theorem 2.1].
\begin{theorem}\label{zas1}
Let $G$ be a graph and let $\cB$ be a collection of cycles
in $G$ that satisfy the theta-property.  Now let
$\cI$ denote the collection of all sets $I\subseteq E(G)$ such that
there is no $C\in \cB$ with
$E(C)\subseteq I$ and $|E(H)|\le |V(H)|$ for each component
$H$ of $G[I]$.
Then $\cI$ is the collection of independent sets of a matroid with 
ground set $E(G)$.
\end{theorem}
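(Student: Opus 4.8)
The plan is to verify the three matroid independence axioms directly for the collection $\cI$, treating $\cI$ as the family of \emph{biased forests}: edge-sets containing no cycle of $\cB$ and having at most one independent cycle (from $\cB^c$) per component. First I would reformulate the condition. A set $I$ lies in $\cI$ precisely when $G[I]$ contains no cycle in $\cB$ and every component $H$ of $G[I]$ satisfies $|E(H)|\le|V(H)|$; since a connected graph with $|E|\le|V|$ is either a tree or a tree plus one extra edge, each component is either a forest component or contains exactly one cycle, and that cycle must lie outside $\cB$. So the members of $\cI$ are exactly the subgraphs whose components are trees or ``unbalanced unicyclic'' graphs. The hereditary axiom is immediate: deleting an edge from such a subgraph cannot create a cycle of $\cB$ and cannot increase any component's edge-surplus, so every subset of a member of $\cI$ is in $\cI$, and $\emptyset\in\cI$.

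The real work is the exchange axiom. Suppose $I,J\in\cI$ with $|I|<|J|$; I must find $e\in J\setminus I$ with $I\cup\{e\}\in\cI$. The natural invariant to track is the quantity $|V(H)|-|E(H)|$ summed over components, i.e. the number of tree-components, since each component contributes $1$ if it is a tree and $0$ if it is unbalanced-unicyclic. If $k(I)$ denotes the number of components of $G[I]$ that are trees, a short count gives $|I|=|V(I)|-k(I)+(\text{number of unicyclic components})$, and more usefully $|V(I)|-|I|=(\text{number of tree-components})$. Comparing $|I|<|J|$ through these vertex/edge balances should show that $J$ has ``more capacity'' in some component of $G[I]$. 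The plan is: for each edge $e\in J\setminus I$, adding $e$ to $I$ fails to stay in $\cI$ for exactly one of two reasons --- either $e$ joins two vertices in a single unbalanced-unicyclic component of $G[I]$ (creating a second cycle, hence a component with $|E|>|V|$), or $e$ closes a cycle lying in $\cB$. I would argue that if \emph{every} $e\in J\setminus I$ failed, then $J$ could be ``charged'' entirely against the structure of $I$ in a way forcing $|J|\le|I|$, a contradiction.

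The main obstacle will be handling the $\cB$-cycles, where the theta-property is essential; this is where I expect to spend the most care. Suppose $e\in J\setminus I$ joins two distinct tree-components of $G[I]$, so no cycle at all is created --- then $I\cup\{e\}\in\cI$ and we are done. Hence every bad $e$ has both endpoints in a single component of $G[I]$. If adding $e$ creates a cycle in $\cB$, I want to use the theta-property to find an alternative. The idea: since $e$ lies on a cycle $C'$ of $J$ (or $e$ together with the tree-path in $I$ forms a cycle), and $J\in\cI$, the cycles of $J$ are not in $\cB$; I would construct a theta from the $\cB$-cycle closed in $I\cup\{e\}$ together with an unbalanced cycle of $J$, and invoke the theta-property to derive a contradiction or to exchange to a good edge. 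Concretely, the plan is to choose $e\in J\setminus I$ minimizing the length of the path it closes in $G[I]$, and show that if this closes a $\cB$-cycle $C_1$, then because $J$ is biased one can find a second cycle making a theta whose third cycle lies in $\cB$, contradicting the theta-property. Establishing that such a theta always exists --- and that at least one $e$ avoids both failure modes whenever $|I|<|J|$ --- is the crux; once the theta-property is correctly leveraged there, the counting from the preceding paragraph closes the argument.
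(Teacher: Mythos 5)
Your plan is not a proof: you explicitly defer the augmentation step --- the only hard part --- and the two concrete ideas you offer for it are flawed. First, your case analysis of the ``bad'' edges is wrong: an edge $e\in J\setminus I$ whose ends lie in two \emph{distinct} unbalanced-unicyclic components of $G[I]$ is also bad (the merged component has $|E|>|V|$), yet it neither has both ends in a single component of $G[I]$ nor closes any cycle; so your claim that ``every bad $e$ has both endpoints in a single component of $G[I]$'' is false, and the charging scheme as described does not account for these edges. (Symmetrically, your ``good'' case should include edges joining a tree component to a unicyclic one, not only tree to tree.) Second, the theta you propose to build does not materialize: the balanced cycle that $e$ closes in $I\cup\{e\}$ and a cycle of $G[J]$ through $e$ (which need not even exist) share the edge $e$, and two cycles sharing an edge do not in general form a theta; worse, even when they do, that theta has one balanced and one unbalanced cycle, and the theta-property then merely forces its third cycle to be unbalanced --- no contradiction. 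A contradiction only arises from a theta with \emph{two} balanced cycles, and nothing in your construction produces one.

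What is actually missing is an induction. Your counting can be closed as follows: if no edge of $J\setminus I$ can be added, then every edge of $J$ has both ends inside a single tree component $T$ of $G[I]$ or inside the union of the unicyclic components; the unicyclic part absorbs at most $\sum_j |V(U_j)|$ edges of $J$, and for each tree component $T$ one must show it absorbs at most $|V(T)|-1$ edges of $J$, i.e.\ that the edges of $J$ inside $T$ cannot form only unbalanced-unicyclic components. Since each edge of $J\setminus I$ inside $T$ closes a \emph{balanced} fundamental cycle with respect to $T$, this requires the lemma that, under the theta-property, if all fundamental cycles with respect to a spanning tree are balanced then every cycle of the resulting subgraph is balanced --- itself a nontrivial induction in which the theta-property is applied repeatedly, not once. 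You neither state nor prove this lemma, and without it the crux you yourself flag (``establishing that such a theta always exists'') remains open. By contrast, the paper sidesteps augmentation entirely: it verifies the axiomatization that a hereditary family is the independence system of a matroid if and only if $I\cup\{e\}$ contains at most one minimal dependent set; there, two distinct minimal dependent sets in $I\cup\{e\}$ are forced to be balanced cycles whose union is a theta, and the theta-property immediately yields a balanced cycle inside $I$ --- a one-step contradiction. Your route can be repaired, but only by adding the fundamental-cycle lemma and the corrected case analysis above.
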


\begin{proof}
To prove that $(E(G),\cI)$ is a matroid it suffices to check
the following conditions, which are effectively a reformulation
of the circuit axioms in terms of independent sets:
\begin{itemize}
\item[(a)] $\emptyset\in \cI$,
\item[(b)] for each $J\in\cI$ and $I\subseteq J$, we have $I\in\cI$, and
\item[(c)] for each set $I\in \cI$ and $e\in E(G)-I$ either
$I\cup \{e\}\in \cI$ or there is a unique minimal subset
$C$ of $I\cup\{e\}$ that is not in $\cI$.
\end{itemize}
Conditions $(a)$ and $(b)$ follow from the construction.

We call the cycles of $G$ in $\cB$ {\em balanced}.
Let $I\in \cI$ and $e\in E(G)-I$ with $I\cup\{e\}\not\in \cI$.
Let $C_1$ and $C_2$ be minimal subsets of $I\cup\{e\}$ 
that are not in $\cI$. Suppose for a contradiction that 
$C_1\neq C_2$. By definition, for each $i\in\{1,2\}$,
we have $G[C_i-\{e\}]$ is connected, $e\in C_i$,  and either
$G[C_i]$ is a balanced cycle or $|C_i|>|V(C_i)|$.
Consider $J=(C_1\cup C_2)-\{e\}$.
Since $J\subseteq I$, we have $J\in\cI$.
Since $G[C_1-\{e\}]$ and $G[C_2-\{e\}]$ are connected,
$G[J]$ is connected. Therefore $|J|\le |V(J)|$.
It follows that $|C_1|\le |V(C_1)|$ and
$|C_2|\le |V(C_2)|$. Hence $G[C_1]$ and $G[C_2]$ are
balanced cycles. Now $G[J]$ is the union of two paths, each
connecting the ends of $e$, and $|J|\le |V(J)|$, so
$G[C_1\cup C_2]$ is a theta.
By the theta-property, $G[J]$ has a balanced cycle.
However, this contradicts the fact that $J\in\cI$.
\end{proof}

We denote the matroid $(E(G),\cI)$ in Theorem~\ref{zas1} by $FM(G,\cB)$.
The following result is an easy application of [\ref{zaslavsky}, Theorem 2.1].
\begin{theorem}\label{zas2}
If $G$ is a graph and $\cB$ is a collection of cycles
in $G$ that satisfies the theta-property,
then $FM(G,\cB)$ is a frame matroid.
\end{theorem}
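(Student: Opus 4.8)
The plan is to realize $FM(G,\cB)$ as the deletion $M'\del V$ of a framed matroid $(M',V)$ obtained by adjoining to $G$ one unbalanced loop-edge at each vertex. Concretely, I would let $\hat G$ be the graph obtained from $G$ by adding, for each vertex $v\in V(G)$, a new loop-edge $\ell_v$ at $v$, and set $V=\{\ell_v:v\in V(G)\}$. I keep the same collection $\cB$ of balanced cycles, so in particular none of the new loop-edges is balanced. First I would check that $\cB$ still satisfies the theta-property in $\hat G$: any theta with two cycles in $\cB$ has two of its three cycles lying in $G$, and since the third cycle is the symmetric difference of the other two it also lies in $G$; thus the whole theta lies in $G$, contradicting the theta-property of $\cB$ in $G$. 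Hence Theorem~\ref{zas1} applies and $M':=FM(\hat G,\cB)$ is a matroid on $E(G)\cup V$.

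Next I would verify that $(M',V)$ is a framed matroid, working throughout from the independence criterion of Theorem~\ref{zas1}. The set $V$ is independent, since $\hat G[V]$ is a disjoint union of $|V(G)|$ unbalanced loop-edges, each a component $H$ with $|E(H)|=|V(H)|=1$ and no balanced cycle. It is also maximal: adjoining a balanced loop-edge of $G$ produces a set containing a balanced cycle; adjoining an unbalanced loop-edge $e$ at $v$ produces the two-element circuit $\{e,\ell_v\}$; and adjoining a non-loop edge $e$ with ends $u,v$ produces the circuit $\{e,\ell_u,\ell_v\}$, since the subgraph it induces on $\{u,v\}$ has three edges on two vertices while each of its proper subsets satisfies $|E|\le|V|$ with no balanced cycle. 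Thus $V$ is a basis of $M'$. These same circuits show that every edge of $G$ lies in the closure of at most two elements of $V$: a balanced loop of $G$ is a loop of $M'$, spanned by $\emptyset$; an unbalanced loop at $v$ lies in $\cl_{M'}(\{\ell_v\})$; and a non-loop edge with ends $u,v$ lies in $\cl_{M'}(\{\ell_u,\ell_v\})$. Each element of $V$ is spanned by itself, so $(M',V)$ is a framed matroid.

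Finally, since $\hat G\del V=G$ and every cycle of $\cB$ already lies in $G$, restricting the independence criterion of Theorem~\ref{zas1} to subsets of $E(G)$ shows that a set $I\subseteq E(G)$ is independent in $M'$ exactly when it is independent in $FM(G,\cB)$; hence $M'\del V=FM(G,\cB)$, and $FM(G,\cB)$ is a frame matroid. The only genuinely delicate points are the preservation of the theta-property in $\hat G$ and the identification of the closures above, both of which reduce to the component-wise count $|E(H)|\le|V(H)|$ furnished by Theorem~\ref{zas1}. I expect the main thing to get right to be the verification that $\{e,\ell_u,\ell_v\}$, rather than some proper subset, is the relevant circuit, since it is precisely this fact that pins down that a non-loop edge $e$ with ends $u,v$ is spanned by exactly the two joints $\ell_u$ and $\ell_v$.
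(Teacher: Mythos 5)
Your proposal is correct and is essentially the paper's own proof: the paper likewise forms $G^+$ by adding a loop-edge $e_v$ at each vertex, takes $V=\{e_v : v\in V(G)\}$ as a basis of $M^+=FM(G^+,\cB)$, uses the circuits $\{e_u,e,e_v\}$ and $\{e,e_v\}$ to see that $(M^+,V)$ is framed, and concludes $FM(G,\cB)=M^+\del V$. Your write-up merely fills in the verifications (preservation of the theta-property, maximality of $V$, and that $\{e,\ell_u,\ell_v\}$ is genuinely a circuit) that the paper compresses into ``by the definition of $FM(G^+,\cB)$.''
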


\begin{proof}
Let $G^+$ be obtained from $G$ by adding a loop-edge $e_v$
at each vertex of $v$. 
Since we only added loop-edges, the pair $(G^+,\cB)$ still
satisfies the theta-property. Let $M^+=FM(G^+,\cB^+)$ and
$V=\{e_v\, : \, v\in V(G)\}$. By the definition
of $FM(G^+,\cB^+)$, the set $V$ is a basis of $M^+$.
For each non-loop edge $e$ of $G$ with ends $u$ and $v$,
the set $\{e_u,e,e_v\}$ is a circuit of $M^+$ and
for each loop-edge $e$ of $G$ at $v$,
the set $\{e,e_v\}$ is a circuit of $M^+$.
Therefore $M^+$ is a framed matroid and hence
$FM(G,\cB)$ is a frame matroid.
\end{proof}

The next result follows directly from Lemma~\ref{circuit}
and the definition of $FM(G,\cB)$.
\begin{lemma}\label{frame2}
Let $G$ be a framework for a matroid $M$ and let
$\cB$ denote the set of non-balanced cycles of $(M,G)$.
Then $M=FM(G,\cB)$ if and only if for each circuit 
$C$ of $M$ the subgraph $G[C]$ is connected.
\end{lemma}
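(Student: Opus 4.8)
Lemma~\ref{frame2} asserts an equivalence, so I would prove the two implications separately, leaning on Lemma~\ref{circuit} (the three-outcome description of how a circuit $C$ of $M$ sits inside $G$) and on the definition of $FM(G,\cB)$ via the independent-set axiomatization from Theorem~\ref{zas1}. The key reconciliation point is that the circuits of $FM(G,\cB)$ are exactly the minimal dependent sets described in Theorem~\ref{zas1}: a minimal set $C$ with either $G[C]$ a balanced cycle, or $G[C]$ connected with $|C|=|V(C)|+1$ and no balanced subcycle. Comparing this with Lemma~\ref{circuit}, the two matroids $M$ and $FM(G,\cB)$ agree on precisely the circuits where $G[C]$ is connected; the only possible discrepancy is Lemma~\ref{circuit}'s third outcome, where $G[C]$ is a vertex-disjoint union of two or more non-balanced cycles. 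This third outcome is a circuit of $M$ but is never a circuit of $FM(G,\cB)$, since a disconnected set of that shape is independent in the frame matroid.

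For the forward direction, I would assume $M=FM(G,\cB)$ and argue that the disconnected outcome cannot occur. Since $\cB$ is defined as the non-balanced cycles of $(M,G)$, I would first check that these cycles genuinely satisfy the theta-property, invoking Lemma~\ref{theta}: in any theta-subgraph, if two cycles are non-balanced the third cannot be balanced, which is exactly the theta-property for $\cB$. Given that, $FM(G,\cB)$ is well-defined, and if some circuit $C$ of $M=FM(G,\cB)$ had $G[C]$ disconnected, it would consist of vertex-disjoint non-balanced cycles; but then $E(C)$ contains a smaller dependent set (a single non-balanced cycle is already a circuit of $FM(G,\cB)$), contradicting minimality. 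Hence every circuit of $M$ has $G[C]$ connected.

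For the converse, I would assume that $G[C]$ is connected for every circuit $C$ of $M$ and show $M=FM(G,\cB)$. The cleanest route is to verify that $M$ and $FM(G,\cB)$ have the same circuits. By Lemma~\ref{circuit} together with the connectivity hypothesis, every circuit of $M$ is either a balanced cycle or a connected subgraph with $|C|=|V(C)|+1$ and no balanced cycle; each of these is, by Theorem~\ref{zas1}, a circuit of $FM(G,\cB)$. Conversely, each minimal dependent set of $FM(G,\cB)$ has one of these two forms and, by Lemma~\ref{circuit} and Lemma~\ref{dependentset}, is dependent in $M$; minimality then forces it to be a circuit of $M$. Since the two matroids share a ground set and have identical circuit collections, they are equal.

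The step I expect to be the main obstacle is confirming that the circuits of $FM(G,\cB)$ are described correctly and that they coincide \emph{verbatim} with the connected outcomes of Lemma~\ref{circuit}. The subtlety is that Theorem~\ref{zas1} specifies independence rather than circuits directly, so I must translate its independent-set description into a circuit description and confirm that a minimal dependent set there is forced to be connected with the stated edge-vertex count --- exactly matching the first two bullets of Lemma~\ref{circuit}. Once that dictionary is nailed down, the equivalence falls out by comparing circuit families, with the disconnected third outcome of Lemma~\ref{circuit} serving as the sole obstruction that the connectivity hypothesis rules out.
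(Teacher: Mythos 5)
Your plan --- turn Theorem~\ref{zas1}'s independence condition into a description of the circuits of $FM(G,\cB)$ and match it against the three outcomes of Lemma~\ref{circuit} --- is precisely the paper's intended argument: the paper's own proof is just the remark that the lemma ``follows directly from Lemma~\ref{circuit} and the definition of $FM(G,\cB)$,'' and your converse direction fills that remark in correctly. The trouble is with $\cB$. The statement as printed says $\cB$ is the set of \emph{non-balanced} cycles, and you take this literally, claiming Lemma~\ref{theta} yields the theta-property for this $\cB$ because ``if two cycles are non-balanced the third cannot be balanced.'' Lemma~\ref{theta} says nothing of the sort: it says that if two cycles of a theta are balanced then so is the third, which is the theta-property for the \emph{balanced} cycles. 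For the non-balanced cycles the theta-property genuinely fails --- a theta in a framework can have exactly one balanced cycle (e.g.\ the support graph of a signed-graphic matroid on a theta in which exactly two of the three paths have the same sign product) --- and with the literal reading the lemma itself is false: a cycle of $\cB$ would be a circuit of $FM(G,\cB)$ yet, being non-balanced, is independent in $M$; and if $G$ is a single balanced cycle, every circuit of $M$ is connected while $M\neq FM(G,\emptyset)$. So ``non-balanced'' is a typo for ``balanced,'' which is how the lemma is actually applied in the proofs of Theorem~\ref{zas3} and Theorem~\ref{sufficient2}.

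Read with $\cB$ the balanced cycles, your first and third paragraphs are sound, but your forward direction now breaks: you dismiss a disconnected circuit of $M=FM(G,\cB)$ on the grounds that ``a single non-balanced cycle is already a circuit of $FM(G,\cB)$.'' Under the correct reading that is false: a non-balanced cycle contains no member of $\cB$ and has $|E(C)|=|V(C)|$, so it is \emph{independent} in $FM(G,\cB)$. The correct (and easier) argument is that every circuit of $FM(G,\cB)$ induces a connected subgraph: a dependent set either contains a member of $\cB$ or has a component $H$ with $|E(H)|>|V(H)|$, so in either case a subset lying in a single component is already dependent, and minimality confines a circuit to one component. Equivalently, a disjoint union of two or more non-balanced cycles is independent in $FM(G,\cB)$, so it cannot be a circuit of $M$ when $M=FM(G,\cB)$. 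As written, your proof uses the literal reading of $\cB$ in the forward direction and the corrected reading in the converse, so the two halves are statements about different matroids; fixing the forward half as above makes the whole argument go through.
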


The following result is the main theorem in~[\ref{zaslavsky2}].
\begin{theorem}\label{zas3}
A matroid $M$ is a frame matroid if and only if there
is a graph $G$ and a collection $\cB$ of cycles of $G$
satisfying the theta-property such that $M=FM(G,\cB)$.
\end{theorem}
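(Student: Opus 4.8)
The plan is to prove Theorem~\ref{zas3} by establishing both directions, using the machinery already developed in this section. The ``if'' direction is immediate: if $M = FM(G,\cB)$ for some graph $G$ and collection $\cB$ satisfying the theta-property, then Theorem~\ref{zas2} tells us directly that $M$ is a frame matroid. So the entire content of the proof lies in the ``only if'' direction.

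For the ``only if'' direction, suppose $M$ is a frame matroid. The approach I would take is to produce a graph $G$ and a collection $\cB$ of cycles satisfying the theta-property with $M = FM(G,\cB)$. First I would pass to the simplification: since adding or removing parallel elements and loops corresponds to the same operations on edges of a graph (parallel edges and loop-edges), it suffices to handle the simple case, so I may assume $(M,V)$ is a simple framed matroid with basis $V$. Then I would invoke Lemma~\ref{frame1}, which gives me the support graph $G$ of $(M,V)$; this lemma tells me that $G$ is a framework for $M$ and, crucially, that for each circuit $C$ of $M$ the subgraph $G[C]$ is connected. Now I would let $\cB$ be the set of non-balanced cycles of $(M,G)$. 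With the connectivity of $G[C]$ in hand for every circuit $C$, Lemma~\ref{frame2} applies and yields $M = FM(G,\cB)$.

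The one remaining obligation is to check that $\cB$ satisfies the theta-property, since Theorem~\ref{zas3} requires this as part of the conclusion and Lemma~\ref{frame2} does not supply it. This is where I expect the only real work to be. I would argue via Lemma~\ref{theta}: that lemma states that in any theta-subgraph of a weak framework, if two of the three cycles are balanced then so is the third. Taking the contrapositive in the appropriate form, a theta cannot have exactly two cycles outside $\cB$ (i.e. exactly two non-balanced cycles) while the third is balanced---wait, more precisely, the theta-property forbids exactly two of the three cycles lying in $\cB$. Suppose a theta had exactly two cycles in $\cB$, i.e.\ exactly two non-balanced cycles, with the third cycle balanced; then the two non-balanced cycles together with Lemma~\ref{theta} applied to the two \emph{balanced}-versus-the situation must be translated carefully. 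The clean way is to observe that by Lemma~\ref{theta}, among the three cycles of any theta the balanced ones cannot number exactly two, and dually the non-balanced cannot number exactly two either, since fixing two balanced forces the third balanced. Hence no theta has exactly two cycles in $\cB$, which is precisely the theta-property.

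The main obstacle, then, is purely this verification of the theta-property, and it is not serious because Lemma~\ref{theta} was built for exactly this purpose. Everything else---reducing to the simple case, constructing the support graph, and identifying $M$ with $FM(G,\cB)$---is handled verbatim by the preceding lemmas (\ref{frame1} and~\ref{frame2}), so the proof should be short. The only subtlety to state carefully is the reduction to simple framed matroids, where I must confirm that the passage to the simplification commutes with the frame-matroid and $FM(G,\cB)$ constructions; this is routine since loops and parallel elements correspond to loop-edges and parallel edges, neither of which affects the theta-property or the defining circuits.
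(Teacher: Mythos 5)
Your overall architecture matches the paper's proof exactly: the ``if'' direction via Theorem~\ref{zas2}, and the converse by reducing to a simple framed matroid $(M,V)$, taking its support graph $G$, and combining Lemma~\ref{frame1} with Lemma~\ref{frame2}. The genuine gap lies in your choice of $\cB$ and the ensuing theta-property verification. You set $\cB$ equal to the \emph{non-balanced} cycles of $(M,G)$, following the literal wording of Lemma~\ref{frame2}; the paper instead sets $\cB$ equal to the cycles of $G$ whose edge-sets are dependent in $M$, i.e.\ the \emph{balanced} cycles. The paper's choice is the correct one: by the definition of $FM(G,\cB)$ in Theorem~\ref{zas1}, the members of $\cB$ are exactly the cycles of $G$ that become circuits of $FM(G,\cB)$, so $M=FM(G,\cB)$ can hold only if $\cB$ is the set of balanced cycles of $(M,G)$. (The word ``non-balanced'' in the statement of Lemma~\ref{frame2} is evidently a typo; compare Lemma~\ref{lifts3}, Theorem~\ref{sufficient2}, and the paper's own proof of the present theorem, all of which use balanced cycles.) With your $\cB$, the identity $M=FM(G,\cB)$ fails as soon as $G$ contains any cycle at all: a non-balanced cycle lies in your $\cB$ and hence is dependent in $FM(G,\cB)$ but independent in $M$, and vice versa for a balanced cycle.

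The mistake then propagates into your theta-property check, precisely at the spot where you hesitated. Lemma~\ref{theta} says that a theta in a framework cannot have exactly two \emph{balanced} cycles; the ``dual'' statement you assert---that it cannot have exactly two non-balanced cycles either---does not follow from the lemma and is false in general. A theta with exactly one balanced cycle has exactly two non-balanced ones, and this configuration genuinely occurs in frame matroids: in the frame matroid of a group-labelled theta graph whose three paths carry labels $0$, $0$, $1$, exactly one of the three cycles is balanced. So the set of non-balanced cycles need not satisfy the theta-property, and $FM(G,\cB)$ is then not even defined for your $\cB$. Once $\cB$ is corrected to be the balanced cycles, the theta-property is immediate from Lemma~\ref{theta} and the rest of your argument goes through exactly as in the paper; indeed, your instinct to verify the theta-property explicitly is a point in your favour, since the theorem statement requires it and the paper leaves this step implicit.
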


\begin{proof}
The ``if" direction of the result follows from 
Theorem~\ref{zas2}. For the converse,
since it is straightforward to
add loops and parallel elements, we may assume  that $M$ is simple, and
that  $(M,V)$ is a framed matroid for some basis $V$ of $M$.
Let $G$ be the support graph of $(M,V)$
and let $\cB$ denote the set of cycles $C$ of $G$
such that $E(C)$ is dependent in $M$.
By Lemma~\ref{frame1},
$G$ is a framework for $M$ and, for each circuit $C'$ of $M$,
the subgraph $G[C']$ is connected.
Now, by Lemma~\ref{frame2}, we have $M=FM(G,\cB)$.
\end{proof}

\section{Lifted-graphic matroids}
\label{sec:lifts}

We say that a matroid $M$ is a {\em lift} of a matroid $N$
if there is a matroid $M'$ and an element $e\in E(M')$
such that $M'\del e = M$ and $M'\con e = N$.
The following result implies Theorem~\ref{lifts}.

\begin{theorem}\label{lifts2}
If $G$ is a graph and $M$ is a lift of $M(G)$, then
$G$ is a framework for $M$. Moreover, if $C_1$ and $C_2$
are disjoint cycles in $G$, then $E(C_1\cup C_2)$ is
dependent in $M$.
\end{theorem}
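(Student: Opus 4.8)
The plan is to derive everything from a single rank comparison between $M$ and $M(G)$. By hypothesis there is a matroid $M'$ and an element $e\in E(M')$ with $M'\del e=M$ and $M'\con e=M(G)$; in particular $E(G)=E(M')-\{e\}=E(M)$, which is condition $(1)$. The engine of the argument is the inequality
\[
r_{M(G)}(X)\le r_M(X)\le r_{M(G)}(X)+1\qquad\text{for all }X\subseteq E(G).
\]
Indeed $r_M(X)=r_{M'}(X)$ and $r_{M(G)}(X)=r_{M'}(X\cup\{e\})-r_{M'}(\{e\})$, so
\[
r_M(X)-r_{M(G)}(X)=r_{M'}(\{e\})-\bigl(r_{M'}(X\cup\{e\})-r_{M'}(X)\bigr),
\]
and each of the two marginals on the right lies in $\{0,1\}$.

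Two of the required properties now fall out immediately. For condition $(2)$, if $H$ is a component of $G$ then $r_{M(G)}(E(H))=|V(H)|-1$, so $r_M(E(H))\le|V(H)|$. For the ``Moreover'' statement, if $C_1,C_2$ are vertex-disjoint cycles and $X=E(C_1)\cup E(C_2)$, then $G[X]$ has two components and $|X|=|V(X)|$, so $r_{M(G)}(X)=|V(X)|-2=|X|-2$ and therefore $r_M(X)\le|X|-1<|X|$; thus $E(C_1\cup C_2)$ is dependent in $M$.

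Condition $(3)$ is the technical heart, and the rank inequality alone does not settle it. Fix a vertex $v$; since every edge other than a non-loop edge at $v$ already lies in $E(G-v)\cup\loops_G(v)$, it suffices to show that no non-loop edge $f$ at $v$ lies in $\cl_M(E(G-v))$. In the cycle matroid one checks directly that $f\notin\cl_{M(G)}(E(G-v))$, because any cycle of $G$ through $f$ must use a second edge at $v$; hence $r_{M(G)}(E(G-v)\cup\{f\})=r_{M(G)}(E(G-v))+1$. Now set $\delta_X:=r_M(X)-r_{M(G)}(X)=r_{M'}(\{e\})-\bigl(r_{M'}(X\cup\{e\})-r_{M'}(X)\bigr)$; the bracketed marginal is non-increasing in $X$ by submodularity of $r_{M'}$, so $\delta$ is non-decreasing under inclusion. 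Writing $X=E(G-v)$, this gives
\[
r_M(X\cup\{f\})-r_M(X)=1+\bigl(\delta_{X\cup\{f\}}-\delta_X\bigr)\ge 1,
\]
so $f\notin\cl_M(E(G-v))$ and $(3)$ holds. This monotonicity step -- really just submodularity of $r_{M'}$, together with the need to argue inside $M'$ rather than comparing $M$ and $M(G)$ directly -- is where I expect the only genuine friction.

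With $(1)$--$(3)$ in place $G$ is a weak framework for $M$, so Lemma~\ref{circuit} applies to any circuit $C$ of $M$: either $G[C]$ is connected, or $G[C]$ is a disjoint union of non-balanced cycles. In the connected cases $(4)$ is immediate. In the remaining case, say $G[C]$ consists of $k$ vertex-disjoint cycles, we have $|C|=|V(C)|$ and $r_{M(G)}(C)=|V(C)|-k=|C|-k$, while $r_M(C)=|C|-1$ since $C$ is a circuit. The rank inequality then forces $|C|-1\le(|C|-k)+1$, that is $k\le 2$, so $G[C]$ has at most two components and $(4)$ holds. Hence $G$ is a framework for $M$.
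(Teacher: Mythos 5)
Your proof is correct and follows the same basic strategy as the paper: everything is reduced, via the lift matroid $M'$, to the fact that the rank of any set in $M$ exceeds its rank in $M(G)$ by at most one, and conditions (1), (2) and the ``Moreover'' statement are obtained exactly as in the paper. The two points where you diverge are executional rather than strategic. For condition (3), the paper uses the one-line closure chain $\cl_M(E(G-v)) \subseteq \cl_{M'}(E(G-v)\cup\{e\})\setminus\{e\} = \cl_{M'\con e}(E(G-v)) \subseteq E(G-v)\cup\loops_G(v)$, while you re-derive the same containment $\cl_M(X)\subseteq\cl_{M(G)}(X)$ through submodularity of $r_{M'}$ (the monotonicity of your $\delta$); these are two equivalent formulations of the standard fact that closure in a matroid is contained in closure in any of its quotients, the paper's being shorter and yours more self-contained. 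For condition (4), the paper writes only ``Condition (4) follows'' --- the implicit argument being that a circuit of $M$ whose induced subgraph had three or more components would, by Lemma~\ref{circuit} and the Moreover statement, properly contain a dependent set, contradicting minimality --- whereas your explicit count ($k$ disjoint cycles give $|C|-1 = r_M(C) \le r_{M(G)}(C)+1 = |C|-k+1$, hence $k\le 2$) makes that step fully precise. Nothing is missing; if anything your write-up is more complete than the paper's own.
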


\begin{proof}
Let $e$ be an element of a matroid $M'$ such that
$M'\del e=M$ and $M'\con e = M(G)$.
Thus $E(M)=E(G)$. For each component $H$ of $G$,
$r_{M'\con e}(E(H)) = |V(H)|-1$ so
$r_M(E(H))= r_{M'}(E(H))\le r_{M'\con e}(E(H))+1 = |V(H)|$.
For a vertex $v$ of $G$, we have 
$\cl_M(E(G-v)) \subseteq
\cl_{M'}(E(G-v)\cup\{e\})-\{e\} = \cl_{M'\con e}(E(G-v))
\subseteq E(G-v)\cup \loops_G(v)$.
So $G$ is a weak framework for $M$.

Now consider two disjoint cycles $C_1$ and $C_2$ of $G$
and let $X=G[C_1\cup C_2]$.
So 
$r_{M}(X) = r_{M'}(X) \le r_{M'\con e}(X) + 1
= r_{M(G)}(X) +1 = |X| -1$ and, hence,
$X$ is dependent. Condition (4) follows.
\end{proof}

Next we will give an alternate characterization of 
lifted-graphic matroids using frameworks;  again, these results
are due to Zaslavsky~[\ref{zaslavsky}, \ref{zaslavsky3}], but the proofs
are included here for completeness. 

\begin{theorem}\label{zas4}
Let $G$ be a graph and let $\cB$ be a collection of cycles
in $G$ that satisfy the theta-property.  Now let
$\cI$ denote the collection of all sets $I\subseteq E(G)$ such that
there is no $C\in \cB$ with
$E(C)\subseteq I$ and $G[I]$ contains at most one cycle.
Then $\cI$ is the set of independent sets of a matroid on $E(G)$.
\end{theorem}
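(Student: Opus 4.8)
The plan is to follow the proof of Theorem~\ref{zas1} almost verbatim, verifying the three conditions (a)--(c) stated there, since they are exactly a reformulation of the independent-set axioms. Conditions (a) and (b) are immediate: $\emptyset$ contains no member of $\cB$ and spans no cycle, and both defining properties of membership in $\cI$ (containing no $C\in\cB$, and $G[I]$ having at most one cycle) are manifestly inherited by subsets. So the whole content lies in (c). Suppose $I\in\cI$ and $e\in E(G)-I$ with $I\cup\{e\}\notin\cI$, and let $C_1,C_2$ be minimal subsets of $I\cup\{e\}$ that are not in $\cI$; I must show $C_1=C_2$. Since $C_i-\{e\}\subseteq I$ lies in $\cI$, each $C_i$ contains $e$, so it suffices to derive a contradiction from the assumption $C_1\ne C_2$. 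The key object, exactly as in Theorem~\ref{zas1}, is $J:=(C_1\cup C_2)-\{e\}$: it is a subset of $I$, hence lies in $\cI$, so $G[J]$ has at most one cycle and contains no member of $\cB$; the goal is to contradict this.

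First I would record the possible shapes of a minimal non-member $C$ of $\cI$. Minimality forces $G[C]$ to have no pendant edge, so every edge lies on a cycle, while $C\notin\cI$ means $G[C]$ either contains a balanced cycle or carries at least two independent cycles. A short argument then shows $C$ is one of: a balanced cycle; a theta whose three cycles are all unbalanced; two unbalanced cycles meeting in a single vertex; or two \emph{vertex-disjoint} unbalanced cycles. It is precisely this last, disconnected type that distinguishes lifted-graphic matroids from the frame matroids of Theorem~\ref{zas1}: a path joining two unbalanced cycles is never minimal here, since deleting a path-edge leaves two disjoint unbalanced cycles, which are already dependent. In every case $G[C_i-\{e\}]$ has at most one cycle, and that cycle, if present, is unbalanced.

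The remaining work is a case analysis on the types of $C_1$ and $C_2$, in each case exhibiting inside $G[J]$ either two independent cycles or a balanced cycle. When both $C_i$ are balanced cycles through $e$, the sets $C_1-\{e\}$ and $C_2-\{e\}$ are two distinct paths joining the ends of $e$; together with $e$ they yield a theta two of whose cycles, $C_1$ and $C_2$, lie in $\cB$, so the theta-property forces the third cycle, which lies in $J$, to be balanced as well, contradicting $J\in\cI$. If instead at least one $C_i$ has two independent cycles, I would argue by counting cycles rather than by the theta-property: using that $G[J]$ would have at most one cycle, the unbalanced cycle surviving in $C_i-\{e\}$ is pinned down, and then the second cycle of $C_i$ (which passes through $e$) produces, after deleting $e$, a further cycle in $G[J]$ that must be distinct from it — the alternative collapses, via the fact that circuits are incomparable, to the equality $C_1=C_2$. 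The main obstacle, and the genuine departure from Theorem~\ref{zas1}, is exactly that lift circuits may be disconnected: there $G[C_i-\{e\}]$ is always connected, so Zaslavsky could force both circuits to be balanced cycles in one stroke, whereas here I must carry the case analysis and verify carefully that degenerate overlaps of the two circuits collapse to $C_1=C_2$ before the theta-property or the cycle count can be invoked.
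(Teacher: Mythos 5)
Your skeleton is the same as the paper's: verify (a)--(c), reduce (c) to showing that two minimal non-members $C_1,C_2$ of $\cI$ inside $I\cup\{e\}$ coincide, classify the minimal non-members (your four types are exactly the paper's three, with its ``$2$-edge-connected and $|C|=|V(C)|+1$'' type split into theta and figure-eight), work with $J=(C_1\cup C_2)-\{e\}\in\cI$, and invoke the theta-property when both $C_i$ are balanced cycles. That last case you handle correctly, modulo the check that $C_1\cup C_2$ really is a theta: the two paths $C_i-\{e\}$ may share edges, and one needs the at-most-one-cycle property of $G[J]$ to rule out degenerate unions --- but the paper makes the same gloss, so this is not the issue.

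The genuine gap is in the main case, where some $C_i$ contains two independent cycles. You write that ``the second cycle of $C_i$ (which passes through $e$) produces, after deleting $e$, a further cycle in $G[J]$'' --- but it does not: a cycle through $e$ minus $e$ is a path, so a single circuit never supplies a second cycle of $G[J]$ by itself. A second cycle can only be manufactured by combining the two circuits, and this is precisely the organizing step the paper introduces and you are missing: let $A_i$ be a cycle of $G[C_i]$ through $e$; since $G[J]$ has at most one cycle, either $A_1=A_2$, or $A_1\cup A_2$ is a theta whose third cycle lies in $G[J]$. Each branch then needs its own short argument. If $A_1=A_2$, there is no ``further cycle'' at all, and one concludes instead from the uniqueness of the cycle of $G[J]$ that $B_1=B_2$, whence minimality gives $C_1=E(A_1\cup B_1)=E(A_2\cup B_2)=C_2$. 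If $A_1\cup A_2$ is a theta, the third cycle turns out to be \emph{equal} to the pinned-down cycle $B_i$ (not distinct from it, as you assert), and the contradiction comes from minimality forcing $C_1=E(A_1\cup A_2)$ and then $C_2=C_1$ (or a proper containment $C_2\subsetneq C_1$ in the mixed case, violating minimality). Without the dichotomy ``$A_1=A_2$ or $A_1\cup A_2$ is a theta,'' the cycle-counting you describe has nothing to count, so the case analysis cannot be completed as written.
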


\begin{proof}
As noted in the proof of Theorem~\ref{zas1},
to prove that $(E(G),\cI)$ is a matroid it suffices to check
the following conditions:
\begin{itemize}
\item[(a)] $\emptyset\in \cI$,
\item[(b)] for each $J\in\cI$ and $I\subseteq J$, we have $I\in\cI$, and
\item[(c)] for each set $I\in \cI$ and $e\in E(G)-I$ either
$I\cup \{e\}\in \cI$ or there is a unique minimal subset
$C$ of $I\cup\{e\}$ that is not in $\cI$.
\end{itemize}
Conditions $(a)$ and $(b)$ follow from the construction.

We call cycles of $G$ in $\cB$ {\em balanced}.
Let $I\in \cI$ and $e\in E(G)-I$ with $I\cup\{e\}\not\in \cI$.
Let $C_1$ and $C_2$ be minimal subsets of $I\cup\{e\}$ 
that are not in $\cI$. Suppose for a contradiction that 
$C_1\neq C_2$. By definition,  for each $i\in\{1,2\}$, either
$G[C_i]$ is a balanced cycle,
$G[C_i]$ is the union of two vertex disjoint non-balanced cycles,
or $G[C_i]$ is $2$-edge-connected and $|C_i|=|V(C_i)|+1$.
Consider $J=(C_1\cup C_2)-\{e\}$.
Since $J\subseteq I$, we have $J\in\cI$ so either
$G[J]$ is a forest or $G[J]$ contains a unique cycle.

For each $i\in\{1,2\}$, there is a cycle $A_i$ 
of $G[C_i]$ that contains $e$.
Since $G[J]$ contains at most one cycle,
either $A_1=A_2$ or $A_1\cup A_2$ is a theta.

First suppose that $A_1=A_2$. Since $C_1\neq C_2$, the cycle
$A_1$ is non-balanced. Therefore, for each $i\in\{1,2\}$,
there is a non-balanced cycle $B_i$ in $G[C_i-e]$.
Since $G[J]$ contains a unique cycle $B_1=B_2$.
But then $C_1= E(A_1\cup B_1)$ and $C_2=E(A_2\cup B_2)$,
contradicting the fact that $C_1\neq C_2$.

Now suppose that $A_1\cup A_2$ is a theta, and let $C$ be the
cycle in $(A_1\cup A_2)-e$. Since $J$ is independent,
$C$ is not balanced. By the theta-property and symmetry,
we may assume that $A_1$ is not balanced. Then
there is a non-balanced cycle $B_1$ in $G[C_1-\{e\}]$.
Since $G[J]$ has at most one cycle $C=B_1$.
Therefore $C_1= E(A_1\cup A_2)$ and, hence, 
$A_2$ is non-balanced.  Then
there is a non-balanced cycle $B_2$ in $G[C_2-\{e\}]$.
Since $G[J]$ has at most one cycle $C=B_2$, however,
this contradicts the fact that $C_1\neq C_2$.
\end{proof}

We denote the matroid $(E(G),\cI)$ in Theorem~\ref{zas4}
by $LM(G,\cB)$.
\begin{theorem}\label{zas5}
If $G$ is a graph and $\cB$ is a collection of cycles
in $G$ that satisfies the theta-property,
then $LM(G,\cB)$ is a lift of $M(G)$ and, hence,
$G$ is a framework of $LM(G,\cB)$.
\end{theorem}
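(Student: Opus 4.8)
The plan is to exhibit $LM(G,\cB)$ as a lift of $M(G)$ by realizing the required one-element extension as an $LM$-matroid of an augmented graph. First I would let $G^+$ be the graph obtained from $G$ by adding a single loop-edge $e$ at an arbitrary vertex, retaining the same family $\cB$ of balanced cycles, so that $e$ becomes an \emph{unbalanced} loop. Since no loop-edge can lie on a theta-subgraph, every theta of $G^+$ is already a theta of $G$; as $\cB$ is unchanged, the pair $(G^+,\cB)$ inherits the theta-property from $(G,\cB)$. Theorem~\ref{zas4} then guarantees that $M':=LM(G^+,\cB)$ is a matroid on ground set $E(G)\cup\{e\}$.

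Next I would compute the two single-element minors of $M'$ at $e$. Deletion is immediate: $G^+\del e=G$ with the same balanced cycles, so $M'\del e=LM(G,\cB)$ straight from the definition of independence. For the contraction, a set $I\subseteq E(G)$ is independent in $M'\con e$ exactly when $I\cup\{e\}$ is independent in $M'$. Because $e$ is an unbalanced loop it already forms one unbalanced cycle of $G^+[I\cup\{e\}]$, so the defining requirement that this subgraph contain at most one cycle and no balanced cycle collapses to the single condition that $G[I]$ be a forest. Hence $I$ is independent in $M'\con e$ if and only if it is independent in $M(G)$, giving $M'\con e=M(G)$. Together these identifications show that $LM(G,\cB)=M'\del e$ is a lift of $M(G)=M'\con e$, and the concluding clause then follows instantly from Theorem~\ref{lifts2}, which turns any lift of $M(G)$ into a framework with underlying graph $G$.

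The step I expect to be the only delicate one is bookkeeping about conventions rather than genuine mathematics: I must treat an unbalanced loop-edge as a length-one cycle in the phrase ``at most one cycle,'' so that contracting $e$ really does outlaw every surviving cycle of $G$, and I must confirm the (routine) claim that loop-edges never appear on theta-subgraphs so that the theta-property transfers to $G^+$. If one preferred to avoid the loop convention entirely, the same matroid $M'$ could instead be defined directly by declaring its independent sets to be the members of $\cI$ not containing $e$ together with the sets $I\cup\{e\}$ for which $G[I]$ is a forest, and then verifying the independence axioms by the identical case analysis used in the proof of Theorem~\ref{zas4}; the augmentation route is preferable precisely because it reuses that verification rather than repeating it.
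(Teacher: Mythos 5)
Your proposal is correct and takes essentially the same approach as the paper: both adjoin an unbalanced loop-edge $e$ to form $G^+$, set $M'=LM(G^+,\cB)$ (noting the theta-property is inherited), identify $M'\del e=LM(G,\cB)$ and $M'\con e=M(G)$ by checking that forests of $G$ are exactly the sets that remain independent after contracting $e$, and then invoke Theorem~\ref{lifts2}. Your citation of Theorem~\ref{lifts2} for the final clause is in fact the more precise one, as the paper's proof cites Theorem~\ref{lifts} there, which only yields the existence of some framework rather than that $G$ itself is one.
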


\begin{proof}
Let $G^+$ be obtained from $G$ by adding a loop-edge $e$
at a vertex $v$.
Note that $(G^+,\cB)$ satisfies the theta-property;
let $M^+=LM(G^+,\cB)$.
By the definition
of $LM(G^+,\cB)$, for each cycle $C$ of $G$,
$\{e\}\cup E(C)$ is dependent in $M^+$.
Hence $E(C)$ is a dependent set $M^+\con e$.
Similarly,
by the definition
of $LM(G^+,\cB)$, for each forest $F$ of $G$, the set
$\{e\}\cup E(F)$ is independent in $M^+$ and, hence,
$E(F)$ is independent in $M^+\con e$.
Thus $M^+\con e=M(G)$ and, hence,
$M$ is a lift of $M(G)$. So, by
Theorem~\ref{lifts}, $G$ is a framework for $LM(G,\cB)$.
\end{proof}

The following result is a direct consequence of
Lemma~\ref{circuit} and the definition of $LM(G,\cB)$.
\begin{lemma}\label{lifts3}
Let $G$ be a framework for a matroid $M$ and
let $\cB$ denote the set of balanced cycles of $(M,G)$.
Then $M=LM(G,\cB)$ if and only if for each pair $(C_1,C_2)$
of disjoint cycles of $G$, the set $E(C_1\cup C_2)$ 
is dependent in $M$.
\end{lemma}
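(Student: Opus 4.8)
The plan is to prove the equivalence by comparing the circuit families of $M$ and of $LM(G,\cB)$. First I would record that $\cB$, being the set of balanced cycles of $(M,G)$, satisfies the theta-property by Lemma~\ref{theta}, so that $LM(G,\cB)$ is a well-defined matroid. For the ``only if'' direction, suppose $M=LM(G,\cB)$ and let $C_1,C_2$ be disjoint cycles of $G$. Then $G[E(C_1)\cup E(C_2)]$ contains two cycles, so by the definition of the independent sets of $LM(G,\cB)$ the set $E(C_1\cup C_2)$ is dependent in $LM(G,\cB)=M$; alternatively this is exactly the ``moreover'' clause of Theorem~\ref{lifts2} applied via Theorem~\ref{zas5}.

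For the ``if'' direction the core of the plan is to read off both circuit families and show they coincide. On the one hand, Lemma~\ref{circuit} together with condition $(4)$ shows that for every circuit $C$ of $M$ the graph $G[C]$ is one of: a balanced cycle; two vertex-disjoint non-balanced cycles (condition $(4)$ forces the ``collection of vertex-disjoint non-balanced cycles'' to have exactly two components); or a connected graph of minimum degree at least two with $|C|=|V(C)|+1$ and no balanced cycle. On the other hand, the definition of $LM(G,\cB)$ (as analysed in the proof of Theorem~\ref{zas4}) gives that the circuits of $LM(G,\cB)$ are exactly the sets $C$ for which $G[C]$ is a balanced cycle, two vertex-disjoint non-balanced cycles, or a $2$-edge-connected graph with $|C|=|V(C)|+1$ containing no balanced cycle.

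The hard part will be that these two descriptions of the connected case do not literally match: the first only asks for ``connected with minimum degree at least two,'' while the second demands ``$2$-edge-connected''. This gap is precisely where the hypothesis must be used. I would argue that a circuit $C$ of $M$ of the connected type cannot fail to be $2$-edge-connected: if $G[C]$ were connected of minimum degree at least two, with $|C|=|V(C)|+1$, yet had a bridge, then a short graph-theoretic argument (removing the bridge splits $G[C]$ into two components, each of minimum degree forcing exactly one cycle) shows that $G[C]$ contains two vertex-disjoint cycles $D_1,D_2$ with $E(D_1)\cup E(D_2)$ a \emph{proper} subset of $C$. By hypothesis $E(D_1\cup D_2)$ is dependent in $M$, contradicting the minimality of the circuit $C$. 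Hence every circuit of $M$ has $G[C]$ of one of the three shapes characterizing circuits of $LM(G,\cB)$, so every circuit of $M$ is a circuit of $LM(G,\cB)$.

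Finally I would establish the reverse inclusion, for which only dependence (not minimality) is needed. Each circuit $C$ of $LM(G,\cB)$ is dependent in $M$: this holds by the definition of ``balanced'' when $G[C]$ is a balanced cycle, by hypothesis when $G[C]$ is two vertex-disjoint non-balanced cycles, and by Lemma~\ref{dependentset} (since $|C|=|V(C)|+1>|V(C)|$) in the $2$-edge-connected case. Thus $C$ contains a circuit $C'$ of $M$, which by the previous paragraph is itself a circuit of $LM(G,\cB)$ with $C'\subseteq C$; since the circuits of a matroid form an antichain, $C'=C$, so $C$ is a circuit of $M$. Therefore $M$ and $LM(G,\cB)$ have the same circuits and hence are equal, which completes the proof.
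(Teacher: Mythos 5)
Your proof is correct and takes essentially the paper's approach: the paper treats this lemma as ``a direct consequence of Lemma~\ref{circuit} and the definition of $LM(G,\cB)$,'' and your argument is exactly that consequence written out in full --- matching the circuit shapes supplied by Lemma~\ref{circuit} together with condition~(4) against the circuits of $LM(G,\cB)$, and using the disjoint-cycles hypothesis to rule out bridged (loose-handcuff) circuits of $M$. Your bridge argument correctly supplies the one nontrivial detail (reconciling ``connected with minimum degree two'' with ``$2$-edge-connected'') that the paper leaves implicit.
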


The following result, which is a converse to Theorem~\ref{zas5},
is proved in [\ref{zaslavsky3}, Section 3].
\begin{theorem}\label{zas6}
If $G$ is a graph, $M$ is a lift of $M(G)$, and
$\cB$ is the set of balanced cycles of $(M,G)$,
then $M=LM(G,\cB)$.
\end{theorem}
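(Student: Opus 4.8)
The plan is to derive Theorem~\ref{zas6} directly from Theorem~\ref{lifts2} and Lemma~\ref{lifts3}; the substance of the argument lies entirely in checking that the hypotheses of Lemma~\ref{lifts3} hold, so almost all the work has already been done in the earlier results.

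First I would apply Theorem~\ref{lifts2}: because $M$ is a lift of $M(G)$, that theorem tells us $G$ is a framework for $M$, which is exactly the standing hypothesis of Lemma~\ref{lifts3}. Before invoking that lemma I would confirm that $LM(G,\cB)$ is well-defined, which requires the collection $\cB$ of balanced cycles of $(M,G)$ to satisfy the theta-property. This is supplied by Lemma~\ref{theta}: since a framework is in particular a weak framework, in any theta-subgraph of $G$ two balanced cycles force the third to be balanced, so no theta has exactly two balanced cycles. Hence $\cB$ satisfies the theta-property and $LM(G,\cB)$ is defined.

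Next I would extract the remaining hypothesis of Lemma~\ref{lifts3}, namely that $E(C_1\cup C_2)$ is dependent in $M$ for every pair of disjoint cycles $C_1,C_2$ of $G$; this is precisely the \emph{moreover} clause of Theorem~\ref{lifts2}. With both hypotheses of Lemma~\ref{lifts3} verified, and with the $\cB$ in the statement being by definition the set of balanced cycles of $(M,G)$, Lemma~\ref{lifts3} yields $M=LM(G,\cB)$. I do not expect a genuine obstacle: every ingredient is furnished by earlier results, and the only points demanding a moment's attention are that the meaning of \emph{balanced} in Lemma~\ref{lifts3} agrees with the definition of $\cB$ in the theorem and that the theta-property (needed merely for $LM(G,\cB)$ to be defined) is in place, both of which are immediate from the definitions and from Lemma~\ref{theta}.
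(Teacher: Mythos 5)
Your proposal is correct and follows exactly the paper's own proof, which likewise deduces the theorem immediately from Theorem~\ref{lifts2} (giving both that $G$ is a framework for $M$ and the dependence of $E(C_1\cup C_2)$ for disjoint cycles) together with Lemma~\ref{lifts3}. Your additional check that $\cB$ satisfies the theta-property via Lemma~\ref{theta}, so that $LM(G,\cB)$ is well-defined, is a careful touch the paper leaves implicit, but it does not change the argument.
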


\begin{proof}
The result follows immediately from 
Lemmas~\ref{lifts2} and~\ref{lifts3}.
\end{proof}

\section{Frameworks with a loop-edge}

In this section we prove Theorem~\ref{sufficient} which is an
immediate consequence of the following two results.
\begin{theorem}\label{sufficient1}
Let $G$ be a framework for a $3$-connected matroid $M$,
let $\cB$ be the set of balanced cycles of $G$,
and let $e$ be a non-balanced loop-edge at a vertex $v$.
If $e\in\cl_{M}(E(G-v))$, then $M=LM(G,\cB)$.
\end{theorem}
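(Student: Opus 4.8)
The plan is to verify the hypothesis of Lemma~\ref{lifts3}: I will show that for every pair $(C_1,C_2)$ of vertex-disjoint cycles of $G$ the set $E(C_1\cup C_2)$ is dependent in $M$, which gives $M=LM(G,\cB)$. First I would carry out some reductions (we may assume $|M|\ge 4$, the smaller cases being routine). Isolated vertices of $G$ affect neither the framework conditions nor the matroid $LM(G,\cB)$, so I may assume $G$ has none; then by Lemma~\ref{conn3} either $G$ is connected, or $G$ has exactly two components, one of which is a loop-component. In the latter case, since $M$ is loopless and connected, the loop-edge $e_0$ of the loop-component is non-balanced and lies in $\cl_M(E(B))$, where $B$ is the other (connected) component; as the conclusion $M=LM(G,\cB)$ does not refer to $e$, I may replace $e$ by $e_0$ and take $v$ to be its vertex, so that $G-v=B$ is connected. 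Thus I may assume $G$ is connected, hence $2$-connected by Lemma~\ref{conn4}, and so $G-v$ is connected. Finally, if one of $C_1,C_2$ is balanced then its edge-set is already a circuit and we are done, so I assume both are non-balanced; since $C_1,C_2$ are vertex-disjoint, at most one of them passes through $v$.

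The crux is a single fact: \emph{for every non-balanced cycle $C$ of $G$ we have $e\in\cl_M(E(C))$.} For a cycle $C$ with $v\notin V(C)$ I argue as follows. Put $N=M\,|\,(E(G-v)\cup\{e\})$. By Lemma~\ref{restriction}, $G[E(G-v)\cup\{e\}]$ is a weak framework for $N$; as $e$ is its only edge at $v$, contracting the loop $e$ via Lemma~\ref{contractloop} shows that $G-v$ is a weak framework for $N\con e$. Since $e\in\cl_M(E(G-v))$ we have $r(N\con e)=r_M(E(G-v))-1$, and condition~$(2)$ gives $r_M(E(G-v))\le |V(G-v)|$; because $G-v$ is connected, Theorem~\ref{strong-graphic} yields $N\con e=M(G-v)$. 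Now $C$ is a cycle of $G-v$, so $E(C)$ is a circuit of $M(G-v)=N\con e$, while $E(C)$ is independent in $N$ (it is non-balanced in $M$). Hence $r_N(E(C)\cup\{e\})=r_{N\con e}(E(C))+1=|E(C)|=r_N(E(C))$, so $e\in\cl_N(E(C))\subseteq\cl_M(E(C))$. For a non-balanced cycle $C$ with $v\in V(C)$ the fact is immediate: the subgraph $G[E(C)\cup\{e\}]$ has $|V(C)|$ vertices and $|V(C)|+1$ edges, so by Lemma~\ref{dependentset} its edge-set is dependent, and since $E(C)$ is independent this forces $e\in\cl_M(E(C))$.

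With the key fact in hand, let $C_1,C_2$ be disjoint non-balanced cycles. For each $i$, since $E(C_i)$ is independent and $e\in\cl_M(E(C_i))$, there is a (fundamental) circuit $D_i$ with $e\in D_i\subseteq\{e\}\cup E(C_i)$, and $D_i-\{e\}\subseteq E(C_i)$ is non-empty. As $D_1-\{e\}$ and $D_2-\{e\}$ are disjoint, $D_1\neq D_2$; choosing $f\in D_1-\{e\}$ and applying the strong circuit-exchange axiom to eliminate $e$ produces a circuit $D_3$ with $f\in D_3\subseteq(D_1\cup D_2)-\{e\}\subseteq E(C_1)\cup E(C_2)$. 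Thus $E(C_1\cup C_2)$ is dependent, and Lemma~\ref{lifts3} gives $M=LM(G,\cB)$.

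I expect the main obstacle to be the identity $N\con e=M(G-v)$. This is exactly the step that turns the hypothesis $e\in\cl_M(E(G-v))$ into usable structure: restricting to $E(G-v)\cup\{e\}$ and contracting $e$ collapses $M$ to a graphic matroid, which is what certifies that $e$ behaves as a genuine lift-edge (rather than as a frame/vertex element, the alternative that would lead to the frame-matroid conclusion). The connectivity of $G-v$ is indispensable in applying Theorem~\ref{strong-graphic} there, which is precisely why the reduction to a connected, hence $2$-connected, framework (Lemma~\ref{conn4}) must precede the argument; the separate disposal of the loop-component case via Lemma~\ref{conn3} is the fiddliest bookkeeping.
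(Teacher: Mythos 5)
Your proof is correct in substance and funnels through the same endgame as the paper (verify the hypothesis of Lemma~\ref{lifts3}, finishing with circuit elimination), but your proof of the central claim is genuinely different. For a non-balanced cycle $C$ with $v\notin V(C)$, the paper proves the stronger statement that $E(C)\cup\{e\}$ is a \emph{circuit} of $M$, by a local argument: take a minimal path $P$ from $v$ to $V(C)$, note that $\{e\}\cup E(P\cup C)$ is dependent by Lemma~\ref{dependentset}, use condition (3) together with $e\in\cl_M(E(G-v))$ to exhibit a cocircuit meeting this set in exactly one edge of $P$ (so it cannot be a circuit), and then read off from the classification in Lemma~\ref{circuit} that the circuit it contains must be $\{e\}\cup E(C)$. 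You instead prove $e\in\cl_M(E(C))$ globally, by restricting to $E(G-v)\cup\{e\}$, contracting $e$, and identifying the contraction with $M(G-v)$ via Theorem~\ref{strong-graphic}. Your route makes the lift structure explicit -- it literally exhibits the graphic quotient that certifies $e$ as a lift element, and it treats all cycles of $G-v$ at once -- but it needs $G-v$ connected, which forces the extra appeal to Lemma~\ref{conn4}; the paper's path argument needs only $G$ connected. Your handling of the disconnected case (swapping $e$ for the loop-edge of the loop-component, rather than invoking Lemma~\ref{xx}) is also a legitimate variant, though your summary sentence ``Thus I may assume $G$ is connected'' is not literally available there; what your argument actually uses, and what you do establish in that case, is only that $G-v$ is connected.

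One small hole: in the final step you form the fundamental circuit $D_i$ of $e$ with respect to the independent set $E(C_i)$, which presupposes $e\notin E(C_i)$. If one of the two disjoint cycles, say $C_1$, is the loop $G[\{e\}]$ itself, then $E(C_1)=\{e\}$ and no such $D_1$ exists (it would force $\{e\}$ to be a circuit, i.e., $e$ a loop of $M$), so the exchange argument never starts; the paper excludes exactly this case explicitly. The repair is one line inside your own framework: in that case $v\notin V(C_2)$, so your key fact gives $e\in\cl_M(E(C_2))$, and hence $E(C_1\cup C_2)=\{e\}\cup E(C_2)$ is dependent outright, no exchange needed. With that patch the proof is complete.
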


\begin{proof}
By Lemmas~\ref{conn3} and~\ref{xx}, we may assume that $G$ is connected.

We will start by proving, for any non-balanced 
cycle $C$ of $G$, that $E(C)\cup\{e\}$ is a circuit of $M$.
By Lemmas~\ref{dependentset} and~\ref{circuit} we may assume that
$v\not \in V(C)$.
Let $P$ be a minimal path from $\{v\}$ to $V(C_2)$ and let 
$X=\{e\}\cup E(P\cup C)$.  By Lemma~\ref{dependentset}, $X$
is dependent.
Let $f$ be the edge of $P$ that is incident with $v$.
By $(3)$ and the fact that $e\in\cl_{M}(E(G-v))$,
there is a cocircuit $C^*$ of $M$ such that
$C^*\cap X =\{f\}$.
Therefore $X$ is not a circuit of $M$.
So, by Lemma~\ref{circuit}, $\{e\}\cup E(C)$ is a 
circuit of $M$, as required.

By Lemma~\ref{lifts3}, 
it suffices to prove that if $C_1$ and $C_2$ are vertex-disjoint
cycles of $G$, then $E(C_1\cup C_2)$ is dependent in $M$;
we may assume that $C_1$ and $C_2$ are non-balanced.
By the preceding paragraph
we may assume that neither $C_1$ nor $C_2$ is equal to
$G[\{e\}]$ and both
$E(C_1)\cup \{e\}$ and $E(C_2)\cup \{e\}$ are 
circuits of $M$. So, by the circuit-exchange property,
$E(C_1\cup C_2)$ is dependent, as required.
\end{proof}

\begin{theorem}\label{sufficient2}
Let $G$ be a framework for a $3$-connected matroid $M$ with $|M|\ge 4$,
let $\cB$ be the set of balanced cycles of $G$, and let
$e$ be a loop-edge at a vertex $v$.
If $e\not\in\cl_{M}(E(G-v))$, then $M=FM(G,\cB)$.
\end{theorem}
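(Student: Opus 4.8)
The plan is to use Lemma~\ref{frame2}: since here $\cB$ is the set of balanced cycles, it suffices to prove that $G[C]$ is connected for every circuit $C$ of $M$. By Lemmas~\ref{circuit} and~\ref{circuit3}, the only way $G[C]$ can fail to be connected is that $G[C]$ is a pair of vertex-disjoint non-balanced cycles. Thus the theorem reduces to the single claim: if $C_1$ and $C_2$ are vertex-disjoint non-balanced cycles of $G$, then $E(C_1\cup C_2)$ is independent in $M$. First I would reduce to connected $G$. By Lemma~\ref{conn3}, either $G$ is connected or $G$ has a loop-component, say with loop-edge $f$ at a vertex $u$. In the latter case $e\neq f$ (otherwise $E(G-v)=E(M)\setminus\{e\}$, making $e$ a coloop, contradicting $3$-connectivity), and $f\in\cl_M(E(G-u))$ since $M$ has no coloops; so Theorem~\ref{sufficient1} yields $M=LM(G,\cB)$. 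But $\{e\}$ and $\{f\}$ are vertex-disjoint non-balanced cycles, so by the definition of $LM(G,\cB)$ the set $\{e,f\}$ is dependent, giving $e\in\cl_M(\{f\})\subseteq\cl_M(E(G-v))$ and contradicting the hypothesis. Hence $G$ is connected.

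The crucial consequence of $e\notin\cl_M(E(G-v))$ is the following base fact, which I would use repeatedly: if $C_0$ is a non-balanced cycle with $v\notin V(C_0)$, then $\{e\}\cup E(C_0)$ is independent. Indeed $E(C_0)\subseteq E(G-v)$, so $\cl_M(E(C_0))\subseteq\cl_M(E(G-v))$ does not contain $e$, and adjoining $e$ to the independent set $E(C_0)$ preserves independence.

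Now suppose for contradiction that $C:=E(C_1\cup C_2)$ is a circuit. A short connectivity argument (take a shortest path from $v$ to $C_1$; if it is forced to meet $C_2$, its initial segment already reaches $C_2$ while avoiding $C_1$) shows that, after possibly interchanging $C_1$ and $C_2$, we have $v\notin V(C_2)$ and there is a path $P$ from $v$ to $C_1$ that is internally disjoint from $C_1$ and avoids $V(C_2)$; when $v\in V(C_1)$ we take $P$ trivial. Set $H=\{e\}\cup E(P)\cup E(C_1)$ (so $H=\{e\}\cup E(C_1)$ when $v\in V(C_1)$). The only cycles of $G[H]$ are $\{e\}$ and $C_1$, and since $\{e\}\cup E(C_1)$ is either a tight handcuff (when $v\in V(C_1)$) or independent by the base fact (when $v\notin V(C_1)$), a minimality check using Lemma~\ref{circuit} shows that $H$ is a circuit of $M$. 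By construction $E(C_1)\subseteq H$ while $H\cap E(C_2)=\emptyset$, so $C\cap H=E(C_1)$ and $C\setminus H=E(C_2)$.

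Finally I would apply the strong circuit-elimination axiom to $C$ and $H$: choosing $z\in E(C_1)$ and $g\in E(C_2)$, there is a circuit $C^{\ast}$ of $M$ with $g\in C^{\ast}\subseteq (C\cup H)\setminus\{z\}$. In $G[(C\cup H)\setminus\{z\}]$ the cycle $C_2$ is a separate component, disjoint from $e$, from $P$, and from $C_1$, and the only cycles present are $\{e\}$ and $C_2$, both non-balanced. Running $C^{\ast}$ through the trichotomy of Lemma~\ref{circuit} then forces a contradiction: $C^{\ast}$ cannot be a balanced cycle; it cannot be connected with $|C^{\ast}|=|V(C^{\ast})|+1$, since a connected subgraph meeting $E(C_2)$ lies inside the $C_2$-component, which contains no circuit; and it cannot be a pair of vertex-disjoint non-balanced cycles, since the only such set containing $g$ is $\{e\}\cup E(C_2)$, which is independent by the base fact. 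Hence no such $C^{\ast}$ exists, contradicting circuit-elimination, so $E(C_1\cup C_2)$ is independent and $M=FM(G,\cB)$. I expect the main obstacle to be exactly the placement of the handcuff $H$ so that it contains $E(C_1)$ but avoids $C_2$ (the reachability observation routing $P$ to $C_1$ without meeting $C_2$), together with verifying that the eliminated circuit $C^{\ast}$ is genuinely forced through $E(C_2)$.
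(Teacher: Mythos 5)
Your proposal is correct and follows essentially the same route as the paper's proof: after reducing to connected $G$, you build the handcuff circuit $\{e\}\cup E(P)\cup E(C_1)$ from a minimal path $P$ that reaches $C_1$ while avoiding $C_2$, apply circuit elimination against the supposed circuit $E(C_1\cup C_2)$, and use the trichotomy of Lemma~\ref{circuit} to force the eliminated circuit to be $\{e\}\cup E(C_2)$, which contradicts $e\notin\cl_M(E(G-v))$ (your ``base fact''). The only difference is cosmetic: you dispose of the disconnected case by invoking Theorem~\ref{sufficient1} on the loop-component's edge, whereas the paper uses Lemma~\ref{xx} to get $M\con f=M(G\del f)$ and derives the contradiction from $3$-connectivity directly; both are valid.
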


\begin{proof}
First we consider the case that $G$ is not connected.
By Lemma~\ref{conn3}, $G$ has exactly two components
one of which is a loop-component; let $f$ be the edge
in the loop-component. By Lemma~\ref{xx},
$M\con f = M(G\del f)$. Since $M$ is $3$-connected,
$M\con f$ has no loops and, hence, $f=e$.
However, $e\not\in\cl_{M}(E(G-v))$ and hence $e$ is a coloop of $M$,
contradicting fact that $M$ is $3$-connected. Hence $G$ is connected.
 
Suppose by way of contradiction that $M\neq FM(G,\cB)$.
Then, by (4) and Lemmas~\ref{circuit} and~\ref{frame2},
there exist disjoint
non-balanced cycles $C_1$ and $C_2$ of $(M,G)$
such that $E(C_1\cup C_2)$ is a circuit in $M$.

Since $e\not\in \cl_{M}(E(G-v))$, 
neither $C_1$ nor $C_2$ is equal to $G[\{e\}]$.
Since $G$ is connected, there is a path
from $v$ to $V(C_1\cup C_2)$ in $G$;
let $P$ be a minimal such path.
We may assume that $P$ has an end in $V(C_1)$.
By Lemmas~\ref{dependentset} and~\ref{circuit},
$E(C_1\cup P)\cup \{e\}$ is a circuit of $M$.
Let $f\in E(C_1)$; by the circuit exchange property, there 
exists a circuit $C$ in $(E(C_1\cup C_2\cup P)\cup \{e\})-\{f\}$.
By Lemma~\ref{circuit}, $C= E(C_2)\cup\{e\}$.
However this contradicts the fact that $e\not\in\cl_{M}(E(G-v))$.
\end{proof}

\section{Representable matroids}

A framework $G$ for a matroid $M$ is called {\em strong}
if $G$ is connected and $r_M(E(G-v)) = r(M)-1$
for each vertex $v$ of $G$.
\begin{lemma}\label{strong}
If $M$ is a $3$-connected quasi-graphic matroid with $|M|\ge 4$, then
$M$ has a strong framework.
\end{lemma}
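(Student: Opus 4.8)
The plan is to build a strong framework from an arbitrary connected framework by an extremal argument, choosing the vertex set as large as possible and then showing that each vertex-deletion drops the rank by exactly one.

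First I would invoke Lemma~\ref{conn3} to obtain a connected framework for $M$, and among all connected frameworks I would fix one, $G$, with $|V(G)|$ as large as possible. By Lemma~\ref{conn4}, $G$ is $2$-connected, so $G-v$ is connected and non-empty for every vertex $v$; applying Lemma~\ref{subgraph} to $H=G-v$ gives $r_M(E(G-v))\le r(M)-1$ for every vertex. Thus $G$ is strong precisely when this inequality is tight everywhere. Note that $r(M)\le|V(G)|$ by condition $(2)$, and if $r(M)\le|V(G)|-1$ then Theorem~\ref{graphic} forces $M=M(G)$; since a $2$-connected graph is a strong framework for its own cycle matroid (each $G-v$ is connected on $|V(G)|-1$ vertices, of rank $r(M)-1$), I am done in that case. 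So I may assume $r(M)=|V(G)|$ and must rule out \emph{bad} vertices, meaning those with $r_M(E(G-v))\le r(M)-2$.

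Next I would analyse a bad vertex locally. Since $G-v$ is connected with $r_M(E(G-v))\le|V(G-v)|-1$, Theorem~\ref{graphic} gives $M|E(G-v)=M(G-v)$ and hence $r_M(E(G-v))=r(M)-2$; equivalently $N:=M\con E(G-v)$ is a rank-$2$ matroid on the set of edges of $G$ incident with $v$. By condition $(3)$ every non-loop edge at $v$ is a non-loop of $N$, so $N$ partitions these edges into parallel classes, with any loops of $N$ being loop-edges at $v$. In the principal case, $N$ has exactly two parallel classes $P_1,P_2$, each containing a non-loop edge, and I would split $v$ into two vertices: attach the edges of $P_2$ at a new vertex $v_2$ and all remaining edges at $v$ (those of $P_1$ together with the loops of $N$) at a vertex $v_1$, keeping the other endpoints in $G-v$ fixed. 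The resulting graph $G'$ is connected with $|V(G)|+1$ vertices. Because $P_1,P_2$ are distinct parallel classes of the rank-$2$ matroid $N$, the set $E(G-v)\cup P_i$ spans a hyperplane of $M$ that omits the non-loop edges placed at the other new vertex, and one checks from this that $G'$ satisfies $(1)$--$(3)$; by Lemma~\ref{circuit3} it is then a framework for $M$, contradicting the maximality of $|V(G)|$.

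The main obstacle is the remaining configurations at a bad vertex, where this two-way split is unavailable. If the second rank-direction of $N$ is carried only by a loop-edge $e$ at $v$ with $e\notin\cl_M(E(G-v))$, then Theorem~\ref{sufficient2} identifies $M$ as a frame matroid, whose support graph is strong by the computation behind Lemma~\ref{frame1}. I expect the genuinely delicate case to be when $N$ has three or more parallel classes: then $M\con E(G-v)$ behaves like a rank-one lift of $M(G-v)$ in several directions, and deleting any single vertex of a split leaves two directions and hence full rank, so no split into two or more vertices preserves condition $(3)$. This is exactly the lifted-graphic signature, and the substance of the argument will be to show that such a vertex cannot persist in a vertex-maximal framework — that is, that $M$ then already admits a strong framework arising from its biased-graph description. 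Verifying condition $(3)$ for every candidate reconstruction, and closing this last case without circular appeal to the classification, is where the real work lies.
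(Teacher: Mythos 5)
Your proposal has a genuine gap, and you name it yourself: the case where $N=M\con E(G-v)$ has three or more parallel classes is never closed. What you offer there is a research plan (``the substance of the argument will be to show\dots''), not an argument, and this case is not a fringe one --- it is exactly the case that occurs when $M$ is a lift of $M(G)$ with $G-v$ balanced, so any proof must deal with it. Worse, your own extremal framework cannot close it: as you observe, when $N$ has three or more parallel classes \emph{no} split of $v$ into two vertices preserves condition $(3)$, so vertex-maximality yields no contradiction and the whole strategy stalls. Your other side case is also shaky: if the second parallel class of $N$ consists of a single loop-edge $e$, then $E(M)-\cl_M(E(G-v)\cup P_1)=\{e\}$ would be a cocircuit, making $e$ a coloop of a $3$-connected matroid --- so that case is vacuous; and the justification you give for it (that the support graph coming from Lemma~\ref{frame1} is strong) is wrong anyway, since the support graph of the frame representation produced by Theorem~\ref{sufficient2} is essentially $G$ itself, which by hypothesis has the bad vertex $v$ and so is not strong.

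The paper's proof shows how to avoid the obstruction entirely, by changing both the extremal quantity and the local move. Instead of maximizing $|V(G)|$ and splitting $v$, it takes a connected framework with the maximum number of \emph{loop-edges}, picks a cocircuit $C^*$ disjoint from $E(G-v)$ (choosing $C^*$ to contain a loop-edge if possible, which guarantees that the set $L$ of non-loop-edges at $v$ outside $C^*$ is non-empty), and then replaces each edge $f=vw\in L$ by a loop-edge at $w$. Condition $(3)$ at $v$ survives because $E(G-v)\cup L$ lies inside the hyperplane $E(M)-C^*$; conditions at the other vertices only get easier; connectivity is preserved by Lemma~\ref{conn4}; and Lemma~\ref{circuit3} upgrades the result to a framework. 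In your language, this move detaches one parallel class of $N$ from $v$ and reattaches its edges as loops, and --- unlike splitting --- it works no matter how many parallel classes $N$ has, because the remaining classes all stay at $v$ inside $C^*$. That uniformity is precisely what your case analysis is missing, so your principal-case computation (which is correct, and is a legitimate alternative move when $N$ has exactly two classes each containing a non-loop edge) cannot be completed into a proof without importing an idea of this kind.
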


\begin{proof}
By Lemma~\ref{conn3}, $M$ has a connected framework.
Let $G$ be a connected framework having as many 
loop-edges as possible.
Suppose that $G$ is not a strong framework and
let $v\in V(G)$ such that $r_M(E(G)-v) < r(M)-1$.
Let $C^*$ be a cocircuit of $M$ with 
$C^*\cap E(G-v)=\emptyset$; if possible we choose
$C^*$ so that it contains a loop-edge of $G$.
Since $M$ is $3$-connected, $|C^*|\ge 2$ and, by
Lemma~\ref{dependentset}, there is at most one loop-edge
at $v$. Therefore $C^*$ contains at least one non-loop-edge.
Let $L$ denote the set of non-loop-edges of $G-C^*$ incident with
$v$. By our choice of $C^*$, the set $L$ is non-empty.

Let $H$ be the graph obtained from $G$ by replacing each 
edge $f=vw\in L$ with a loop-edge at $w$. By Lemma~\ref{conn4},
$H$ is connected.  Note that $H$ is framework for $M$. However,
this contradicts our choice of $G$.
\end{proof}

We can now prove our main theorem that,
if $M$ is a $3$-connected representable quasi-graphic matroid, then 
$M$ is either a frame matroid or a lifted-graphic matroid.

\begin{proof}[Proof of Theorem~\ref{main}.]
Let $M=M(A)$, where $A$ is a matrix over a field $\bF$
with linearly independent rows.
We may assume that $|M|\ge 4$. Therefore,
by Lemma~\ref{strong},
$M$ has a strong framework $G$.

\begin{claim}
There is a matrix $B\in \bF^{V(G)\times E(G)}$ 
such that 
\begin{itemize}
\item the row-space of $B$ is contained in the
row-space of $A$, and
\item
for each $v\in V(G)$ and non-loop edge
$e$ of $G$, we have $B[v,e]\neq 0$ if and only if
$v$ is incident with $e$.
\end{itemize}
\end{claim}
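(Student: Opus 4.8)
The plan is to construct the matrix $B$ row by row, indexed by the vertices of $G$, realizing each row inside the row-space of $A$. The starting point is the observation that, by the definition of a strong framework, for each vertex $v$ we have $r_M(E(G-v)) = r(M)-1$, so $E(G-v)$ is spanned by a hyperplane of $M$. Dually, for each $v$ there is a cocircuit $C^*_v$ with $C^*_v \cap E(G-v) = \emptyset$; equivalently, there is a linear functional on the column-space of $A$ (a vector in the row-space of $A$) that vanishes on every column indexed by an edge of $E(G-v)$. By condition $(3)$ of the framework definition, $\cl_M(E(G-v)) \subseteq E(G-v) \cup \loops_G(v)$, so this functional vanishes exactly on the columns of $E(G-v)$ together with the loop-edges at $v$, and is nonzero on every non-loop edge incident with $v$ (since those edges lie outside $\cl_M(E(G-v))$). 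I take the $v$-th row of $B$ to be (a representative of) this functional expressed as a row vector in $\bF^{E(G)}$.

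With this definition the second bullet is nearly immediate: for a non-loop edge $e$, the entry $B[v,e]$ is the value of the $v$-th functional on the column $A_e$, which is zero precisely when $e \in E(G-v)$, i.e.\ when $v$ is \emph{not} an endpoint of $e$, and nonzero exactly when $v$ is incident with $e$. The first bullet also holds by construction, since each row of $B$ is chosen to lie in the row-space of $A$. First I would make the cocircuit/functional correspondence precise: a cocircuit of $M = M(A)$ is the support of a minimal nonzero vector in the row-space of $A$, so choosing such a vector supported away from $E(G-v)$ is exactly choosing a row of $B$ with the desired zero pattern. Then I would verify that the chosen functional is genuinely nonzero on each non-loop edge at $v$, which is where I use that such an edge $e$ satisfies $e \notin \cl_M(E(G-v))$, so no functional vanishing on $E(G-v)$ can also vanish on $e$ unless $e$ is a loop-edge at $v$.

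The step I expect to be the main obstacle is ensuring the \emph{uniform} choice across all vertices, i.e.\ that the rows can be selected simultaneously so that the resulting $B$ has its row-space inside that of $A$ while the entrywise support pattern is controlled globally rather than one vertex at a time. Since each row is independently forced to vanish on $E(G-v)$ and the rank condition $r_M(E(G-v)) = r(M)-1$ guarantees the relevant functional is unique up to scaling, the freedom is only in the choice of scalar for each row, so no genuine compatibility problem arises — but I would need to confirm that the resulting $V(G) \times E(G)$ matrix is well-defined over $\bF$ and that loop-edges cause no ambiguity (a loop-edge at $v$ lies in $\cl_M(E(G-v))$, so it contributes only to the $v$-th row's support and is not constrained by the non-loop condition). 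Once each row is pinned down by its cocircuit, assembling them into $B$ and reading off the two required properties completes the claim.
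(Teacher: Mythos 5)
Your proof is correct and takes essentially the same approach as the paper: for each vertex $v$, the strong-framework condition makes $\cl_M(E(G-v))$ a hyperplane of $M$, and the corresponding row-space functional (the paper phrases it as a row supported on the cocircuit $E(M)-\cl_M(E(G-v))$, unique up to scaling just as you argue) is nonzero precisely off this hyperplane, hence nonzero on exactly the non-loop edges at $v$ by condition $(3)$. One small slip in your parenthetical: a loop-edge at $v$ need \emph{not} lie in $\cl_M(E(G-v))$ (this is exactly the situation of Theorem~\ref{sufficient2}), but this is harmless since the claim places no constraint on the entries at loop-edges.
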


\begin{proof}[Proof of claim.]
Let $v\in V(G)$ and let $C^* = E(M)-\cl_M(E(G-v))$.
By the definition of a strong framework,
$C^*$ is a cocircuit of $M$. 
Since $r(E(M)-C^*)<r(M)$, by applying row-operations
to $A$ we may assume that there is a row $w$ of $A$ whose support
is contained in $C^*$. Since $C^*$ is minimally co-dependent,
the support of row-$w$ is equal to $C^*$. Now
we set the row-$v$ of $B$ equal to the row-$w$ of $A$.
\end{proof} 

Note that $M(B)$ is a frame matroid and $G$ is a 
framework for $M(B)$. We may assume that 
$r(M(A))>r(M(B))$ since otherwise $M(A)$ is a frame matroid.
Since $G$ is a connected framework for both $M(A)$ and $M(B)$,
it follows that $r(M(B)) = |V(G)|-1$ and that 
$r(M(A)) = |V(G)|$. Up to row-operations we may assume that
$A$ is obtained from $B$ by appending a single row.
By Lemma~\ref{graphic}, $M(B)= M(G)$.
Hence $M$ is a lift of $M(G)$.
\end{proof}

\section*{Acknowledgement}
We thank Daryl Funk for pointing out a significant error in an 
earlier version of this paper. We also thank the anonymous
referees for their detailed comments.

\section*{References}

\newcounter{refs}

\begin{list}{[\arabic{refs}]}%
{\usecounter{refs}\setlength{\leftmargin}{10mm}\setlength{\itemsep}{0mm}}

\item \label{oxley}
J. G. Oxley,  {\em Matroid Theory}
Oxford University Press, New York, second edition (2011).

\item \label{rw}
R. Chen, G. Whittle,
On recognizing frame and lifted graphic matroids,
to appear in J. Graph Theory.

\item\label{seymour}
P. D. Seymour,
Recognizing graphic matroids,
Combinatorica {\bf 1} (1981), 75--78.

\item \label{zaslavsky}
T. Zaslavsky,
Biased graphs. II. The three matroids,
J. Combin. Theory Ser. B {\bf 47} (1989),  32-52.

\item \label{zaslavsky2}
T. Zaslavsky,
Frame matroids and biased graphs,
Europ. J. Combinatorics {\bf 15} (1994),  303-307.

\item \label{zaslavsky3}
T. Zaslavsky,
Supersolvable frame-matroids and graphic-lift lattices,
Europ. J. Combinatorics {\bf 15} (2001),  119-133.

\end{list}

\end{document}